\DeclareMathOperator{\im}{im}
\DeclareMathOperator{\GL}{GL}
\DeclareMathOperator{\SL}{SL}
\DeclareMathOperator{\PSL}{PSL}
\DeclareMathOperator{\tr}{Tr}
\DeclareMathOperator{\ord}{ord}
\DeclareMathOperator{\Irrep}{Irrep}
\DeclareMathOperator{\AL}{AL}
\newcommand{\lmod}{\backslash}
\newcommand{\N}{\mathbb{N}}
\newcommand{\Z}{\mathbb{Z}}
\newcommand{\Q}{\mathbb{Q}}
\newcommand{\R}{\mathbb{R}}
\renewcommand{\H}{\mathbb{H}}
\newcommand{\F}{\mathbb{F}}
\newcommand{\M}{\mathbb{M}}
\newcommand{\mf}{\mathfrak}
\newcommand{\Zmod}[1]{\mathbb{Z}/ #1\mathbb{Z}}
\newcommand{\Matrix}[1]{\begin{pmatrix} #1 \end{pmatrix}}
\newcommand{\Mod}[1]{\ \left({\rm mod}\ #1\right)}
\newcommand\prn[1]{\left(#1\right)}
\newcommand\gen[1]{\left\langle#1\right\rangle}
\newcommand{\qr}[2]{\left(\frac{#1}{#2}\right)}
\newcommand{\al}[2]{#1\in\operatorname{AL}(#2)}
\newcommand{\nth}[1]{#1^\text{th}}
\newcommand{\conj}[1]{\overline{#1}}
\newcommand{\psring}[2]{#1[\![#2]\!]}
\newcommand{\divides}{\mid}
\newcommand{\exact}{\|}
\renewcommand{\a}{\alpha}
\newcommand{\g}{\gamma}
\newcommand{\G}{\Gamma}
\newcommand{\e}{\epsilon}
\renewcommand{\t}{\tau}
\newcommand{\mt}{\mathcal T}
\newcommand{\inftyzero}{
 \tikzset{
 infty/.style={rectangle, draw=black},
 zero/.style={fill=lightgray}
 }
}
\let\c@table\c@figure
\numberwithin{equation}{section}
\numberwithin{table}{section}
\numberwithin{figure}{section}
\newtheorem{Theorem}{Theorem}[section]
\newtheorem{Corollary}[Theorem]{Corollary}
\newtheorem{Lemma}[Theorem]{Lemma}
\newtheorem{Proposition}[Theorem]{Proposition}
\newtheorem{Conjecture}[Theorem]{Conjecture}
 { \theoremstyle{definition}
\newtheorem{Definition}[Theorem]{Definition}
\newtheorem{Example}[Theorem]{Example}
\newtheorem{Remark}[Theorem]{Remark} }
\begin{document}

\allowdisplaybreaks

\newcommand{\arXivNumber}{1809.02913}

\renewcommand{\thefootnote}{}

\renewcommand{\PaperNumber}{033}

\FirstPageHeading

\ShortArticleName{$p$-Adic Properties of Hauptmoduln with Applications to Moonshine}

\ArticleName{$\boldsymbol{p}$-Adic Properties of Hauptmoduln\\ with Applications to Moonshine\footnote{This paper is a~contribution to the Special Issue on Moonshine and String Theory. The full collection is available at \href{https://www.emis.de/journals/SIGMA/moonshine.html}{https://www.emis.de/journals/SIGMA/moonshine.html}}}

\Author{Ryan C.~CHEN, Samuel MARKS and Matthew TYLER}

\AuthorNameForHeading{R.C.~Chen, S.~Marks and M.~Tyler}

\Address{Department of Mathematics, Princeton University, Princeton, NJ 08544, USA}
\Email{\href{mailto:rcchen@princeton.edu}{rcchen@princeton.edu},
 \href{mailto:spmarks@princeton.edu}{spmarks@princeton.edu},
 \href{mailto:mttyler@princeton.edu}{mttyler@princeton.edu}}

\ArticleDates{Received September 19, 2018, in final form April 10, 2019; Published online April 29, 2019}

\Abstract{The theory of monstrous moonshine asserts that the coefficients of Hauptmo\-duln, including the $j$-function, coincide precisely with the graded characters of the monster module, an infinite-dimensional graded representation of the monster group. On the other hand, Lehner and Atkin proved that the coefficients of the $j$-function satisfy congruences modulo $p^n$ for $p \in \{2, 3, 5, 7, 11\}$, which led to the theory of $p$-adic modular forms. We combine these two aspects of the $j$-function to give a general theory of congruences modulo powers of primes satisfied by the Hauptmoduln appearing in monstrous moonshine. We prove that many of these Hauptmoduln satisfy such congruences, and we exhibit a relationship between these congruences and the group structure of the monster. We also find a distinguished class of subgroups of the monster with graded characters satisfying such congruences.}

\Keywords{modular forms congruences; $p$-adic modular forms; moonshine}

\Classification{11F11; 11F22; 11F33}

\renewcommand{\thefootnote}{\arabic{footnote}}
\setcounter{footnote}{0}

\section{Introduction and statements of results}\label{sec:intro}
The theory of monstrous moonshine arose from the remarkable observation of McKay and Thompson \cite{Thompson79} that
\begin{gather*}196884 = 1 + 196883\end{gather*}
and its generalizations, including
\begin{gather*}
 21493760 = 1 + 196883 + 21296876, \\
 864299970 = 2 \times 1 + 2 \times 196883 + 21296876 + 842609326, \\
 20245856256 = 2 \times 1 + 3 \times 196883 + 2 \times 21296876 + 842609326 + 19360062527.
\end{gather*}
Here, the left-hand sides of the equations are the coefficients of the \textit{normalized modular $j$-function}
\begin{gather*}J(\tau) = j(\tau) - 744 = q^{-1} + 196884q + 21493760q^2 + \cdots,\qquad\text{where} \quad q = {\rm e}^{2\pi {\rm i} \tau},\end{gather*}
and the right-hand sides are simple sums involving the dimensions of the irreducible representations of the monster group $\M$:
\begin{gather*}1,\; 196883,\; 21296876,\; 842609326,\; 19360062527,\; \dots.\end{gather*}
Thompson conjectured \cite{Thompson79b} that these identities could be explained by the existence of an infinite-dimensional graded \textit{monster module}
\begin{gather*}V^\natural = \bigoplus_{n=-1}^\infty V^\natural_n\end{gather*}
such that the graded dimension is given by $J$. More generally, the graded-trace functions
\begin{gather*}\mathcal{T}_g(\t) = \sum_{n=-1}^\infty \tr\big(g | V_n^\natural\big) q^n\end{gather*}
for the action of $\M$ on $V^\natural$ are known as the \textit{McKay--Thompson series} and depend only on the conjugacy class of $g\in \M$. As part of their famous monstrous moonshine conjectures, Conway and Norton computed for each monster conjugacy class $g$ a genus zero group $\Gamma_g\le \GL_2^+(\R)$ on which they conjectured $\mathcal T_g$ was a \textit{normalized Hauptmodul}~\cite{Conway79}. That is, each $\mathcal{T}_g$ was conjectured to be the unique generator $\mathcal{T}_{\G_g}$ of the function field of the genus zero curve $\Gamma_g \lmod \H^*$ having $q$-expansion of the form $q^{-1} + O(q)$ at $\infty$. Since all of the Hautpmoduln appearing in this paper will be normalized (meaning that they are bounded away from $\infty$ and have $q$-expansion $q^{-1} + O(q)$ at $\infty$), we will henceforth omit the word ``normalized'' and refer to such functions simply as \textit{Hautpmoduln}. Frenkel--Lepowsky--Meurman \cite{Frenkel84,Frenkel85} constructed $V^\natural$ with the correct graded dimensions, and Borcherds \cite{Borcherds92} proved that the McKay--Thompson series were Hauptmoduln for the $\Gamma_g$ given by Conway--Norton. After the proof of monstrous moonshine, different incarnations of moonshine were shown for other finite groups, such as the largest Mathieu group~$M_{24}$~\cite{Gannon16}, and later the other $22$ groups appearing in umbral moonshine \cite{Duncan15}. There is also a notion of generalized moonshine, conjectured by Norton~\cite{Norton87} and recently proved by Carnahan~\cite{Carnahan16}.

Thirty years before the observation of McKay and Thompson, Lehner \cite{Lehner49a,Lehner49b} and Atkin \cite{Atkin67} proved that the Fourier expansion of $J(\t) = q^{-1} + \sum c(n) q^n$ satisfies the following congruences for all positive $\a$:
\begin{gather}
 c(2^\alpha n) \equiv 0\Mod{2^{3\alpha + 8}}, \notag\\
 c(3^\alpha n) \equiv 0\Mod{3^{2\alpha + 3}}, \notag\\
 c(5^\alpha n) \equiv 0\Mod{5^{\alpha + 1}},\notag\\
 c(7^\alpha n) \equiv 0\Mod{7^\alpha}, \notag \\
 c(11^\alpha n) \equiv 0\Mod{11^\alpha}.\label{eqn:j_congruences}
\end{gather}
Viewed another way, these identities state that $J|U_p^n$ uniformly converges to zero $p$-adically as $n \to \infty$, where $U_p$ is the operator defined on $q$-expansions by
\begin{gather*}
 \prn{\sum a(n)q^n}|U_p = \sum a(pn)q^n.
\end{gather*}
Such congruences led Serre, Katz, and others to develop a robust and fruitful theory of $p$-adic modular forms \cite{Calegari,Gouvea,Katz73,Katz76,Serre72}.

Given the deep connections between $J$ and the monster, one might wonder whether these $p$-adic properties of $J$ are special cases of a more general $p$-adic phenomenon taking place among the Hauptmoduln appearing in monstrous moonshine. To make this more precise, given a~prime~$p$ and a~modular function~$f$, we say that~$f$ is \textit{$p$-adically annihilated} if the $q$-series $f|U_p^n$ uniformly converges to $0$ in the $p$-adic limit as $n\rightarrow\infty$. Given that $J$ is $p$-adically annihilated for $p\in\{2,3,5,7,11\}$, we can then ask if other Hauptmoduln appearing in monstrous moonshine are as well.

There is some literature studying coefficient congruences of a related nature. The papers \cite{Andersen13, Jenkins15} discuss Hauptmoduln on $\Gamma_0(N)$, and \cite{Thompson79b} discusses other coefficient congruences involving Hauptmoduln. However, there has not been a systematic study of $p$-adic annihilation for all of the monstrous moonshine Hauptmoduln.

Our first main result is that $p$-adic annihilation is actually quite common among the Hauptmoduln of monstrous moonshine. In fact, out of the $171$ Hauptmoduln in monstrous moonshine, we will show that $97$ have $p$-adic annihilation for some prime $p$.
\begin{Theorem}\label{thm:annihilation} For primes $p\in\{2,3,5,7,11\}$, the Hauptmoduln $\mt$ corresponding to the genus zero groups of monstrous moonshine appearing in Table~{\rm \ref{tbl:Up_annihilation}} are $p$-adically annihilated.
\end{Theorem}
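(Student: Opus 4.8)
The plan is to reduce $p$-adic annihilation to a finite computation, exploiting the fact that $U_p$ maps a suitable finite-dimensional space of modular functions to itself. First I would recall the standard setup from the theory of $p$-adic modular forms: for a genus zero group $\Gamma_g$ of level $N$ (with $p \mid N$ in the cases of interest, or after passing to $\Gamma_0(Np)$), the $U_p$ operator acts on the space $M^!_0(\Gamma)$ of weakly holomorphic modular functions whose poles are supported at the cusp $\infty$. Such a space is spanned over $\C$ (or over $\Z_{(p)}$, with care about integrality) by $1$ and the functions $\mt_\Gamma^k$ for $k \ge 1$, but more usefully by a basis $f_0 = 1, f_1, f_2, \dots$ where $f_m = q^{-m} + O(q)$. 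The key structural input is that $U_p$ lowers pole order in a controlled way: $f_m | U_p$ has pole order at most $\lceil m/p \rceil$ at $\infty$ (and is still holomorphic elsewhere), so $U_p$ preserves each finite-dimensional space $M_{\le m}$ of forms with pole order $\le m$, once $m$ is large enough that $\lceil m/p\rceil \le m$, i.e.\ always. Hence $\mt = \mt_\Gamma = f_1$ lies in the $U_p$-stable finite-dimensional space $M_{\le 1} = \langle 1, f_1\rangle$ — wait, more carefully, $\mt|U_p$ has a pole of order $\lceil 1/p\rceil = 1$, so we do stay in $M_{\le 1}$.

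Actually the cleaner route, and the one I would carry out, is this: show that $U_p$ acts on the two-dimensional space $V = \langle 1, \mt \rangle$ (or a slightly larger explicit finite-dimensional space if $\mt|U_p$ has a constant term issue), compute the matrix of $U_p$ on $V$ in the basis $\{1,\mt\}$ using the explicit $q$-expansion of $\mt$ (known from the moonshine tables / Conway--Norton data), and then check that the relevant eigenvalue/matrix entry is divisible by $p$. Concretely, write $\mt | U_p = a + b\,\mt$ for constants $a, b \in \Z_{(p)}$ determined by matching the first two $q$-coefficients; since $U_p$ kills nothing from the pole, $b$ is forced and $a$ is forced. Then $\mt|U_p^n = (\text{something})\cdot b^{n-1} \mt + (\text{const})$, and uniform $p$-adic convergence to $0$ holds as soon as $p \mid b$ and $p$ divides the constant term appropriately — i.e.\ once we know $\mt|U_p \equiv 0 \pmod{p}$ in the space of $q$-expansions, iterating gives $\mt|U_p^n \to 0$. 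So the whole theorem reduces to checking, for each of the finitely many $(\Gamma_g, p)$ pairs in Table~\ref{tbl:Up_annihilation}, the single congruence $\mt_{\Gamma_g}|U_p \equiv 0 \pmod p$, which is a finite check on finitely many Fourier coefficients (bounded using the Sturm-type bound: two forms of pole order $\le 1$ and weight $0$ on $\Gamma_0(Np)$ agree mod $p$ once enough coefficients — a number depending only on the index $[\SL_2(\Z):\Gamma_0(Np)]$ — agree).

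The step I expect to be the main obstacle is controlling integrality and the precise stable space: one must ensure the $q$-expansions of the $\mt_{\Gamma_g}$ have $p$-integral (indeed $\Z$-) coefficients, that $U_p$ preserves $p$-integrality (true, since $U_p$ just picks out coefficients), and — most delicately — that the finite-dimensional space one works in is genuinely $U_p$-stable and that the pole order does not secretly grow because of bad behavior at cusps other than $\infty$ (one may need to work on $\Gamma_0(Np)$ or a conjugate and track the $\Gamma$-action on cusps carefully, or invoke that $\mt_{\Gamma_g}$ is bounded at all cusps $\ne \infty$). There may also be subtleties for the composite-level groups (the $\Gamma_0(N)+e,f,\dots$ of Conway--Norton with Atkin--Lehner involutions): here $U_p$ does not obviously act on the plus-quotient, so one works upstairs on $\Gamma_0(N)$ and pushes the congruence down, or directly verifies the $q$-expansion congruence. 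Once the framework is set, the theorem is finite and in principle mechanical; I would organize the proof as (i) the $U_p$-stability lemma, (ii) reduction to the single mod-$p$ congruence via iteration, (iii) a Sturm bound making that congruence a finite check, and (iv) the table of verified cases, possibly with a remark that the computations were done by computer.
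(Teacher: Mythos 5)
There is a genuine gap, and it occurs at the very step your whole reduction rests on. The space $\langle 1, \mathcal{T}\rangle$ (or any space of forms with poles only at $\infty$ and bounded pole order) is \emph{not} $U_p$-stable. When $\mathcal{T}$ is a Hauptmodul on a group of level $N$, the function $\mathcal{T}|U_p$ lives on a (generally different) group and, crucially, acquires poles at the cusps equivalent to $0$ under $\Gamma_0(p)$ -- this is exactly the content of Lemma~\ref{lemma:hauptmodul_poles} in the paper, and it is why the paper introduces the $\Delta$-quotient $g\equiv 1\Mod{p}$ of Lemma~\ref{lemma:delta_quotient} and the trace maps of Section~\ref{ssec:T-traces} to clear those poles before any finite-dimensional argument can be run. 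Already for $J$ at level $1$, $J|U_p$ is on $\Gamma_0(p)$ with a pole at the cusp $0$ and is not of the form $a+bJ$; if it were, one would have $c(pn)=bc(n)$ for all $n$, which is false. The relations closest in spirit to your ``$\mathcal{T}|U_p=a+b\mathcal{T}$'' are the compression formulas of Proposition~\ref{prop:compression}, but these express $\mathcal{T}|U_p$ in terms of Hauptmoduln of \emph{other} groups ($\G^p$, $\langle\G,w_p\rangle$) and only reduce Theorem~\ref{thm:annihilation} to a smaller list of cases; they do not by themselves yield annihilation.

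The second, independent gap is your claim that the single congruence $\mathcal{T}|U_p\equiv 0\Mod{p}$ iterates to give $\mathcal{T}|U_p^n\to 0$. Writing $\mathcal{T}|U_p=pF$ gives no control over $F|U_p$, and in fact this implication is false: the paper distinguishes this weaker property (``weak $p$-adic annihilation'', Section~\ref{ssec:M-monster}) from genuine annihilation precisely because many moonshine Hauptmoduln satisfy $\mathcal{T}|U_p\equiv 0\Mod{p}$ without being annihilated -- e.g.\ the Hauptmodul for $\Gamma_0(3)$ has $v_2\big(\mathcal{T}|U_2^n\big)=2$ for all $n$ (Appendix~\ref{apndx:table}, Table~\ref{tbl:weak_annihilation}). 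What actually replaces your argument is: (i) for $p\ge 5$, the Serre-style theory of Section~\ref{sec:serre} (trace down after multiplying by $g^{p^m}$, so that $\mathcal{T}|U_p$ becomes a weight-$0$ $p$-adic modular form), the filtration bound of Lemma~\ref{lemma:filtration_facts}, and the ordinary decomposition of Proposition~\ref{prop:ordinary_decomp}: one computes the finite-dimensional ordinary space $\mf S(\G,[\lambda]_p)^0$ and checks it is contained in $\F_p$, whence $U_p$-iterates converge to the constant term $0$ (Lemma~\ref{lemma:small_dim_annihilation}); (ii) for $p=2,3$, where that theory is unavailable, explicit Lehner-type functional equations for eta-quotients and a Newton-sum induction (Proposition~\ref{prop:cooking}) give quantitative rates of annihilation. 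Your Sturm-bound instinct is sound but enters only in verifying auxiliary identities (after clearing poles), not as the engine of the proof.
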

We further conjecture that Table~\ref{tbl:Up_annihilation} gives \textit{all} the Hauptmoduln appearing in monstrous moonshine that are $p$-adically annihilated for any prime $p$ (see Conjecture~\ref{conj:annihilation_converse}).

Once Theorem~\ref{thm:annihilation} has established a class of Hauptmoduln coming from monstrous moonshine with $p$-adic annihilation, we may next ask whether the structure of the monster group informs $p$-adic properties of the Hauptmoduln. Specifically, we are interested in relating the \textit{power maps} $g\mapsto g^m$ of the monster (or equivalently, the corresponding maps of conjugacy classes) to $p$-adic annihilation of Hauptmoduln.
\begin{Theorem}\label{thm:power} Let $\mt_g$ be the Hauptmodul of a group appearing in Table~{\rm \ref{tbl:Up_annihilation}}, so that $\mt_g$ is $p$-adically annihilated by Theorem~{\rm \ref{thm:annihilation}}. Outside of the exceptions discussed in Section~{\rm \ref{ssec:A-power}}, we also have that $\mt_{g^m}$ is $p$-adically annihilated for any $m \in \N$.
\end{Theorem}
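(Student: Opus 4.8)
The plan is to deduce the $p$-adic annihilation of $\mt_{g^m}$ from that of $\mt_g$ using the replication formulas satisfied by the McKay--Thompson series. By Borcherds' theorem together with the fact (part of monstrous moonshine) that every $\mt_g$ is completely replicable with $m$-th replicate $\mt_{g^m}$, we have for each prime $\ell$ the identity
\[
 \Phi_\ell(\mt_g)\;=\;\mt_{g^\ell}\,|\,V_\ell\;+\;\ell\cdot\big(\mt_g\,|\,U_\ell\big),
\]
where $V_\ell$ sends $\sum a(n)q^n$ to $\sum a(n)q^{\ell n}$ and $\Phi_\ell$ is the monic integral Faber polynomial of degree $\ell$ determined by $\Phi_\ell(\mt_g)=q^{-\ell}+O(q)$. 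Applying $U_\ell$ and using $U_\ell V_\ell=\Id$ gives the clean formula $\mt_{g^\ell}=\Phi_\ell(\mt_g)\,|\,U_\ell-\ell\cdot\mt_g\,|\,U_\ell^2$. Since $g^m=(g^{m_1})^{m_2}$, it suffices to prove the theorem when $m=\ell$ is prime.

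Now I split on whether $\ell=p$. If $\ell\ne p$ then $U_\ell$ commutes with $U_p$ (both merely extract a subprogression of Fourier coefficients), so $\mt_{g^\ell}\,|\,U_p^n=\big(\Phi_\ell(\mt_g)\,|\,U_p^n\big)\,|\,U_\ell-\ell\cdot\big(\mt_g\,|\,U_p^n\big)\,|\,U_\ell^2$; since $U_\ell$ and $U_\ell^2$ are $p$-adically norm-nonincreasing, this tends to $0$ provided both $\mt_g$ and $\Phi_\ell(\mt_g)$ are $p$-adically annihilated. If $\ell=p$ then $\mt_{g^p}\,|\,U_p^n=\Phi_p(\mt_g)\,|\,U_p^{n+1}-p\cdot\mt_g\,|\,U_p^{n+2}$, which tends to $0$ under the same two hypotheses (the factor $p$ only helps). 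So in both cases the theorem reduces to the assertion that the Faber polynomial $\Phi_\ell(\mt_g)$ — indeed any $p$-integral polynomial in $\mt_g$ whose $q$-expansion has vanishing constant term — is $p$-adically annihilated whenever $\mt_g$ is.

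This is the crux. Since $\Gamma_g$ has genus zero, such an expression lies in $\Z_p[\mt_g]$ with vanishing constant Fourier coefficient, hence is an integral combination of the $\Phi_k(\mt_g)$. The operator $U_p$ does not preserve this space — it creates poles at cusps other than $\infty$ — but it is completely continuous on the larger space $B$ of $p$-adic modular functions, holomorphic on $\H$, for $\Gamma_g$ and for the finitely many groups reached under $U_p$; and one can compute its unit-root part, i.e.\ the image of the ordinary projector $e=\lim_n U_p^{n!}$, directly. Indeed $U_p$ strictly lowers the order of the pole at $\infty$, so after finitely many applications any given element lands in a fixed finite-dimensional subspace, on which $U_p$ acts by an explicit matrix. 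Because $\mt_g$ generates the function field of $\Gamma_g$, its $U_p$-orbit spans the relevant module, so the hypothesis $e\,\mt_g=0$ forces $e$ to vanish on that whole module apart from constants — which is exactly $p$-adic annihilation. As Theorem~\ref{thm:annihilation} involves only a short list of genus-zero groups, this can be checked case by case, reusing the analysis of $U_p$ on Hauptmoduln behind Theorem~\ref{thm:annihilation}.

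The hardest step is precisely this one: controlling $U_p$ on a space that is a priori infinite-dimensional (because of the auxiliary cusps) and showing that its unit-root subspace is finite-dimensional and collapses onto the constants once $\mt_g$ is annihilated. A second, genuine obstacle is that the conclusion is simply false for finitely many pairs $(g,m)$: for these the Conway--Norton symbol of $\Gamma_{g^m}$ fails to inherit the $p$-divisibility that the argument above requires, and $\mt_{g^m}$ is in fact not $p$-adically annihilated. These exceptions are found by direct computation and collected in Section~\ref{ssec:A-power}; the argument sketched here handles every remaining case.
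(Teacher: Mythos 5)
Your reduction via the replication identity is fine as far as it goes: the formula $\Phi_\ell(\mt_g)=\mt_{g^\ell}|V_\ell+\ell\,(\mt_g|U_\ell)$, the consequence $\mt_{g^\ell}=\Phi_\ell(\mt_g)|U_\ell-\ell\,\mt_g|U_\ell^2$, the commutation of $U_\ell$ with $U_p$ for $\ell\neq p$, and the fact that $U_\ell$ is $p$-adically norm-nonincreasing are all correct. The genuine gap is the crux you reduce to: the assertion that $p$-adic annihilation of $\mt_g$ forces $p$-adic annihilation of every $p$-integral, constant-term-free polynomial in $\mt_g$ (in particular of $\Phi_\ell(\mt_g)$). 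This implication is false, and the failures are not confined to the excluded pairs $(g,m)$ themselves: for the class 12I (group $12$, $p=2$) one has $\mt|U_2=0$, so $\mt$ is trivially annihilated and the ordinary projector kills $\mt$, yet $\mt^2$ is \emph{not} $2$-adically annihilated; similarly for the $h$-divisibility cases such as $6|3$ at $p=5$. Your argument that ``the $U_p$-orbit of $\mt_g$ spans the relevant module, so $e\,\mt_g=0$ forces $e$ to vanish on the whole module'' is exactly where this breaks: the $U_p$-orbit $\{\mt_g,\mt_g|U_p,\mt_g|U_p^2,\dots\}$ does not span the space of polynomials in $\mt_g$, and linearity of the projector gives no information about $e\,\mt_g^2$ from $e\,\mt_g=0$. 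What you would actually need is what Section~\ref{ssec:A-power} calls \emph{strong} $p$-adic annihilation, and establishing it requires controlling the ordinary spaces attached to all the twisted eigenvalue maps $\lambda^{\sigma_r}$ (for $\mt^r$), not just the one governing $\mt$ itself --- an extra verification that neither your spectral heuristic nor the computations behind Theorem~\ref{thm:annihilation} automatically supply; moreover the ordinary-space machinery as developed here requires $p\ge 5$, so your route would still need a separate device at $p=2,3$.

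For comparison, the paper does not argue through replication at all: Theorem~\ref{thm:power} is obtained from Theorem~\ref{thm:annihilation} together with Conway--Norton's fact that $\mt_{g^m}$ is the Hauptmodul of $\G_g^m$, by observing that Table~\ref{tbl:Up_annihilation} is closed under $\G\mapsto\G^m$ apart from the exceptions catalogued in Section~\ref{ssec:A-power}; the compression formulas of Proposition~\ref{prop:compression} (e.g.\ $p\,\mt_\G|U_p=\mt_{\G^p}-\mt_\G$ when $w_p\in\G$) supply the structural link and are verified by finite Sturm-bound checks. Your replication identity is a legitimate alternative source of such relations, but as written the proposal's central step is an invalid inference, so the proof does not go through without substituting a genuine verification of strong annihilation group by group.
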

Although Theorem~\ref{thm:power} follows from Theorem~\ref{thm:annihilation}, we will prove the two theorems in tandem, relying on the structure provided by Theorem~\ref{thm:power} to make Theorem~\ref{thm:annihilation} easier to prove. As an illustration of Theorem~\ref{thm:power}, see Fig.~\ref{fig:2web}, which shows conjugacy classes with Hauptmoduln that are $2$-adically annihilated and the power maps between them. For a full explanation of the notation used in this figure, and the corresponding figures for $p = 3,5,7,11$, see Appendix~\ref{apndx:web}.
\begin{figure}[t] \centering
 \begin{tikzpicture}[scale=2]
 \inftyzero
 \small{
 \node at ( 0, 0) (1) {$1$};
 \node at (-.5, -1) (2) {$2$};
 \node at ( 0, -1) (2+) {$2{+}$};
 \node at ( 1, -1) (3I3) {$3|3$};
 \node at ( 2, -1.5) (3+) {$3{+}$};
 \node[infty] at (-1, -2) (4) {$4$};
 \node at (-.5, -2) (4+) {$4{+}$};
 \node at ( 1, -2) (6I3) {$6|3$};
 \node at ( 3, -2) (5+) {$5{+}$};
 \node at (1.75, -2.5) (6+3) {$6{+}3$};
 \node at (2.25, -2.5) (6+) {$6{+}$};
 \node at ( 4, -2.5) (11+) {$11{+}$};
 \node[infty] at (-1.5, -3) (8) {$8$};
 \node at (-1, -3) (8+) {$8{+}$};
 \node at ( 1, -3) (12I3+) {$12|3{+}$};
 \node at (2.75, -3) (10+5) {$10{+}5$};
 \node at (3.25, -3) (10+) {$10{+}$};
 \node at ( 2, -3.5) (12+) {$12{+}$};
 \node at (3.75, -3.5) (22+11) {$22{+}11$};
 \node at (4.25, -3.5) (22+) {$22{+}$};
 \node[infty] at (-2, -4) (16) {$16$};
 \node at (-1.5, -4) (16+) {$16{+}$};
 \node[infty] at ( 0, -4) (12+3) {$12{+}3$};
 \node at ( 3, -4) (20+) {$20{+}$};
 \node at (-.5, -5) (24+) {$24{+}$};
 \node at ( 4, -4.5) (44+) {$44{+}$};
 \node at (-2, -5) (32+) {$32{+}$};

 \draw (1) -- (2) --(4) -- (8) -- (16) -- (32+);
 \draw (16+) -- (8) -- (24+) -- (12+3) -- (4) -- (8+);
 \draw (1) -- (3I3) -- (6I3) -- (12I3+) -- (4+) -- (2) -- (6I3);
 \draw (1) -- (2+);
 \draw (1) to[out=-30, in=120] (3+);
 \draw (1) to[out=-20, in=120] (5+);
 \draw (1) to[out=-10, in=120] (11+);
 \draw (2) to[out=-40, in=165] (6+3);
 \draw (2) to[out=-50, in=165] (10+5);
 \draw (2) to[out=-60, in=165] (22+11);
 \draw (2+) to[out=-20] (22+);
 \draw (2+) to[out=-30] (10+);
 \draw (2+) to[out=-40] (6+);
 \draw (4+) to[out=-40, in=180] (12+);
 \draw (4+) to[out=-50, in=180] (20+);
 \draw (4+) to[out=-60, in=180] (44+);
 \draw (6+) -- (3+) -- (6+3) -- (12+);
 \draw (10+) -- (5+) -- (10+5) -- (20+);
 \draw (22+) -- (11+) -- (22+11) -- (44+);
 \draw (12+3) to[bend right] (6+3);
 }
 \end{tikzpicture}
 \caption{Conjugacy classes with $2$-adically annihilated Hauptmoduln and their power maps.} \label{fig:2web}
\end{figure}
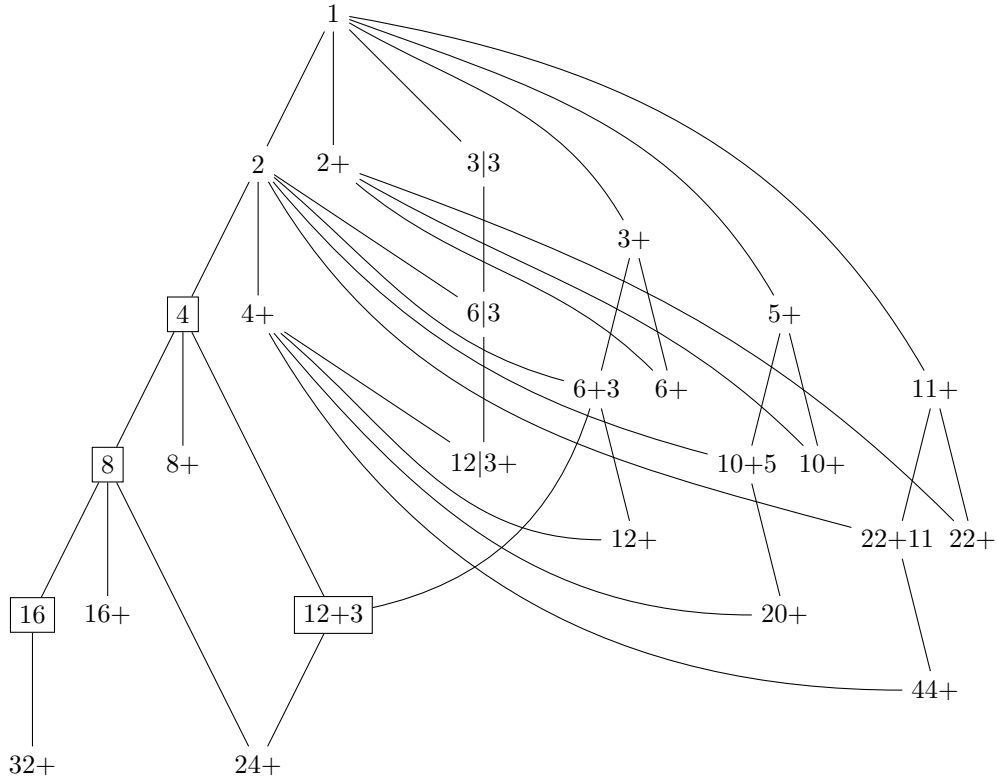

Finally, we consider which finite groups have infinite-dimensional representations with similar $p$-adic properties. We define a \textit{moonshine module} for a finite group $G$ to be a graded $G$-module $V = \bigoplus_{n=-1}^\infty V_n$ such that for each $g\in G$ the graded trace $\mathcal T_g = \sum \tr(g|V_n)q^n$ associated to the action of $g$ on $V$ is the Hauptmodul of an order $\ord(g)$ conjugacy class of the monster. We also require that the power maps of $G$ interact with the Hauptmoduln in a way that mimics what occurs in monstrous moonshine; see Section~\ref{sec:moonshine} for the precise condition. For an irreducible character $\chi$ of $G$, we write $m_\chi(n)$ for the multiplicity of $\chi$ appearing in the character of $G$ acting on $V_n$, and define the \textit{multiplicity generating function}
\begin{gather*}
\mathcal M_\chi(\tau) = \sum_{n=-1}^\infty m_\chi(n)q^n.
\end{gather*}
These series $\mathcal{M}_\chi$ were perhaps first studied in \cite{Harada96}.
We say that a moonshine module $V$ for $G$ is a \textit{$p$-adic moonshine module} if $\mathcal M_\chi$ is $p$-adically annihilated for each irreducible character $\chi$. We may then ask various questions about finite groups with $p$-adic moonshine modules, such as the number of such groups and which primes may divide their orders. In Section~\ref{sec:moonshine} we address these questions and give examples of groups with $p$-adic moonshine modules. In particular, we find that the groups in Table~\ref{tbl:intro_monster_subgroups} have $p$-adic moonshine for the listed $p$ in a slightly more general sense explained in Section~\ref{ssec:M-monster}. These groups arise as the centralizers of certain commuting pairs of elements of the monster in the conjugacy class $pA$. For other instances of moonshine modules for centralizers of elements of the class $pA$, see Ryba's modular moonshine conjectures \cite{Ryba96}, which were proved by Borcherds and Ryba \cite{Borcherds98, Borcherds96}.

\begin{table}[ht]
\centering
\begin{tabular}{ c|c|c|c|c|c }
 $p$ & $2$ & $3$ & $5$ & $7$ & $11$ \\ \hline
 $C\big(pA^2\big)$ & $2^2 \cdot {}^2\!E_6(2)$ & $3^2 \times O_8^+(3)$ & $5^2 \times U_3(5)$ & $7^2 \times L_2(7)$ & $11^2$ \tsep{3pt}\bsep{3pt}\\ \hline
 $\#C\big(pA^2\big)$ & $2^{38} {\cdot} 3^9 {\cdot} 5^2 {\cdot} 7^2 {\cdot} 11 {\cdot} 13 {\cdot} 17 {\cdot} 19$ & $2^{12} {\cdot} 3^{14} {\cdot} 5^2 {\cdot} 7 {\cdot} 13$ & $2^4 {\cdot} 3^2 {\cdot} 5^5 {\cdot} 7$ & $2^3 {\cdot} 3 {\cdot} 7^3$ & $11^2$\tsep{3pt}
\end{tabular}
\caption{Subgroups of the monster with weakly $p$-adic moonshine.}\label{tbl:intro_monster_subgroups}
\end{table}

Before proceeding, we outline the structure of this paper. We begin in Section~\ref{sec:technical} by proving technical lemmas that will be useful later in the paper. In Section~\ref{sec:serre}, we extend Serre's theory of $p$-adic modular forms such that it becomes applicable to the groups appearing in monstrous moonshine, and we begin to see $p$-adic properties of Hauptmoduln. In Section~\ref{sec:annihilation}, we prove Theorems~\ref{thm:annihilation} and~\ref{thm:power} using both the theory of Section~\ref{sec:serre} and the interplay between power maps and $p$-adic properties. We conclude in Section~\ref{sec:moonshine} by considering finite groups with $p$-adic moonshine modules, and showing that only finitely many such groups exist. We also discuss examples of groups with $p$-adic moonshine, including those in Table~\ref{tbl:intro_monster_subgroups}.

\section{Preliminaries}\label{sec:technical}
In this section, we collect technical details and definitions that will be used later. We first describe the types of groups $\G$ whose Hauptmoduln will be studied. We then discuss various properties of operators on spaces of modular forms, most importantly the $U_p$ operator and the Atkin--Lehner involutions. We also give descriptions of which cusps a Hauptmodul may have poles at once $U_p$ is applied to it, and we give a modular form $g$ with zeros at all such cusps. Finally, we discuss the trace of a modular form, which transforms modular forms on some $\G$ into modular forms on some $\G'\ge \G$. These facts will ultimately be used to interpret Hauptmoduln as $p$-adic modular forms in Section~\ref{sec:serre}.

\subsection[$n|h$-type groups]{$\boldsymbol{n|h}$-type groups}\label{ssec:T-nhtype}
Monstrous moonshine associates to each $g\in \M$ a Hauptmodul $\mt_g$ for some genus zero group $\G_g\le \GL_2(\R)^+$. This means that $\G_g\lmod \H^*$ is a genus zero curve and that $\mt_g$ is a generator for the function field such that $\mt_g$ is bounded away from the cusp $\infty$; moreover the $q$-expansion of $\mt_g$ at infinity begins $\mt_g = q^{-1} + O(q)$. Conway and Norton described the groups $\G_g$ in \cite{Conway79}, all of which take on a particular form which we reproduce here.

First we describe the normalizer of $\G_0(N)$ in $\PSL_2(\R)$. Let $h$ be the largest integer such that $h^2|N$ and $h|24$, and set $n = N/h$. The normalizer of $\G_0(N)$ is given by $\bigcup_{e\exact n/h} w_e$ where $w_e$ is the set of all matrices $A = \left(\begin{smallmatrix}ae & b/h \\ cn & de\end{smallmatrix}\right)$ such that $a,b,c,d\in \Z$ and $\det A = e$. Here the notation $x\exact y$ means that $x$ \textit{exactly divides}~$y$, i.e., that $x|y$ and $\gcd(x,y/x) = 1$. Given integers $e_1$, $e_2$ we set $e_1*e_2 = \frac{e_1e_2}{\gcd(e_1,e_2)^2}$, and under $*$ the set of exact divisors of any integer~$N$ forms the abelian group $(\Z/2\Z)^n$ where $n$ is the number of primes dividing $N$.

More generally, a class of subgroups called \textit{$n|h$-type groups} is defined as follows. Let $n$ be any positive integer and let $h|\gcd(n,24)$. Set $N=nh$ and $w_e$ as above, for $e\exact n/h$. We define the group $\G_0(n|h) = w_1$. We will often abuse notation and write $w_e$ for any element of $w_e$, and we see that $w_{e_1}w_{e_2} = w_{e_1*e_2}$. Since $h|24$ we have that $m^2\equiv 1\Mod{h}$ for all $m$ coprime to $h$. For a subgroup $\{1,e_1,\dots, e_n\}$ of the group of exact divisors of $n/h$, we then define
\begin{gather*}\Gamma_0(n|h){+}e_1,e_2,\dots,e_n = \langle \Gamma_0(n|h),w_{e_1},w_{e_2},\dots,w_{e_n}\rangle = w_1\cup w_{e_1}\cup w_{e_2}\cup\dots\cup w_{e_n}.\end{gather*}
A group of this form is called an $n|h$-type group.

Setting $N = nh$, the group $\G_0(n|h){+}e_1,\dots, e_n$ normalizes both $\G_0(n|h)$ and $\G_0(N)$, and the~$w_{e_i}$ are cosets of $\G_0(n|h)$. When $h = 1$ we have $\G_0(n|1) = \G_0(N)$ and we denote the matrix
\begin{gather*}w_e = \Matrix{ae & b \\ cN & de}\end{gather*}
by $W_e$. The matrices $W_e$ for $e\exact N$ are called \textit{Atkin--Lehner involutions}. Given an Atkin--Lehner involution $W_E$ on $\G_0(N)$, we can interpret this as an element of $\G_0(n|h)$ via
\begin{gather*}W_E = \Matrix{aE & b \\ cN & dE} = \Matrix{aE/h_E & b/h_E \\ cN/h_E & dE/h_E} = w_e,\end{gather*}
where $h_E$ is the largest divisor of $h$ with $h_E^2|E$ and $e = E/h_E^2$. In fact, setting
\begin{gather*}\AL(\G) = \{e\exact nh \colon W_e\in \G\text{ and every prime dividing }e\text{ also divides }n/h\}\end{gather*}
we have that this association gives a bijection
\begin{gather*}\AL(\G)\longleftrightarrow\{e\colon w_e\subset \G\}.\end{gather*}
For example, letting $\G = \G_0(8|2){+}4$ we have
\begin{gather*}\AL(\G) = \{1,16\}\longleftrightarrow \{1,4\} = \{e\colon w_e\subset \G\}.\end{gather*}

When dealing with $n|h$-type groups, it is standard to simplify notation in the following ways. When $h = 1$, we simply omit the $|h$, so that $\G_0(n|1) = \G_0(n)$, and when all $e\exact n/h$ are included in a group, we simply write $\G_0(n|h){+}$ so that $\G_0(8|2){+} = \G_0(8|2){+}4$. We will also sometimes use the symbol $n|h{+}e,f,\dots$ to represent the group $\G_0(n|h){+}e,f,\dots$ in order to save space, particularly in tables, so we might write $8|2{+}$ instead of $\G_0(8|2){+}$.

The $n|h$-type groups appear in monstrous moonshine as \textit{eigengroups} of the Hauptmoduln. That is, the Hauptmodul $\mt$ has an associated group $\G_0(n|h) {+} e,\dots$ such that for all $A$ in this group, $\mt(A\tau) = \mu \mt(\tau)$ for some $\nth{h}$ root of unity $\mu$. Conway--Norton \cite{Conway79} conjectured the following rule for computing the eigenvalue $\lambda$ corresponding to an element of $\G_0(n|h){+}e,\dots$:
\begin{enumerate}\itemsep=0pt
 \item[(i)] $\lambda = 1$ for any element of $\G_0(N)$,
 \item[(ii)] $\lambda = 1$ for all $W_e$ with $\al{e}{\G},$
 \item[(iii)] $\lambda = {\rm e}^{-2\pi {\rm i} /h}$ for the element $x = \left(\begin{smallmatrix}1 & 1/h \\ 0 & 1\end{smallmatrix}\right)$,
 \item[(iv)] $\lambda = {\rm e}^{\pm 2\pi {\rm i} /h}$ for the element $y = \left(\begin{smallmatrix} 1 & 0\\ n & 1\end{smallmatrix}\right)$ where the sign is $+$ if and only if $\tau\mapsto \frac{-1}{N\tau}$ is in $\G_0(n|h){+}e,\dots$.
\end{enumerate}
This rule's well-definedness and correctness follow from~\cite{Ferenbaugh93} and the correctness of the monstrous moonshine conjectures, respectively.

Since the cosets $x$ and $y$ generate $\G_0(n|h)$, we have $\G = \langle x,y,W_e \colon \al{e}{\G}\rangle$ for any $n|h$-type group $\G$. Hence Conway--Norton's rule uniquely determines a map $\lambda\colon \G\rightarrow \bm\mu_{h}$ where $\bm\mu_{h}$ denotes the group of~$\nth{h}$ roots of unity. We always use $\lambda$ to denote this map.

More generally, let $\G$ be an $n|h$-type group. An \textit{eigenvalue map} is a homomorphism \mbox{$\eta\colon \!\G\!\rightarrow\! \bm\mu_{2h}$} such that $\G_0(nh)\subset \ker \eta$. Then we define
\begin{gather*}\G_\eta = \ker \eta.\end{gather*}
When $\lambda$ is the map given by Conway--Norton's rule, we have that $\G_\lambda$ is an index $h$ subgroup of $\G$ called the \textit{fixing group} of $\G$. However, Conway--Norton's rule does not always give a well-defined map, so $\G_\lambda$ does not exist for every $n|h$-type group~$\G$; for example $\G_{\lambda}$ doesn't exist when $\G=\Gamma_0(21|3)$. Ferenbaugh \cite[Theorem~2.8]{Ferenbaugh93} classified the groups $\G$ for which Conway--Norton's rule is consistent, and we call such $\G$ \textit{admissible}. There are $213$ admissible $n|h$-type groups giving genus zero groups, including all $171$ groups appearing in monstrous moonshine. Ferenbaugh also determined the structure of the quotient~$\G/\G_0(nh)$, and therefore also the structure of $\G_\lambda/\G_0(nh)$. In $174$ cases, including the groups of monstrous moonshine, the latter quotient group has exponent $2$; the remaining $3$ groups are known as the ``ghosts''. For further discussion of which $n|h$-type groups appear in monstrous moonshine, see~\cite{Conway04}.

In the next section we study modular forms on $n|h$-type groups with given eigenvalue maps, and the action of the $U_p$ operator on such spaces of modular forms.

\subsection[Action of $U_p$ on Hauptmoduln]{Action of $\boldsymbol{U_p}$ on Hauptmoduln}\label{ssec:T-operators}
Given an $n|h$-type group $\G$ with eigenvalue map $\eta\colon \G\rightarrow\bm\mu_{2h}$, we say a weight $k$ weakly holomorphic modular form on $\G_0(nh)$ is \textit{on $\G$ with eigenvalue map $\eta$} or \textit{on $(\G,\eta)$} if
\begin{gather*}f|_k\gamma = \eta(\gamma)f\qquad\text{for all} \quad \gamma\in \G,\end{gather*}
where the weight $k$ slash operator is defined by (\ref{eqn:slash_def}) below. By a \textit{weakly holomorphic modular form} we mean a meromorphic modular form whose poles are supported on the cusps; on the other hand a \textit{modular form} is assumed to be holomorphic everywhere. We write $M_k(\G,\eta)$ for the space of weight $k$ modular forms on $\G$ with eigenvalue map $\eta$. Similarly, we denote the space of weight $k$ modular forms on $\G_0(nh)$ invariant under all $\g\in \G_\eta$ by $M_k(\G_\eta)$. Throughout, all our weights will be integers.

Fix a prime $p$. In this section, we study $U_p$ applied to Hautpmoduln $\mt$, and more generally weakly holomorphic modular forms on $\G_\eta$ or on $\G$ with eigenvalue map $\eta$. For our results to extend to $n|h$-type groups, the results of this section will be stated in the necessary general language. However the reader looking to use these results for modular forms on $\G_0(N){+}e,\dots$ should remember that this corresponds to taking $h = 1$ and ignoring eigenvalue maps in the following results.

Recall that the weight $k$ slash operator $|_k\gamma$ for $\gamma\in \GL_2^+(\R)$ is defined by
\begin{gather}\label{eqn:slash_def}
(f|_k\gamma)(\t) = (\det \g)^{k/2}(c\t + d)^{-k}f(\g\t).
\end{gather}
 If $f$ is on $\SL_2(\Z)$ then for $N\in \N$, we have $f(N\tau)$ on $\G_0(N)$, and for $d|N$ and $e\exact N$,
\begin{gather}\label{eqn:exact_divisor_action}
f(d\t)|_kW_e = \left(\frac{d*e}{d}\right)^{k/2}f((d*e)\t).
\end{gather}
In terms of the slash operator, $U_p$ is defined on weight $k$ modular forms by
\begin{gather}\label{eqn:up_defn}
f|U_p = p^{k/2 - 1}\sum_{\mu = 0}^{p-1}f|_kS_\mu = \frac{1}{p}\sum_{\mu = 0}^{p-1}f(S_\mu\tau),
\end{gather}
where $S_\mu=\left(\begin{smallmatrix} 1 & \mu \\ 0 & p\end{smallmatrix}\right)$. This operator is independent of $k$ and acts on Fourier expansions by
\begin{gather*}\left(\sum a(n)q^n\right)|U_p = \sum a(pn)q^n.\end{gather*}

We first recall some basic facts about the $U_p$ operator (see \cite[Section 2]{Lehner70}).

\begin{Lemma}\label{lemma:up_basic_facts} Let $f$ be a weight $k$ meromorphic modular form on $\G_0(p^\alpha N)$ where $p\nmid N$.
 \begin{enumerate}\itemsep=0pt
 \item[$(a)$] If $e\exact N$ then $f|U_p|_kW_e = f|_kW_e|U_p$.
 \item[$(b)$] $f|U_p$ is modular on $\G_0(p^\beta N)$ where $\beta = \max\{1,\alpha - 1\}$.
 \end{enumerate}
\end{Lemma}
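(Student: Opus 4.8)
The plan is to verify both statements directly from the coset decompositions of the $U_p$ operator and of the Atkin--Lehner involutions, using the explicit matrix forms $S_\mu=\left(\begin{smallmatrix}1&\mu\\0&p\end{smallmatrix}\right)$ and $W_e=\left(\begin{smallmatrix}ae&b\\cN'&de\end{smallmatrix}\right)$ (with $N'=p^\alpha N$). For part $(a)$, since $p\nmid N$ we have $\gcd(p,e)=1$, so $W_e$ normalizes $\G_0(p^\alpha N)$ and commutes with $S_0=\left(\begin{smallmatrix}1&0\\0&p\end{smallmatrix}\right)$ up to scalars; more precisely, I would show that the set $\{S_\mu W_e : 0\le\mu<p\}$ and the set $\{W_e S_\nu : 0\le\nu<p\}$ represent the same cosets of $\G_0(p^\alpha N)$ inside $\GL_2^+(\R)$. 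Concretely, for each $\mu$ one finds $\gamma_\mu\in\G_0(p^\alpha N)$ and an index $\nu=\nu(\mu)$ with $S_\mu W_e = \gamma_\mu W_e S_\nu$, the map $\mu\mapsto\nu$ being a permutation of $\{0,\dots,p-1\}$ (this is where $p\nmid N$ and $p\nmid e$ are used: one solves a congruence mod $p$ for $\nu$). Plugging this into the definition $f|U_p = \frac1p\sum_\mu f(S_\mu\tau)$, slashing by $W_e$, and using that $f$ is invariant (up to the automorphy factor, absorbed in the weight-$k$ slash) under $\gamma_\mu$, the two orders of composition agree and $(a)$ follows.

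For part $(b)$, I would use the standard fact that $f|U_p$ is again meromorphic with poles supported on cusps, so it suffices to check invariance under the generators of $\G_0(p^\beta N)$ where $\beta=\max\{1,\alpha-1\}$. The operator $U_p$ can be rewritten via $U_p = p^{k/2-1}\sum_\mu \,|_k S_\mu$, and $\G_0(p^\beta N)$ acts on the coset space $\{S_\mu \G_0(p^\alpha N)\}$ from the right. The key point is the identity: for $\gamma\in\G_0(p^\beta N)$ and each $\mu$, there is $\mu'$ and $\gamma'\in\G_0(p^\alpha N)$ with $S_\mu\gamma = \gamma' S_{\mu'}$, again with $\mu\mapsto\mu'$ a permutation. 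When $\alpha\ge 2$ this requires $\gamma$ to have lower-left entry divisible by $p^{\alpha-1}N$, matching $\beta=\alpha-1$; when $\alpha\le 1$ one needs lower-left entry divisible by $pN$, matching $\beta=1$ (the extra factor of $p$ appears because $S_\mu$ has determinant $p$ and "lowers the level" by one power of $p$, but cannot go below $p^1$). Summing over $\mu$ and using $f|_k\gamma' = f$ gives $(f|U_p)|_k\gamma = f|U_p$, which is the desired modularity.

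The main obstacle, and the step deserving the most care, is the bookkeeping of the two coset identities $S_\mu W_e = \gamma_\mu W_e S_{\nu(\mu)}$ and $S_\mu\gamma = \gamma' S_{\mu'}$: one must check that the auxiliary matrices $\gamma_\mu,\gamma'$ genuinely lie in $\G_0$ at the correct level (integrality of entries and the lower-left divisibility), and that the induced maps on the index set $\{0,\dots,p-1\}$ are bijections so that no coset is double-counted or omitted. Since these are elementary $2\times 2$ integer-matrix manipulations — exactly the computations underlying the classical theory of $U_p$ on $\G_0(N)$ in \cite{Lehner70} — I would cite that reference for the detailed verification and only indicate the role of the hypothesis $p\nmid N$ and the origin of the bound $\beta=\max\{1,\alpha-1\}$.
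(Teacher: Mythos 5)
Your proposal is correct, and it matches the paper's treatment: the paper does not prove Lemma~\ref{lemma:up_basic_facts} but simply cites the standard facts from \cite[Section~2]{Lehner70}, which rest on exactly the coset identities $S_\mu W_e = \gamma_\mu W_e S_{\nu(\mu)}$ and $S_\mu\gamma = \gamma' S_{\mu'}$ (with $\mu\mapsto\nu(\mu)$, $\mu\mapsto\mu'$ permutations) that you sketch and then likewise delegate to that reference. Your bookkeeping of where $p\nmid N$, $p\nmid e$, and the bound $\beta=\max\{1,\alpha-1\}$ enter is accurate, so the outline is a faithful account of the classical argument.
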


 The following lemma extends these facts from $\G_0(N)$ to $n|h$-type groups.
\begin{Lemma}\label{lemma:up_nh_facts} Let $p\nmid nh$ be prime, and let $f$ be a weight $k$ meromorphic modular form on a~$p^\alpha n|h$-type group $\G$ with eigenvalue map $\eta$.
 \begin{enumerate}\itemsep=0pt
 \item[$(a)$] Let
 \begin{gather*}x = \Matrix{1 & 1/h \\ 0 & 1}\qquad\text{and}\qquad y = \Matrix{1 & 0 \\ p^\alpha n & 1}\end{gather*}
 be the matrices of Section~{\rm \ref{ssec:T-nhtype}}. Then
 \begin{align*}
 f|U_p|_kx^p &= f|_kx|U_p \qquad \text{and} \qquad f|U_p|_ky^p = f|_ky|U_p.
 \end{align*}
 \item[$(b)$] Let $\G'$ be the $p^\beta n|h$-type group such that $\beta = \max\{1,\alpha - 1\}$ and $\al{e}{\G'}$ if and only if $\al{e}{\G}$ and $p\nmid e$. Let $\eta'$ be an be an eigenvalue map on $\G'$ such that
 \begin{gather*}\eta'(x^p) = \eta(x),\qquad \eta'(y^p) = \eta(y),q\quad \eta'(W_e) = \eta(W_e)\qquad \text{for} \quad \al{e}{\G'}.\end{gather*}
 Then $f|U_p$ is on $\G'$ with eigenvalue map $\eta'$.
 \end{enumerate}
\end{Lemma}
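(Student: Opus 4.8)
The plan is to prove part (a) by direct manipulations with the cosets $S_\mu = \left(\begin{smallmatrix}1 & \mu\\0 & p\end{smallmatrix}\right)$, $0\le\mu<p$, from the definition~(\ref{eqn:up_defn}) of $U_p$, and then to deduce part (b) from part (a), Lemma~\ref{lemma:up_basic_facts}, and the integrality of the $q$-expansion of $f|U_p$; throughout I use that $f$ is invariant under $\G_0(p^\alpha nh)$, since $\G_0(p^\alpha nh)\subseteq\ker\eta$. For the $x$-identity in (a), one checks directly that $S_\mu x^p = x S_\mu$ (both sides equal $\left(\begin{smallmatrix}1 & \mu + p/h\\0 & p\end{smallmatrix}\right)$), so summing over $\mu$ and applying~(\ref{eqn:up_defn}) gives $f|U_p|_k x^p = f|_k x|U_p$ with no further input. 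For the $y$-identity, I would set $\g_\mu := (S_\mu y^p)(y S_\mu)^{-1}$ and compute its four entries explicitly; the outcome is an integral matrix of determinant $1$ whose lower-left entry equals $p^\alpha n\big(p^2 - 1 + \mu p^{\alpha+1}n\big)$. This is divisible by $p^\alpha nh$, because $p^2\equiv 1\Mod{h}$ (valid since $h\mid 24$ and $p\nmid h$) and $h\mid n$ (built into the definition of an $n|h$-type group) together make $h$ divide both $p^2-1$ and $\mu p^{\alpha+1}n$. Hence $\g_\mu\in\G_0(p^\alpha nh)\subseteq\ker\eta$, so $f|_k\g_\mu = f$, and summing $f|_k\big(S_\mu y^p\big) = f|_k\big(\g_\mu y S_\mu\big) = f|_k\big(y S_\mu\big)$ over $\mu$ gives $f|U_p|_k y^p = f|_k y|U_p$. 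Since $x,y\in\G$, the right-hand sides simplify to $\eta(x)\,f|U_p$ and $\eta(y)\,f|U_p$ respectively.

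For part (b), Lemma~\ref{lemma:up_basic_facts}(b) (with $N=nh$) shows $f|U_p$ is modular on $\G_0(p^\beta nh)$, so in particular $\G_0(p^\beta nh)\subseteq\ker\eta'$ as $\eta'$ being an eigenvalue map requires. Since $\G' = \gen{x, y', W_e\colon \al{e}{\G'}}$ with $y' = \left(\begin{smallmatrix}1 & 0\\p^\beta n & 1\end{smallmatrix}\right)$, it suffices to verify $f|U_p|_k\g = \eta'(\g)\,f|U_p$ for $\g$ among these generators. For $\g = W_e$: the condition $\al{e}{\G'}$ forces $p\nmid e$ and $e\exact nh$, so Lemma~\ref{lemma:up_basic_facts}(a) gives $f|U_p|_k W_e = f|_k W_e|U_p$, while $W_e\in\G$ gives $f|_k W_e = \eta(W_e)f$, and $\eta'(W_e) = \eta(W_e)$ by hypothesis. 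For $\g = x$: the $q$-expansion of $f|U_p$ has integral exponents and $x^h = \left(\begin{smallmatrix}1 & 1\\0 & 1\end{smallmatrix}\right)$, so $f|U_p|_k x^h = f|U_p$; combining this with $f|U_p|_k x^p = \eta(x)\,f|U_p$ from part (a), writing $1 = ap + bh$, and using that $x^p$ and $x^h$ commute, we get $f|U_p|_k x = \eta(x)^a\,f|U_p$; finally $\eta(x)^a = \eta'(x)^{ap+bh} = \big(\eta'(x)^p\big)^a\big(\eta'(x)^h\big)^b = \eta'(x)$, using $\eta'(x)^p = \eta(x)$ and $\eta'(x)^h = 1$ (since $x^h\in\G_0(p^\beta nh)$). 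The case $\g = y'$ is handled the same way: $(y')^h\in\G_0(p^\beta nh)$ gives $f|U_p|_k (y')^h = f|U_p$, and $(y')^{p^{\alpha+1-\beta}} = y^p$ together with part (a) gives $f|U_p|_k (y')^{p^{\alpha+1-\beta}} = \eta(y)\,f|U_p$, so since $\gcd\big(p^{\alpha+1-\beta}, h\big) = 1$ the same B\'ezout argument gives $f|U_p|_k y' = \eta'(y')\,f|U_p$, using $\eta'(y^p) = \eta(y)$ and $\eta'(y')^h = 1$.

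I expect the only real obstacle to be bookkeeping rather than anything conceptual: carrying out the $\g_\mu$ computation in part (a) correctly — in particular confirming that $\g_\mu$ is integral of determinant one, and not merely that its lower-left entry is divisible by $p^\alpha nh$ — and, in part (b), keeping straight which of the matrices $x$, $y$, $y'$ and which of their powers are being slashed, and checking in each case that the root of unity produced is exactly the prescribed value of $\eta'$ (which, as above, reduces to the relations $\eta'(x)^h = \eta'(y')^h = 1$ coming from $\G_0(p^\beta nh)\subseteq\ker\eta'$ together with the defining properties $\eta'(x^p) = \eta(x)$ and $\eta'(y^p) = \eta(y)$).
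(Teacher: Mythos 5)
Your proposal is correct and takes essentially the same route as the paper: part (a) is proved exactly as there, via $S_\mu x^p = xS_\mu$ and $S_\mu y^p = \g_\mu\, y S_\mu$ with $\g_\mu\in\G_0(p^\alpha nh)$ (using $p^2\equiv 1\Mod{h}$ and $h\mid n$), and part (b) is deduced from part (a) together with Lemma~\ref{lemma:up_basic_facts}. Your B\'ezout argument recovering the action of $x$ and $y'$ from that of $x^p$ and $y^p$ (via $q$-integrality, $x^h$, $(y')^h\in\G_0(p^\beta nh)$, and $\gcd(p,h)=1$) is simply an explicit justification of the paper's one-line claim that, since $p\nmid h$ and $\beta\le\alpha+1$, the matrices $x^p$ and $y^p$ generate $\G_0\big(p^\beta n|h\big)$ modulo $\G_0\big(p^\beta nh\big)$, so the two arguments coincide in substance.
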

\begin{proof}For $0\le \mu\le p-1$, let $S_\mu = \left(\begin{smallmatrix}1 & \mu \\ 0 &p\end{smallmatrix}\right)$ denote the matrix appearing in the definition~(\ref{eqn:up_defn}) of $U_p$. The first identity of part (a) follows from the equation $S_\mu x^p = xS_\mu$. The second identity follows from the equation
 \begin{gather}\label{eqn:y_commute}S_\mu y^p = \begin{pmatrix}* & * \\ p^\alpha n(-1 + p^2 + p^{\alpha+1} n\mu) & * \end{pmatrix}y S_\mu,\end{gather}
 where each $*$ is an integer such that the matrix has determinant $1$. Since $p\nmid h$, we have that $p^2\equiv 1\pmod{h}$ (since $h$ is a divisor of $24$), so the matrix appearing in (\ref{eqn:y_commute}) is in $\G_0(p^\alpha nh)$ and therefore fixes $f$.

 For part (b), note that since $p\nmid h$ and $\beta\le \alpha + 1$, the matrices $x^p$ and $y^p$ generate $\G_0\big(p^\beta n|h\big)$. Thus part (b) follows from part (a) and Lemma~\ref{lemma:up_basic_facts}.
\end{proof}

Let $\sigma_a\colon \mu\mapsto\mu^a$ be an endomorphism of $\bm\mu_{2h}$. We set $\eta^{\sigma_a} = \sigma_a \circ \eta$. The preceding lemma says that if $\G$ is an $pn|h$-type group with $p\nmid nh$ then $U_p$ is a map $M_k(\G,\eta)\rightarrow M_k(\G,\eta^{\sigma_p})$.

Analogous to the decomposition $M_k(\Gamma_1(N)) = \bigoplus_\chi M_k(\Gamma_0(N),\chi)$ over mod $N$ Dirichlet characters $\chi$, we have the following decomposition of $M_k(\G_\eta)$.
\begin{Lemma}\label{lemma:eigenmap_decomp} Let $\G$ be an $n|h$-type group with eigenvalue map $\eta$, such that $\im \eta = \bm\mu_{h'}\le \bm\mu_{2h}$. There is a decomposition
 \begin{gather*}M_k(\G_\eta) = \bigoplus_{\substack{\eta'\colon \G\rightarrow \bm\mu_{2h}\\ \ker\eta'\supseteq\G_\eta}} M_k(\G,\eta') = \bigoplus_{a = 0}^{h' - 1} M_k\big(\G,\eta^{\sigma_{a}}\big).\end{gather*}
\end{Lemma}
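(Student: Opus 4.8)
The plan is to realize $M_k(\G_\eta)$ as a representation of the finite abelian group $\G/\G_\eta$ and to read off the decomposition from the isotypic projections.

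First I would set up the action. Since $\eta$ is a homomorphism, $\G_\eta=\ker\eta$ is normal in $\G$ and contains $\G_0(nh)$, and $\eta$ induces an isomorphism $\G/\G_\eta\cong\im\eta=\bm\mu_{h'}$; in particular $H=\G/\G_\eta$ is cyclic of order $h'$. Because $\G$ normalizes $\G_0(nh)$ (Section~\ref{ssec:T-nhtype}) and the weight-$k$ slash operator preserves holomorphy and weight-$k$ modularity on $\G_0(nh)$, the assignment $\gamma\mapsto(f\mapsto f|_k\gamma)$ is a right action of $\G$ on the space of weight-$k$ holomorphic modular forms on $\G_0(nh)$. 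As $\G_\eta$ is normal in $\G$, this action preserves the subspace $V=M_k(\G_\eta)$ of $\G_\eta$-invariants, and $\G_\eta$ acts trivially on $V$, so we obtain a linear action of $H$ on $V$.

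Next I would apply the standard isotypic decomposition for the finite abelian group $H$ over $\C$. For each character $\psi\colon H\to\C^\times$ the operator $e_\psi\colon f\mapsto\frac{1}{|H|}\sum_{\bar\gamma\in H}\overline{\psi(\bar\gamma)}\,f|_k\gamma$ is well defined on $V$ (the choice of coset representative $\gamma$ of $\bar\gamma$ is irrelevant, since $f\in V$ is fixed by $\G_\eta$), and by orthogonality of the characters of $H$ the operators $e_\psi$ are orthogonal idempotents summing to the identity. Hence $V=\bigoplus_\psi e_\psi V$ with $e_\psi V=\{f\in V: f|_k\gamma=\psi(\bar\gamma)f\ \text{for all}\ \gamma\in\G\}$; no finiteness of $\dim V$ is needed. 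It remains to match indices and summands. A character of $H=\G/\G_\eta$ is exactly a homomorphism $\G\to\C^\times$ trivial on $\G_\eta$; since $|H|=h'$ any such homomorphism takes values in $\bm\mu_{h'}\subseteq\bm\mu_{2h}$ and, because $\G_0(nh)\subseteq\G_\eta$, also kills $\G_0(nh)$. So these homomorphisms are precisely the eigenvalue maps $\eta'\colon\G\to\bm\mu_{2h}$ with $\ker\eta'\supseteq\G_\eta$, and if $\psi$ corresponds to $\eta'$ then $e_\psi V=M_k(\G,\eta')$ by the very definition of the latter. This yields the first equality.

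For the second equality I would list these characters: composing the isomorphism $\G/\G_\eta\cong\bm\mu_{h'}$ with the endomorphisms $\sigma_a\colon\mu\mapsto\mu^a$ of $\bm\mu_{2h}$, for $a=0,1,\dots,h'-1$, gives the homomorphisms $\eta^{\sigma_a}=\sigma_a\circ\eta$, all of which kill $\G_\eta$. They are pairwise distinct, since two maps $\bm\mu_{h'}\to\bm\mu_{2h}$ of the form $\mu\mapsto\mu^a$ agree only when the exponents are congruent modulo $h'$, and there are $h'$ of them, hence they constitute all the characters of $H$. Substituting $\eta'=\eta^{\sigma_a}$ into the first decomposition gives the second. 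The only steps needing care are the normality of $\G_\eta$ in $\G$, so that the quotient acts at all, and the bookkeeping identification of characters of $\G/\G_\eta$ with eigenvalue maps trivial on $\G_\eta$; I do not expect either to be a genuine obstacle.
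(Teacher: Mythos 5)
Your argument is correct and is essentially the paper's own proof: the paper simply observes that $\G/\G_\eta\cong\im\eta$ is finite abelian, so its action on $M_k(\G_\eta)$ by the slash operator can be simultaneously diagonalized, which is exactly the isotypic decomposition you spell out with the idempotents $e_\psi$. Your identification of the characters of $\G/\G_\eta$ with the eigenvalue maps $\eta^{\sigma_a}$, $0\le a\le h'-1$, is the same bookkeeping left implicit in the paper.
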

\begin{proof} Since $\G/\G_\eta \cong \im\eta$ is finite and abelian, the action of $\G$ on $M_k(\G_\eta)$ can be simultaneously diagonalized.
\end{proof}

Let $\G$ be a $p^\alpha n|h$-type group with eigenvalue map $\eta$ such that $\G_\eta$ is genus zero and \mbox{$p\nmid nh$}. Lemma~\ref{lemma:up_nh_facts} tells us under which group $\mt|U_p$ is modular. We know that $\mt|U_p$ is weakly holomorphic, and next characterize the cusps at which it may be unbounded. We first recall that a set of representatives for the cusps of $\G_0(N)$ is given by
\begin{gather*}\left\{\frac{a}{b} \colon b|N,a\in\Z\right\}/{\sim}, \qquad\text{where}\quad \frac{a}{b}\sim\frac{c}{d}\iff b = d \quad \text{and} \quad a\equiv c\Mod{\gcd(b,N/b)}.\end{gather*}
Moreover, if two cusps $\frac{a}{b}$ and $\frac{c}{d}$ (not necessarily representatives of the form above) are equivalent under $\G_0(N)$ and $\gcd(a,b) = \gcd(c,d) = 1$, then $\gcd(b,N) = \gcd(d,N)$. Both of these facts follow from \cite[Proposition~3.8.3]{Diamond_Shurman}.

In what follows, if $\G\le \GL_2^+(\R)$ is commensurable with $\SL_2(\Z)$ and $s,s'\in\mathbb{P}^1(\Q)$ are cusps, then we write
\begin{gather*}s\stackrel{\G}{\sim}s'\end{gather*}
to mean that $s$ and $s'$ are equivalent under $\G$. If $\G = \G_0(N)$ then we simply write $s\stackrel{N}{\sim}s'$.

\begin{Lemma}\label{lemma:hauptmodul_poles} Let $p\nmid nh$ be prime, and let $\G$ be a $p^\alpha n|h$-type group with eigenvalue map~$\eta$. Let~$\G'$ and~$\eta'$ be the $p^\beta n|h$-type group and eigenvalue map defined in Lemma~{\rm \ref{lemma:up_nh_facts}(b)}. Suppose $f$ is a weakly holomorphic modular form on $\G$ with eigenvalue map $\eta$, so that $f|U_p$ is on $\G'$ with eigenvalue map $\eta'$ by Lemma~{\rm \ref{lemma:up_nh_facts}}.
 \begin{enumerate}\itemsep=0pt
 \item[$(a)$] If $f$ is bounded away from $\infty$ then the poles of $f|U_p$ are supported on the cusps
 \begin{gather*}\{\infty\}\cup \big\{s \colon s\stackrel{p}{\sim}0\big\}\quad \text{of} \quad \G'_{\eta'}.\end{gather*}
 \item[$(b)$] Suppose that $p\nmid e$ for all $\al{e}{\G}$. If the poles of $f$ are supported on $\{\infty\}\cup\{s \colon s\stackrel{p}{\sim}0\}$ then the same is true of~$f|U_p$.
 \item[$(c)$] Suppose that $\alpha\ge 2$ and $p\nmid e$ for all $\al{e}{\G}$. If $f$ is bounded away from $\infty$ and $f|U_p$ is bounded at $\infty$, then $f|U_p$ is constant.
 \end{enumerate}
\end{Lemma}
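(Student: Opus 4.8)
I would prove all three parts together, since the computations overlap; the whole thing reduces to bookkeeping on cusps. Recall $f|U_p=\frac1p\sum_{\m=0}^{p-1}f(S_\m\t)$ for the matrices $S_\m$ of~(\ref{eqn:up_defn}), so $f|U_p$ can have a pole at a cusp $s$ only if $f$ has a pole at $S_\m s$ for some $\m$. By Lemma~\ref{lemma:up_nh_facts}, $f|U_p$ is on $\G'$ with eigenvalue map $\eta'$, so its pole set is a union of $\G'_{\eta'}$-orbits of cusps; likewise, since $f|_k\g=\eta(\g)f$ for $\g\in\G$, the pole set of $f$ is $\G$-stable, hence a union of $\G$-orbits. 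When $f$ is bounded away from $\infty$ (parts (a) and (c)) its poles therefore lie in the $\G$-orbit of $\infty$, which by definition of an $n|h$-type group is $\bigcup_{w_e\subset\G}\{\text{cusps }\G_0(p^\a nh)\text{-equivalent to }w_e\infty\}$. Two cusps of $\G_0(M)$ are equivalent precisely when the gcds of their reduced denominators with $M$ agree (\cite[Proposition~3.8.3]{Diamond_Shurman}, as recalled before the lemma), and a one-line computation shows that $e\exact p^\a nh$ forces $v_p(e)\in\{0,\a\}$, with $\gcd(\operatorname{denom}(w_e\infty),p^\a nh)$ having $p$-part $p^\a$ when $p\nmid e$ and $p$-part $1$ when $p^\a\mid e$. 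The one nontrivial input used repeatedly: if $w_e\subset\G$ and $p\nmid e$, then $w_e\subset\G'$ by the construction of $\G'$ in Lemma~\ref{lemma:up_nh_facts}(b), and on $\G_0(p^\b nh)$ the coset $w_e$ carries any cusp $s$ with $\gcd(\operatorname{denom}(s),p^\b nh)=p^\b(nh/e)$ to the cusp $\infty$.

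\textbf{Part (a).} Write $s=a/c$ in lowest terms, so $S_\m s=(a+\m c)/(pc)$, whose reduced denominator is $c$ if $p\mid a+\m c$ and $pc$ otherwise. Matching $\gcd(\operatorname{denom}(S_\m s),p^\a nh)$ against the list above and running a short case analysis on $v_p(c)$ gives: either $p\nmid c$, so that $s\xsim{p}0$; or $p\mid c$, in which case $S_\m s$ is $\G_0(p^\a nh)$-equivalent to $w_e\infty$ for some $w_e\subset\G$ with $p\nmid e$, one finds $\gcd(\operatorname{denom}(s),p^\b nh)=p^\b(nh/e)$, and the coset $w_e\subset\G'$ carries $s$ to $\infty$. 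Hence every pole of $f|U_p$ lies at $\infty$ or at a cusp $\xsim{p}0$ of $\G'_{\eta'}$.

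\textbf{Parts (b) and (c).} Part (b) is the same computation with a smaller pole set for $f$: its poles now lie over cusps that are $\xsim{p}0$ or equal to $\infty$. A pole over a cusp $\xsim{p}0$ can only be reached in the branch $p\mid a+\m c$ (the other branch forces $p\mid\operatorname{denom}(S_\m s)$), which yields $s\xsim{p}0$; a pole over $\infty$ is handled exactly as the $p\mid c$ subcase of part (a), yielding $s\xsim{\G'}\infty$. For part (c) we have $\b=\a-1\ge1$; by part (a) the poles of $f|U_p$ already lie in $\{\infty\}\cup\{s\colon s\xsim{p}0\}$, so it suffices to rule out the cusps $\xsim{p}0$. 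Since no $w_e\subset\G$ has $p\mid e$, the poles of $f$ lie over cusps whose denominator has gcd with $p^\a nh$ of $p$-part exactly $p^\a$; feeding such a cusp through the relation $S_\m s\in\{\text{poles of }f\}$, the branch $p\mid a+\m c$ cannot occur (there $p\nmid c$, giving $p$-part $p^0$), while the branch $p\nmid a+\m c$ forces $v_p(c)\ge\a-1\ge1$, so $p\mid c$ and, as in part (a), $s\xsim{\G'}\infty$. Thus $f|U_p$ has no poles except possibly at $\infty$; since it is also bounded at $\infty$, it is holomorphic on all of $\G'_{\eta'}\lmod\H^*$, hence constant.

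\textbf{Main obstacle.} The only real difficulty is the cusp bookkeeping in part (a): upgrading ``$p\mid c$'' to ``$s\xsim{\G'}\infty$'' rather than merely ``$s\xsim{p}\infty$''. This rests on the Atkin--Lehner coset $w_e$ that produces the pole of $f$ at $S_\m s$ surviving into $\G'$ and carrying $s$ to $\infty$, both of which depend on the precise description of $\G'$ and its action on cusps in Lemma~\ref{lemma:up_nh_facts}(b); once that is in place, the remainder is a finite check on $p$-adic valuations of denominators.
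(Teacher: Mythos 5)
Your overall strategy (tracking poles through the correspondence $s\mapsto S_\mu s$ and doing cusp bookkeeping) is the same kind of argument the paper makes, but as written it proves a strictly weaker statement than the lemma. The conclusion of part (a) is equivalence to $\infty$ under $\G'_{\eta'}$, and your argument never touches the eigenvalue maps: you start from the fact that the pole set of $f$ is contained in the \emph{$\G$-orbit} of $\infty$, and your ``upgrade'' step only produces a single Atkin--Lehner coset $w_e\subset\G'$ carrying $s$ to $\infty$, i.e.\ $s\xsim{\G'}\infty$. Since $\G'_{\eta'}$ can be a proper subgroup of $\G'$ (e.g.\ when $\eta'(W_e)=-1$ for some $\al{e}{\G'}$), the $\G'$-orbit of $\infty$ can be strictly larger than the $\G'_{\eta'}$-orbit, and cusps in the difference are neither $\xsim{p}0$ nor $\xsim{\G'_{\eta'}}\infty$; so ``$s\xsim{\G'}\infty$ or $s\xsim{p}0$'' does not yield the stated conclusion. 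The paper avoids this by using the finer input that the poles of $f$ lie in the $\G_\eta$-orbit of $\infty$ (so $S_\mu s = A\cdot\infty$ with $A\in\G_\eta$ a word in $x$, $y$, $W_e$), and then rewriting $S_\mu^{-1}A = A'S_\nu^{-1}V$ with $V\in\G_0(p^\alpha nh)$, where $A'$ is obtained by replacing $x\mapsto x^p$, $y\mapsto y^p$; the defining compatibilities $\eta'(x^p)=\eta(x)$, $\eta'(y^p)=\eta(y)$, $\eta'(W_e)=\eta(W_e)$ then put $A'$ in $\G'_{\eta'}$, which is exactly what delivers the subscript $\eta'$ in the conclusion. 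Your proposal has no mechanism playing this role, and this is precisely the point you flag as the ``main obstacle''; resolving it requires carrying $\eta$ through the computation, not only the cusp denominators.

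A second, more repairable, problem is that your gcd bookkeeping is really the $h=1$ picture. You describe the $\G$-orbit of $\infty$ as a union of $\G_0(p^\alpha nh)$-classes of cusps $w_e\infty$ with prescribed $\gcd(\operatorname{denom}(\cdot),p^\alpha nh)$, but for $h>1$ the extra generators of $\G_0(p^\alpha n|h)$ already move $\infty$ out of its $\G_0(p^\alpha nh)$-class (e.g.\ $y\cdot\infty = 1/(p^\alpha n)$, whose gcd with the level is $p^\alpha n$, not $p^\alpha nh$), so the list of admissible gcd values and the claim ``$w_e$ carries any cusp with $\gcd=p^\beta(nh/e)$ to $\infty$'' need the $h$-divisor corrections (and the latter also silently uses that such cusps form a single $\G_0$-class, which is true because $e$ is an exact divisor but should be said). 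These can be fixed, for instance by conjugating by $\tau\mapsto h\tau$ or by arguing with cosets of $\G_0(p^\alpha n|h)$ as the paper does; but together with the $\G'_{\eta'}$ issue above, the proof as proposed does not establish the lemma in the form stated.
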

\begin{proof} We begin with part (a). Suppose that $f$ is bounded away from $\infty$, and suppose that~$f|U_p$ is unbounded at the cusp $s$. If $s\stackrel{p}{\sim}0$ then we are done. Thus we may assume that $s\stackrel{p}{\sim}\infty$. Also, by (\ref{eqn:up_defn}), we must have $S_\mu\cdot s\stackrel{\G_\eta}{\sim}\infty$ for some $\mu$. Equivalently, $s\stackrel{p^\alpha nh}{\sim}S_\mu^{-1}A\cdot \infty$ for some $A$ that can be expressed as a word in the matrices
 \begin{gather*}x = \Matrix{1 & 1/h \\ 0 & 1},\qquad y = \Matrix{1 & 0 \\ p^\alpha n & 1}, \qquad \text{and}\qquad W_e\in \G_\eta\end{gather*}
 of Section~\ref{ssec:T-nhtype}. Since $W_e$ normalizes $\Gamma_0(p^\alpha n|h)$ (see \cite[Theorem~2.7]{Ferenbaugh93}), we may write all of the~$W_e$'s on the right of~$A$. By cancelling $W_{p^\alpha}$'s, we can moreover demand that $p\nmid e$ for all $W_e$ appearing in $A$, for otherwise we would have $A\cdot \infty \stackrel{p}\sim 0$ while $S_\mu\cdot s\stackrel{p}{\sim}\infty$. As in the proof of Lemma~\ref{lemma:up_nh_facts}, we can write $S_\mu^{-1}A = A'S_\mu^{-1} V$ for some $V\in \G_0(p^\alpha nh)$, where $A'$ is obtained from~$A$ by replacing each $x$ with $x^p$ and each $y$ with $y^p$. Hence $s\stackrel{p^\alpha nh}{\sim}A'S_\nu^{-1}\cdot \infty$.

If $\alpha\le 1$ and $\beta = 1$ then either $S_\nu^{-1}\cdot \infty\stackrel{p^\beta nh}{\sim}\infty$ or $\frac{1}{nh}$. But since $A'\cdot \frac{1}{nh}\stackrel{p}{\sim}0$, we must have $S_\nu^{-1}\cdot\infty\stackrel{p^\beta nh}{\sim}\infty$. Hence $s\stackrel{p^\beta nh}{\sim}A'\cdot\infty \stackrel{\G'_{\eta'}}{\sim}\infty$, as desired. The case $\alpha\ge 2$ is dealt with similarly, completing the proof of~(a).

 A similar argument gives part (b), where we must now use the fact that $p\nmid e$ for $\al{e}{\G}$ to show that $p\nmid e$ for all $W_e$ appearing in $A$. For part (c), one finds that $f|U_p$ may only have a~pole at the cusp $\infty$; however, since it does not have a pole here by hypothesis, $f|U_p$ is bounded everywhere and hence constant.
\end{proof}

\begin{Remark}\label{rmk:hauptmodul_killed} Lemma~\ref{lemma:hauptmodul_poles}(c) delivers a class of Hauptmoduln $\mathcal{T}$ for which $\mathcal{T}|U_p=0$. For the Hauptmoduln from monstrous moonshine to which this applies, this property also follows from \cite[Lemma~3.2]{Larson16}. Furthermore, if $\G$ is a $n|h$-type group and $\eta$ is an eigenvalue map with $\eta(x) = {\rm e}^{2\pi {\rm i} m/h}$, then any meromorphic modular form $f=\sum\limits_{n=m}^\infty a(n) q^n$ on $\G$ with eigenvalue map~$\eta$ has $a(n)=0$ if $n\not\equiv m\Mod{h}$, since $x$ sends $q\mapsto {\rm e}^{2\pi {\rm i}m/h} q$.
 In particular, if $h\equiv0\Mod{p}$ and $\mt$ is the Hautpmodul on $\G_\lambda$, then $f|U_p = 0$.

Inspection of Table~\ref{tbl:full_annihilation} shows that the only monster Hauptmoduln with $\mt|U_p = 0$ are those with $h\equiv0\Mod{p}$ and those coming from Lemma~\ref{lemma:hauptmodul_poles}(c).
\end{Remark}

In Section~\ref{sec:serre} we will need a modular form $g$ on an $n|h$-type group $\G$ such that the zeros of $g$ can cancel the poles of $\mt|U_p$, whose locations were just determined. We will also need $g$ to have certain properties modulo $p$.

To construct $g$, we will use the \textit{modular discriminant} $\Delta(\t) = q\prod\limits_{n\ge 1}(1 - q^n)^{24}$. If a modular form $f=\sum\limits_{n=m}^\infty a(n) q^n$ has rational coefficients, we set $v_p(f)=\inf\limits_{n} v_p(a_n)$ where $v_p(a_n)=\sup\{r\in\Z\colon p^r\divides a_n \}$.
\begin{Lemma} \label{lemma:delta_quotient} Let $\G$ be a $pn|h$-type group where $p\nmid nh$ is prime and $p\nmid e$ for all $\al{e}{\G}$. Let $m = \#\AL(\G)$ and set $a = 12m(p-1)$. Then there is a modular form $g$ on $\G$ of weight $a$ with rational coefficients such that
 \begin{enumerate}\itemsep=0pt
 \item[$(a)$] $g \equiv 1 \Mod{p}$,
 \item[$(b)$] $v_p(g |_a W_p) \ge 6m(p + 1)$,
 \item[$(c)$] As a function on $\G_0(pnh)$, $g$ vanishes to order $\ge m\big(p^2 - 1\big)$ at every cusp equivalent to $0$ under $\G_0(p)$.
 \end{enumerate}
 Specifically, $g$ can be chosen to be
 \begin{gather*}\prod_{\al{e}{\G}}\left.\frac{\Delta(h\t)^p}{\Delta(ph\t)}\right|_{12(p-1)}W_e.\end{gather*}
\end{Lemma}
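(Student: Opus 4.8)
The plan is to take $g$ to be the displayed product and verify each claim directly. Write $\phi(\tau)=\Delta(h\tau)^p/\Delta(ph\tau)$, a weight $12(p-1)$ object, so that $g=\prod_{e\in\AL(\G)}\phi|_{12(p-1)}W_e$ has weight $a=12m(p-1)$ with $m=\#\AL(\G)$. The single computation driving everything is the Atkin--Lehner action on $\Delta$-quotients: since $\Delta$ is modular on $\SL_2(\Z)$, formula~(\ref{eqn:exact_divisor_action}) with $N=pnh$ gives, for $e\in\AL(\G)$ (so $e\exact nh$, $p\nmid e$), both $\Delta(h\tau)|_{12}W_e=(m_e/h)^6\Delta(m_e\tau)$ and $\Delta(ph\tau)|_{12}W_e=(m_e/h)^6\Delta(pm_e\tau)$, where $m_e:=h*e$ and where, because $h\mid n$ and $e\exact nh$, a quick prime-by-prime check shows $m_e$ is a positive integer with $m_e/h\in\Z$ and $p\nmid m_e$. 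Hence
\begin{gather*}
\phi|_{12(p-1)}W_e=(m_e/h)^{6(p-1)}\,\frac{\Delta(m_e\tau)^p}{\Delta(pm_e\tau)},
\end{gather*}
a weakly holomorphic form on $\G_0(pnh)$ with rational $q$-expansion. Since $\Delta$ is nonvanishing on $\H$, so is each $\phi|_{12(p-1)}W_e$; and $\phi\in M_{12(p-1)}(\G_0(pn|h))$ because $\phi$ is a power series in $q^h$ (hence fixed by $x$) and a direct $\SL_2(\Z)$ computation shows $\Delta(h\tau)$ and $\Delta(ph\tau)$ are fixed by $y$ (using $h\mid n$). As each $W_e$ normalizes $\G_0(pn|h)$ (Ferenbaugh) and $*$ makes $\AL(\G)$ a group with $W_eW_f$ representing $W_{e*f}$ modulo $\G_0(pnh)$, it follows that $g$ is invariant under $\G=\langle\G_0(pn|h),W_e:e\in\AL(\G)\rangle$ --- slashing by a $W_f$ merely reindexes the product. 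Holomorphy of $g$ at the cusps will come out of part~(c), completing the check that $g\in M_a(\G)$.

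For (a) and~(b) the elementary input is $(1-q^k)^p\equiv1-q^{pk}\pmod p$, which gives $\Delta(M\tau)^p/\Delta(pM\tau)=\prod_{k\ge1}(1-q^{Mk})^{24p}/(1-q^{pMk})^{24}\equiv1\pmod p$ for every positive integer $M$. Taking $M=m_e$ and noting $(m_e/h)^{6(p-1)}=((m_e/h)^{p-1})^6\equiv1\pmod p$ by Fermat (as $m_e/h$ is a $p$-adic unit), each factor $\phi|_{12(p-1)}W_e\equiv1\pmod p$, so $g\equiv1\pmod p$, giving~(a). For~(b), since slashing by $W_p$ distributes across the weight-homogeneous factors, $g|_aW_p=\prod_{e\in\AL(\G)}\phi|_{12(p-1)}(W_eW_p)=\prod_e\phi|_{12(p-1)}W_{ep}$, using $W_eW_p\equiv W_{ep}$ and the $\G_0(pnh)$-invariance of $\phi$. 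Running~(\ref{eqn:exact_divisor_action}) again with $ep$ in place of $e$ --- now $h*(ep)=pm_e$ while $(ph)*(ep)=m_e$ --- yields $\phi|_{12(p-1)}W_{ep}=p^{6(p+1)}(m_e/h)^{6(p-1)}\,\Delta(pm_e\tau)^p/\Delta(m_e\tau)$, and the last quotient lies in $q^{m_e(p^2-1)}\Z[[q]]$; hence $v_p(\phi|_{12(p-1)}W_{ep})\ge6(p+1)$. Summing valuations over the $m$ factors (the Gauss valuation is additive on products) gives $v_p(g|_aW_p)\ge6m(p+1)$.

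The crux is~(c). The cusps $s=a/c$ of $\G_0(pnh)$ (lowest terms, $c\mid pnh$) with $s\stackrel{p}{\sim}0$ are exactly those with $p\nmid c$, so one may take $c\mid nh$. By Ligozat's valuation formula, $\ord_s\Delta(M\tau)=\frac{N}{\gcd(c^2,N)}\cdot\frac{\gcd(c,M)^2}{M}$ with $N=pnh$; since $p\nmid c$ we get $\gcd(c,pm_e)=\gcd(c,m_e)$, so
\begin{gather*}
\ord_s\big(\phi|_{12(p-1)}W_e\big)=p\,\ord_s\Delta(m_e\tau)-\ord_s\Delta(pm_e\tau)=\frac{N}{\gcd(c^2,N)}\cdot\frac{\gcd(c,m_e)^2}{m_e}\cdot\frac{p^2-1}{p}.
\end{gather*}
It then suffices to prove $\frac{N}{\gcd(c^2,N)}\cdot\frac{\gcd(c,m_e)^2}{m_e}\ge p$, i.e.\ (cancelling the factor $p$ from $N$) that $nh\,\gcd(c,m_e)^2\ge\gcd(c^2,nh)\,m_e$. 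I would do this prime by prime: at $\ell$, with $a=v_\ell(h)\le\beta=v_\ell(n)$ (as $h\mid n$), the condition $e\exact nh$ forces $v_\ell(m_e)\in\{a,\beta\}$, in particular $v_\ell(m_e)\le\beta\le v_\ell(nh)$, and a short case split on $v_\ell(c)$ relative to $v_\ell(m_e)$ and $v_\ell(nh)$ gives $v_\ell(nh)+2\min(v_\ell(c),v_\ell(m_e))\ge\min(2v_\ell(c),v_\ell(nh))+v_\ell(m_e)$. Summing $\ord_s(\phi|_{12(p-1)}W_e)\ge p^2-1$ over $e\in\AL(\G)$ gives $\ord_s g\ge m(p^2-1)$. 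The same formula at a cusp with $p\mid c$ gives $\gcd(c,pm_e)=p\gcd(c,m_e)$ and hence vanishing order $0$ there, so $g$ is holomorphic at every cusp --- finishing the proof that $g\in M_a(\G)$. The real obstacle is keeping the Atkin--Lehner action, the $*$-arithmetic, and Ligozat's formula aligned; the only genuine inequality is the elementary prime-by-prime estimate in~(c).
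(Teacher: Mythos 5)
Your proposal is correct and follows essentially the same route as the paper: the same choice of $g$ as a symmetrized product of $\Delta(h\tau)^p/\Delta(ph\tau)$ over Atkin--Lehner translates, the same use of (\ref{eqn:exact_divisor_action}) to compute $\phi|_{12(p-1)}W_e$ and $\phi|_{12(p-1)}W_{pe}$, the same congruence $(1-q^k)^p\equiv 1-q^{pk}\pmod p$ for (a), and the same valuation count $v_p\big(\phi|_{12(p-1)}W_{pe}\big)\ge 6(p+1)$ for (b). The only divergence is in (c): the paper reads the vanishing at cusps equivalent to $0$ under $\Gamma_0(p)$ off the order of $g_p|_{a_p}W_{pe}$ at $\infty$, whereas you verify it cusp-by-cusp with Ligozat's valuation formula and the elementary prime-by-prime inequality $v_\ell(nh)+2\min(v_\ell(c),v_\ell(m_e))\ge\min(2v_\ell(c),v_\ell(nh))+v_\ell(m_e)$ (valid since $h\mid n$ forces $v_\ell(m_e)\le v_\ell(nh)$); this is a more explicit justification of the step the paper states tersely, and it also yields holomorphy at the remaining cusps, so it is a welcome elaboration rather than a different proof.
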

\begin{proof}First, let
 \begin{gather*}g_p(\t) = \frac{\Delta(h \t)^p}{\Delta(p h \t)}\end{gather*}
 and $a_p = 12(p-1)$. Note that $g_p(\tau)$ is invariant under $\G_0(pn|h)$. For any $\al{e}{\G}$
 \begin{gather}\label{eqn:g_hit_with_we}
 (g_p |_{a_p} W_e) (\t)
 = \frac{(\Delta(h \t) |_{12} W_e)^p}{\Delta(p h \t) |_{12} W_e}
 = \left(\frac{h*e}{h}\right)^{6(p-1)}\frac{\Delta((h*e)\t)^p}{\Delta(p(h*e)\t)}
 \end{gather}
 by (\ref{eqn:exact_divisor_action}). On the other hand, we see that
 \begin{gather}\label{eqn:g_hit_with_wp}
 (g_p|_{a_p}W_{pe})(\t)= p^{6(p + 1)}\left(\frac{h*e}{h}\right)^{6(p-1)}\frac{\Delta(p(h*e)\t)^p}{\Delta((h*e)\t)}.
 \end{gather}
 Since $\Delta$ is nonzero on $\H$, (\ref{eqn:g_hit_with_we}) shows that $g_p|_{a_p}W_e$ is a modular form on $\G_0(pnh)$ with rational coefficients. Moreover, (\ref{eqn:g_hit_with_wp}) shows that each $g_p|_{a_p}W_{pe}$ for $\al{e}{\G}$ vanishes to order \mbox{$(h*e)\big(p^2 - 1\big)$} at~$\infty$, so that each $g_p|_{a_p}W_e$ vanishes to order $\ge p^2 - 1$ at the cusps $s\stackrel{p}{\sim}0$.

 Since $(1 - q^n)^p\equiv 1 - q^{np}\Mod{p}$, we see that $g_p|_{a_p}W_e\equiv \big(\frac{h*e}{h}\big)^{6(p-1)}\equiv 1\Mod{p}$. Moreover $v_p(g_p |_{a_p} W_p) = 6(p+1)$. Thus, we may set
 \begin{gather*}g = \prod_{\al{e}{\G}} g_p |_{a_p} W_e,\end{gather*}
 which clearly satisfies the conditions given.
\end{proof}

\begin{Remark}The function in Lemma \ref{lemma:delta_quotient} is chosen for its large order zeroes, in contrast with symmetrizations of the function $g = E_a - p^{a/2}E_a|_aW_p$ of \cite[Lemma~8]{Serre72}. This will be computationally useful in Section~\ref{sec:annihilation}.
\end{Remark}

We conclude with a few tools for working with $q$-expansions of mod $p$ modular forms.

\begin{Lemma}[Sturm's bound \cite{Sturm87}]\label{lemma:sturm}
Let $f\in M_k(\Gamma_0(N))$ with integer coefficients $a_n$. If $p\divides a_n$ for $p\leq (k/12)[\SL_2(\Z) : \Gamma_0(N)]$, then $p\divides a_n$ for all~$n$ .
\end{Lemma}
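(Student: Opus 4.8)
The plan is to reduce the statement (whose hypothesis should read ``$p\mid a_n$ for all $n\le(k/12)[\SL_2(\Z):\Gamma_0(N)]$'') to the case $N=1$, and there invoke the explicit structure of $M_*(\SL_2(\Z))$. Write $m=[\SL_2(\Z):\Gamma_0(N)]$; since $-I\in\Gamma_0(N)$ we may assume $k$ is even. Suppose for contradiction that not every $a_n$ is divisible by $p$, and let $\nu$ be the least index with $p\nmid a_\nu$, so the hypothesis forces $\nu>km/12$.

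\emph{Reduction to level one.} Fix right coset representatives $\gamma_1=I,\gamma_2,\dots,\gamma_m$ for $\Gamma_0(N)$ in $\SL_2(\Z)$ and put $F=\prod_{i=1}^m f|_k\gamma_i$. Right multiplication by any element of $\SL_2(\Z)$ permutes the cosets $\Gamma_0(N)\gamma_i$ and fixes $f$ (the character is trivial), so $F\in M_{km}(\SL_2(\Z))$ is holomorphic; writing $F=\sum b_n q^n$ and letting the bar denote reduction modulo a prime $\mathfrak p\mid p$ of the field generated by the coefficients, we have $\ord(\bar F)\ge\ord(\bar f)=\nu$ as soon as $\bar F\ne0$ (dealt with in the last paragraph).

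\emph{Level one via an echelon basis.} The space $M_\ell(\SL_2(\Z))$ admits an integral ``Miller'' basis $g_0,\dots,g_{d-1}$ with $g_j=q^j+O(q^d)$, where $d=\dim M_\ell(\SL_2(\Z))\le\lfloor\ell/12\rfloor+1$; it is assembled from $\Delta$ and Eisenstein series by clearing leading $q$-coefficients. Hence, for any $G\in M_\ell(\SL_2(\Z))$ with $\mathfrak p$-integral coefficients, the $q^j$-coefficient of $G$ equals its $g_j$-coordinate for $0\le j<d$, so if all coefficients of $G$ in degrees $<d$ are divisible by $\mathfrak p$ then $G\equiv0\pmod{\mathfrak p}$. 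Apply this to $F$ with $\ell=km$: since $d-1\le km/12<\nu\le\ord(\bar F)$, the coefficients $b_0,\dots,b_{d-1}$ all vanish modulo $\mathfrak p$, forcing $\bar F=0$, contradicting $\bar F\ne0$. Therefore $\bar f=0$, i.e.\ $p\mid a_n$ for all $n$.

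\emph{The main obstacle.} The point that needs care is the coefficient ring in the reduction step: the Fourier coefficients of the translates $f|_k\gamma_i$ lie a priori only in $\Q(\zeta_M)$ for suitable $M$, and need not be $p$-integral. When $p\nmid N$ they are $\mathfrak p$-integral for every $\mathfrak p\mid p$, so $F$ has $\mathfrak p$-integral coefficients, and $\bar F\ne0$ follows from $\bar f\ne0$ because reduction is injective and the $\SL_2(\Z)$-translates are conjugate to $f$. When $p\mid N$, however, such coefficients can genuinely carry $p$ in the denominator---already $\Delta(2\tau)|_{12}W_2$ does---and the clean remedy is to bypass this bookkeeping using the integral model of $X_0(N)$ over $\Z$ of Deligne--Rapoport and Katz--Mazur: $\bar f$ is then an honest section of $\omega^{\otimes k}$ on $X_0(N)_{\F_p}$, whose total number of zeros is at most $\deg\omega^{\otimes k}=km/12$, giving $\nu\le km/12$ directly. (As the lemma is stated only for later use, one may instead simply cite Sturm's argument \cite{Sturm87}.) I expect this $p\mid N$ subtlety to be the sole real difficulty; the rest is formal.
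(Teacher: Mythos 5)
The paper does not prove this lemma at all---it is quoted directly from \cite{Sturm87}---so there is no internal argument to compare against; what you supply is the standard proof: correct the garbled hypothesis to ``$p\mid a_n$ for all $n\le (k/12)[\SL_2(\Z):\Gamma_0(N)]$'', norm down to level one by multiplying the slash-translates over coset representatives, and then use the integral echelonized (Miller) basis to see that a level-one form whose coefficients vanish mod $\mathfrak{p}$ below the dimension vanishes identically. For $p\nmid N$ this is complete and correct: the translates $f|_k\gamma_i$ are $\mathfrak{p}$-integral with $\mathfrak{p}$-valuation zero (slashing by $\SL_2(\Z)$ preserves the valuation in this case), the product ring of $q^{1/N}$-expansions over the residue field is a domain, and your dimension count $d-1\le km/12<\nu$ closes the argument.

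The one genuine gap is the case $p\mid N$, which you correctly identify as the main obstacle but do not actually resolve; note that this case matters for the paper, whose Sturm-bound verifications at $p=2,3$ are performed at levels divisible by $p$. The geometric remedy as stated is not yet a proof: when $p\mid N$ the special fibre of the Katz--Mazur model of $X_0(N)$ is reducible (and non-reduced when $p^2\mid N$), and vanishing of the $q$-expansion at $\infty$ only controls $\bar f$ on the irreducible component through the cusp $\infty$, so the relevant quantity is the degree of $\omega^{\otimes k}$ restricted to that component rather than the total degree $km/12$; one must also justify what $\omega$ and its degree mean on the non-smooth fibre. This can be carried out (via the Deligne--Rapoport description of the components, or by trading mod $p$ forms of level divisible by $p$ for forms of $p$-prime level and higher weight and rechecking the bound), but as written it is an appeal to a nontrivial fact rather than an argument. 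Since the paper itself simply cites \cite{Sturm87}, your fallback of doing the same for this case is exactly what the paper does, and the rest of your write-up is sound.
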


We will apply Sturm's bound to weakly holomorphic modular forms after multiplying by a~power of the function from Lemma~\ref{lemma:delta_quotient}. We thus bound the pole orders of $\mt|U_p$.

\begin{Lemma}\label{lemma:pole_bound}Let $f$ be a weakly holomorphic function on $X_0(p^\alpha N)$, where $p\nmid N$, and let $\beta=\max\{\alpha,1\}$. If $r$ is the maximum order of a pole of $f$ on $X_0(p^\alpha N)$, then the poles of $f|U_p$ as a~function on $X_0\big(p^{\beta} nh\big)$ have order at most $rp^2$ when $\alpha=0$, and order at most $rp$ otherwise.
\end{Lemma}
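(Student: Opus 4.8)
The plan is to track the pole orders cusp-by-cusp through the definition $f|U_p = \frac{1}{p}\sum_{\mu=0}^{p-1} f|_k S_\mu$ with $S_\mu = \left(\begin{smallmatrix}1 & \mu \\ 0 & p\end{smallmatrix}\right)$. Fix a cusp $s$ of $X_0(p^\beta nh)$ at which $f|U_p$ is unbounded; by Lemma~\ref{lemma:hauptmodul_poles}(a) (more precisely by the same coset bookkeeping, since $f$ need not be bounded at $\infty$ here) the pole of $f|U_p$ at $s$ can only come from cusps $S_\mu s$ that are $\G_0(p^\alpha N)$-equivalent to one of the poles of $f$. First I would reduce to computing the width of $s$ as a cusp of $X_0(p^\beta nh)$ versus the width of the relevant cusp $S_\mu s$ as a cusp of $X_0(p^\alpha N)$: if $f$ has a pole of order $\le r$ at a cusp of width $w$ on $X_0(p^\alpha N)$, then in the local parameter $q_w = e^{2\pi i \tau/w}$ the principal part has degree $\le r$, and passing through $S_\mu$ rescales the local uniformizer, so the pole order of $f|_k S_\mu$ at $s$, measured in the local parameter of $X_0(p^\beta nh)$ at $s$, is at most $r$ times the ratio of widths.

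The key step is therefore the elementary width computation. The cusps relevant to the pole of $f|U_p$ are (up to $\G_0(p^\alpha N)$-equivalence) $\infty$ and, when present, cusps $s$ with $s \stackrel{p}{\sim} 0$; and $U_p$ changes the power of $p$ in the level from $\alpha$ to $\beta$. When $\alpha = 0$, the level goes from $N$ to $pN$ (so $\beta = 1$), and the cusp $0$ of $X_0(pN)$ lies above the cusp $0$ of $X_0(N)$ with ramification index dividing $p$, while also the matrices $S_\mu$ contribute a further factor of $p$ to the change of uniformizer at $\infty$ vs. $0$; combining these two factors of $p$ gives the bound $rp^2$. When $\alpha \ge 1$, the level changes from $p^\alpha N$ to $p^{\max\{\alpha-1,1\}}N$, the widths of the relevant cusps change by only a single factor of $p$, and one gets $rp$. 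I would organize this by the standard formula: the width of the cusp $a/c$ (with $\gcd(a,c)=1$) on $\G_0(M)$ is $M/(c^2, M) \cdot$ (a unit), so one just compares $(c^2, p^\alpha N)$ with $(c^2, p^\beta N)$ for $c \in \{1, \text{divisor giving } s\sim 0\}$, and separately accounts for the denominator scaling caused by $S_\mu$.

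The main obstacle — really the only subtlety — is bookkeeping the interaction between the $S_\mu$-scaling and the cusp-width change without double counting, especially in the $\alpha = 0$ case where both effects are present and conspire to give $p^2$ rather than $p$. I would handle this by working entirely with explicit scaling matrices $\sigma_s \in \SL_2(\Z)$ taking $\infty$ to each relevant cusp, writing $S_\mu \sigma_s = \sigma_{s'} \left(\begin{smallmatrix} a & b \\ 0 & d\end{smallmatrix}\right)$ in Hermite normal form with $ad = p$, and reading off that the pole order at $s'$ in the $X_0(p^\beta nh)$-uniformizer is at most $r \cdot (d/w_{s'}) \cdot w_{S_\mu s, \,\alpha}$, which is bounded by $rp^2$ when $\alpha=0$ and $rp$ otherwise after substituting the widths. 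Since the statement only asserts an upper bound, any overcounting is harmless, so I would not optimize the constant beyond what is claimed.
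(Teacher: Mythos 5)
Your plan is correct, and the bound does come out as claimed: writing $(f|U_p)|\sigma_s = \frac1p\sum_\mu f|S_\mu\sigma_s$, factoring $S_\mu\sigma_s$ into an element of $\SL_2(\Z)$ times an upper-triangular matrix of determinant $p$, and comparing the width of the relevant cusp of $X_0(p^\alpha N)$ with the width of $s$ on $X_0\big(p^\beta N\big)$ gives a scaling factor that is at most $p^2$ when $\alpha=0$ (the worst case being the single $\mu$ with $a=p$, $d=1$ over a cusp with $p\nmid c_s$, where the width also grows by $p$ in passing from level $N$ to level $pN$) and at most $p$ when $\alpha\ge 1$; since only an upper bound is asserted, the slack in the remaining cases is harmless, and your appeal to Lemma~\ref{lemma:hauptmodul_poles}(a) is not even needed, as the width bookkeeping bounds the pole order at every cusp. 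The paper argues differently in presentation, though the underlying mechanism is the same: it realizes $U_p$ as the correspondence given by $\big(\Gamma_0\big(p^{\beta}nh\big)\cap\gamma^{-1}\Gamma_0\big(p^{\beta}nh\big)\gamma\big)\lmod\H^*$ with its two degree-$p$ projections to $X_0\big(p^{\beta}nh\big)$, notes that pulling back along a degree-$p$ map multiplies pole orders by at most $p$ (ramification indices at cusps divide $p$, which also supplies the extra factor of $p$ when $\alpha=0$ from pulling $f$ back from level $N$ to level $pN$), and that the trace down the other projection does not increase the maximal pole order. Your version unpacks these geometric statements into explicit matrices and cusp widths: it is longer and requires careful case bookkeeping, but it is self-contained at the level of $q$-expansions and gives cusp-by-cusp information, whereas the paper's correspondence argument is shorter and makes the appearance of the degrees $p$ and $p^2$ conceptually transparent.
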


\begin{proof}The ramification index of each cusp of $X_0(pnh)$ over $X_0(nh)$ is a divisor of $p$. Thus, for the case $\alpha=0$, the maximum order of a pole of $f$ pulled back to $X_0(pnh)$ is $rp$. The $U_p$ operator may be defined via the correspondence
\begin{center}
 \begin{tikzcd}
 \left(\G_0(p^{\beta}nh)\cap\gamma^{-1}\G_0(p^{\beta}nh)\gamma\right)\lmod\H^*\ar{d}\ar{r}{\sim} & \left(\gamma\G_0(p^{\beta}nh)\gamma^{-1}\cap\G_0(p^{\beta}nh)\right)\lmod\H^*\ar{d}\\
 X_0(p^{\beta}nh) &X_0(p^{\beta}nh)
 \end{tikzcd}
\end{center}
where $\gamma = \left(\begin{smallmatrix}1 & 0 \\ 0 &p\end{smallmatrix}\right)$. The projections have degree $p$, so $\mt$ pulls back to a function on $\big(\gamma\G_0\big(p^{\beta}nh\big)\gamma^{-1}$ $\cap\G_0(p^{\beta}nh)\big)\lmod\H^*$ with poles at most degree $rp^2$ when $\alpha=0$ and $rp$ else. The other maps of the correspondence, which are pullback by the isomorphism and trace down to $X_0(p^{\beta}nh)$, do not increase the maximum pole order, so the claim follows.
\end{proof}

\subsection{Trace formulas}\label{ssec:T-traces}
 Following Serre's idea \cite{Serre72}, we will apply the trace to view classical modular forms as $p$-adic modular forms of lower level. In this section we discuss a few properties of trace maps.

Suppose $\G$ and $\G'$ are subgroups of $\GL_2^+(\R)$ with $\G$ a finite index subgroup of $\G'$. We define the \textit{trace} $\tr_{\G\lmod \G'}$ from $\G$ to $\G'$ to be the operation
\begin{gather}\label{eqn:trace_def}
 \tr_{\G\lmod \G'} f = \sum_{i=1}^m f |_k \g_i,
\end{gather}
where $\{\g_1, \dots, \g_m\}$ is a system of right coset representatives for $\G \lmod \G'$. If $f$ is modular on $\G$ then $\tr_{\G\lmod \G'}$ is modular on the larger group $\G'$. When $\G = \G_0(N)$ and $\G' = \G_0(N')$, we simply write $\tr_{N\lmod N'}$ for $\tr_{\G\lmod \G'}$.

First consider $\G = \G_0(pN)$ and $\G' = \G_0(N)$. The following generalizes \cite[Lemma~7]{Serre72}.
\begin{Lemma}\label{lemma:trace_basic}Let $p\nmid N$ be prime.
 \begin{enumerate}\itemsep=0pt
 \item[$(a)$] A set of representatives for $\G_0(pN)\lmod \G_0(N)$ is given by
 \begin{gather*}\left\{\Matrix{1 & 0 \\ 0 & 1}\right\}\cup\left\{\Matrix{1 & \lambda \\ N & N\lambda + 1} \colon 1\le \lambda\le p\right\}.\end{gather*}
 \item[$(b)$] If $f$ is a weight $k$ modular form on $\G_0(pN)$ then
 \begin{gather*}
 \tr_{pN\lmod N}f = f + p^{1 - k/2}(f|_kW_p)|U_p
 \end{gather*}
 where $W_p$ is the corresponding Atkin--Lehner involution on $\G_0(pN)$.
 \end{enumerate}
\end{Lemma}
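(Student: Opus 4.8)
The plan is to prove part (a) by a counting argument and part (b) by a direct computation with the slash operator, generalizing the $N=1$ case in \cite[Lemma~7]{Serre72}. For (a), first note that each of the listed matrices lies in $\G_0(N)$: the matrices $\left(\begin{smallmatrix}1&\lambda\\N&N\lambda+1\end{smallmatrix}\right)$ have determinant $1$ and lower-left entry divisible by $N$. Since $p\nmid N$, the index $[\G_0(N):\G_0(pN)]$ equals $p+1$, so it suffices to check that these $p+1$ matrices lie in distinct right cosets of $\G_0(pN)$. For distinct $\lambda,\mu\in\{1,\dots,p\}$, a short computation gives that the lower-left entry of $\left(\begin{smallmatrix}1&\lambda\\N&N\lambda+1\end{smallmatrix}\right)\left(\begin{smallmatrix}1&\mu\\N&N\mu+1\end{smallmatrix}\right)^{-1}$ is $N^2(\mu-\lambda)$, which is not divisible by $pN$ as $p\nmid N$ and $0<|\mu-\lambda|<p$; and $\left(\begin{smallmatrix}1&\lambda\\N&N\lambda+1\end{smallmatrix}\right)$ is not in $\G_0(pN)$ (so its coset differs from that of the identity) since $pN\nmid N$. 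Hence all $p+1$ cosets are distinct, proving (a).

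For (b), combining (a) with the definition (\ref{eqn:trace_def}) of the trace gives $\tr_{pN\lmod N}f = f + \sum_{\lambda=1}^{p}f|_k\left(\begin{smallmatrix}1&\lambda\\N&N\lambda+1\end{smallmatrix}\right)$, so it remains to identify the sum on the right with $p^{1-k/2}(f|_kW_p)|U_p$. I would fix the representative $W_p=\left(\begin{smallmatrix}p&b\\pN&dp\end{smallmatrix}\right)$ with $pd-Nb=1$, noting that $f|_kW_p$ is independent of this choice because $W_p$ normalizes $\G_0(pN)$ and $f$ is $\G_0(pN)$-invariant. With $S_\mu=\left(\begin{smallmatrix}1&\mu\\0&p\end{smallmatrix}\right)$ as in (\ref{eqn:up_defn}), one computes $W_pS_\mu = p\left(\begin{smallmatrix}1&\mu+b\\N&N\mu+dp\end{smallmatrix}\right)$, and since a central scalar acts trivially under $|_k$ this gives $f|_k(W_pS_\mu)=f|_k\left(\begin{smallmatrix}1&\mu+b\\N&N\mu+dp\end{smallmatrix}\right)$. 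The key step is to check that for each $\mu\in\{0,\dots,p-1\}$ and the unique $\lambda\in\{1,\dots,p\}$ with $\lambda\equiv\mu+b\Mod{p}$, one has $\left(\begin{smallmatrix}1&\mu+b\\N&N\mu+dp\end{smallmatrix}\right)=\gamma\left(\begin{smallmatrix}1&\lambda\\N&N\lambda+1\end{smallmatrix}\right)$ for some $\gamma\in\G_0(pN)$; indeed, the lower-left entry of the relevant product equals $N^2(\lambda-\mu-b)$ (using $1-pd=-Nb$), divisible by $pN$ exactly because of the congruence and $p\nmid N$. As $\mu\mapsto\lambda$ is then a bijection $\{0,\dots,p-1\}\to\{1,\dots,p\}$ and $f|_k\gamma=f$, we get $f|_k(W_pS_\mu)=f|_k\left(\begin{smallmatrix}1&\lambda\\N&N\lambda+1\end{smallmatrix}\right)$, so by (\ref{eqn:up_defn})
\[
(f|_kW_p)|U_p = p^{k/2-1}\sum_{\mu=0}^{p-1}f|_k(W_pS_\mu)=p^{k/2-1}\sum_{\lambda=1}^{p}f|_k\left(\begin{smallmatrix}1&\lambda\\N&N\lambda+1\end{smallmatrix}\right).
\]
Rearranging and substituting into the trace formula above gives (b).

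The only step that is not purely formal is the coset identification in (b) --- verifying that $W_pS_\mu$ agrees, modulo left multiplication by $\G_0(pN)$ and a central scalar, with the representative $\left(\begin{smallmatrix}1&\lambda\\N&N\lambda+1\end{smallmatrix}\right)$ for $\lambda\equiv\mu+b\Mod{p}$. This is a short determinant-and-divisibility check resting on $pd-Nb=1$ and $p\nmid N$; everything else is bookkeeping with the slash operator, the definition of $U_p$, and the index count.
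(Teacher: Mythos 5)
Your proof is correct and follows essentially the same route as the paper: part (a) by the index count $[\G_0(N):\G_0(pN)]=p+1$ together with a pairwise inequivalence check, and part (b) by identifying each nontrivial coset representative $\left(\begin{smallmatrix}1&\lambda\\N&N\lambda+1\end{smallmatrix}\right)$ with $W_pS_\mu$ up to left multiplication by $\G_0(pN)$ and a scalar, which is exactly the paper's identity $\left(\begin{smallmatrix}1&\lambda\\N&N\lambda+1\end{smallmatrix}\right)=VW_p\left(\begin{smallmatrix}1/p&(\lambda+\mu)/p\\0&1\end{smallmatrix}\right)$ read in the other direction. You simply supply the determinant-and-divisibility details that the paper leaves as ``one can check.''
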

\begin{proof} Since $[\G_0(N) : \G_0(pN)] = p+1$, part (a) follows upon checking that the representatives are inequivalent modulo $\G_0(pN)$. One can also check that for any $1\le \lambda\le p$, if $\mu \equiv N^{-1}\Mod{p}$, then
 \begin{gather*}\Matrix{1 & \lambda \\ N & N\lambda + 1} = VW_p\Matrix{1/p & (\lambda + \mu)/p \\ 0 & 1}\end{gather*}
 for some $V\in \G_0(pN)$. Part (b) follows from this.
\end{proof}

The remainder of this section will extend Lemma~\ref{lemma:trace_basic} to the more general context we need, for example tracing from $\G_0(pN){+}e,\dots$ to $\G_0(N){+}e,\dots$ for $p\nmid N$. More precisely, for a prime $p\nmid nh$, suppose that $\G$ is a $pn|h$-type group with eigenvalue map $\eta$ with $p\nmid e$ for all $\al{e}{\G}$. Let $\G'$ be the $n|h$-type group such that $\AL(\G') = \AL(\G)$. We have $\G\subset\G'$, and can take $\eta'$ to be the eigenvalue map on $\G'$ with $\eta'|\G = \eta$. Since $\G'$ is generated by
\begin{gather*}x = \Matrix{1 & 1/h \\ 0 & 1},\qquad y = \Matrix{1 & 0 \\ n & 1},\qquad W_e \ \text{such that} \quad e\in \AL(\G') = \AL(\G)\end{gather*}
as in Section~\ref{ssec:T-nhtype}, this uniquely determines $\eta'$. Then \cite[Lemma 2.3]{Ferenbaugh93} and \cite[Theorem 2.7]{Ferenbaugh93} together imply that the inclusion of representatives
\begin{gather*}\iota\colon \ \G/\G_0(pnh)\hookrightarrow \G'/\G_0(nh)\end{gather*}
is an isomorphism. We set $H = \im(\iota|\G_\eta)\le \G'$, and consider the restricted isomorphism
\begin{gather}\label{eqn:restricted_iso}
\iota|\G_\eta\colon \ \G_\eta/\G_0(pnh)\stackrel{\sim}{\hookrightarrow} H/\G_0(nh).
\end{gather}
We have that $H\le \G'_{\eta'}$. Moreover, $\im\eta' = \im\eta$, so $[\G':\G'_{\eta'}] = [\G:\G_\eta] = [\G':H]$ and thus $\G'_{\eta'} = H$. We have nearly proved the following lemma.
\begin{Lemma}\label{lemma:trace_facts} Let $\G$ be a $pn|h$-type group with eigenvalue map $\eta$ such that $p\nmid e$ for all $\al{e}{\G}$. Let $\G'$ be an $n|h$-type group with $\AL(\G') = \AL(\G)$ with eigenvalue map $\eta'$ such that $\eta'|\G = \eta$. Then for any weight $k$ modular form on $\G_\eta$ we have
 \begin{gather*}\tr_{\G_\eta\lmod \G'_{\eta'}}f = \tr_{pnh\lmod nh}f = f + p^{1 - k/2}(f|_kW_p)|U_p.\end{gather*}
\end{Lemma}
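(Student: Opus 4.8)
The plan is to reduce the statement to Lemma~\ref{lemma:trace_basic}(b) via the identification of coset spaces already set up in the paragraph preceding the lemma. First I would observe that there are two equalities to prove: $\tr_{\G_\eta\lmod\G'_{\eta'}}f=\tr_{pnh\lmod nh}f$ and $\tr_{pnh\lmod nh}f=f+p^{1-k/2}(f|_kW_p)|U_p$. The second is exactly Lemma~\ref{lemma:trace_basic}(b) applied with $N=nh$ (using $p\nmid nh$), so it requires no new work; the burden is the first equality.

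For the first equality, the key point is that tracing depends only on the set of right cosets, and the restricted inclusion of representatives $\iota|\G_\eta\colon\G_\eta/\G_0(pnh)\stackrel{\sim}{\hookrightarrow}H/\G_0(nh)$ from~(\ref{eqn:restricted_iso}), together with the identification $\G'_{\eta'}=H$ established just above, shows that a set of right coset representatives for $\G_0(pnh)\lmod\G_0(nh)$ simultaneously serves as a set of right coset representatives for $\G_\eta\lmod\G'_{\eta'}$. Concretely, I would take the representatives $\g_1,\dots,\g_m$ from Lemma~\ref{lemma:trace_basic}(a) for $\G_0(pnh)\lmod\G_0(nh)$; since $\G_0(pnh)=\G_0(pn|1)\cap\G_0(pnh)$-type considerations give $\G_0(pnh)\subseteq\G_\eta$ and $\G_0(nh)\subseteq\G'_{\eta'}$, and since $[\G_\eta:\G_0(pnh)]=[\G'_{\eta'}:\G_0(nh)]$ by the isomorphism~(\ref{eqn:restricted_iso}), a dimension/index count forces $\G_\eta\cdot\{\g_i\}$ to exhaust the right cosets of $\G_\eta$ in $\G'_{\eta'}$ without repetition. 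Here one must check that each $\g_i$ actually lies in $\G'_{\eta'}$: the $\g_i$ are elements of $\G_0(nh)\subseteq\ker\eta'$ for $i\geq 1$ in the form given, except this needs the matrices $\left(\begin{smallmatrix}1&\lambda\\N&N\lambda+1\end{smallmatrix}\right)$ with $N=nh$ to be recognized as elements of $\G_0(nh)$, which they visibly are. Then $\tr_{\G_\eta\lmod\G'_{\eta'}}f=\sum_i f|_k\g_i=\tr_{pnh\lmod nh}f$ by~(\ref{eqn:trace_def}), since $f$ is a modular form on $\G_\eta\supseteq\G_0(pnh)$ so the same slash-sum computes both traces.

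The one subtlety I would be careful about is that $\tr_{\G_\eta\lmod\G'_{\eta'}}f$ as defined in~(\ref{eqn:trace_def}) uses \emph{right} coset representatives, and I need the $\g_i$ of Lemma~\ref{lemma:trace_basic}(a) to be right coset representatives for $\G_\eta\lmod\G'_{\eta'}$, not merely for $\G_0(pnh)\lmod\G_0(nh)$; this is where the isomorphism~(\ref{eqn:restricted_iso}) does the real work, because it says the inclusion $\G_0(nh)\hookrightarrow\G'_{\eta'}$ induces a bijection on the relevant coset spaces, so a transversal downstairs is a transversal upstairs. I expect this bookkeeping — verifying that the index equality plus containment $\G_0(pnh)\subseteq\G_\eta$, $\G_0(nh)\subseteq\G'_{\eta'}$ upgrades to ``same transversal'' — to be the main (though modest) obstacle; everything after that is an appeal to Lemma~\ref{lemma:trace_basic}(b). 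I would close by noting that the hypothesis $p\nmid e$ for all $\al{e}{\G}$ is what guarantees $\AL(\G')=\AL(\G)$ makes sense with $\G'$ an honest $n|h$-type group and that $W_p$ below is the Atkin--Lehner involution on $\G_0(pnh)$, consistent with the statement of Lemma~\ref{lemma:trace_basic}(b).
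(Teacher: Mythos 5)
Your route is the paper's: take the representatives $\{\g_i\}$ for $\G_0(pnh)\lmod\G_0(nh)$, argue that they also represent $\G_\eta\lmod\G'_{\eta'}$, and then invoke Lemma~\ref{lemma:trace_basic}(b) with $N=nh$. The gap sits exactly at the step you flag as ``bookkeeping'' and then do not carry out. The index equality $[\G'_{\eta'}:\G_\eta]=[\G_0(nh):\G_0(pnh)]$ extracted from~(\ref{eqn:restricted_iso}) does \emph{not} by itself ``force'' the cosets $\G_\eta\g_i$ to be distinct and exhaustive; it only shows that these two properties are equivalent to each other, so one of them must still be verified. Your fallback, that~(\ref{eqn:restricted_iso}) ``says the inclusion $\G_0(nh)\hookrightarrow\G'_{\eta'}$ induces a bijection on the relevant coset spaces, so a transversal downstairs is a transversal upstairs,'' misquotes that display: it is an isomorphism $\G_\eta/\G_0(pnh)\stackrel{\sim}{\rightarrow}\G'_{\eta'}/\G_0(nh)$ induced by the inclusion $\G_\eta\hookrightarrow\G'_{\eta'}$, not a bijection $\G_0(pnh)\lmod\G_0(nh)\rightarrow\G_\eta\lmod\G'_{\eta'}$; reading it as the latter is assuming the very transversal statement you need.

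The missing verification is one line, and it is the actual content of the paper's proof: if $\g_i\g_j^{-1}\in\G_\eta$, then since $\g_i,\g_j\in\G_0(nh)$ we get $\g_i\g_j^{-1}\in\G\cap\G_0(nh)=\G_0(pnh)$ (using the shape of elements of the $pn|h$-type group $\G$ together with $p\nmid h$; equivalently, the injectivity half of~(\ref{eqn:restricted_iso}) already gives $\G_\eta\cap\G_0(nh)=\G_0(pnh)$), hence $\g_i=\g_j$. With pairwise inequivalence in hand, the index count upgrades $\{\g_i\}$ to a full set of right coset representatives for $\G_\eta\lmod\G'_{\eta'}$, and the rest of your argument — both traces are computed by the same slash-sum, and Lemma~\ref{lemma:trace_basic}(b) supplies the displayed formula — goes through exactly as in the paper.
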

\begin{proof} We need to show $\tr_{\G_\eta\lmod \G_{\eta'}}f = \tr_{pnh\lmod nh}f$. Let $\{\gamma_i\}$ be any set of representatives for $\G_0(pnh)\lmod \G_0(nh)$. Then $\{\g_i\}$ is also a set of representatives for $\G_\eta\lmod \G'_{\eta'}$. Indeed, by the isomorphism~(\ref{eqn:restricted_iso}) we have $[\G'_{\eta'}:\G_\eta] = [\G_0(nh):\G_0(pnh)]$ so it suffices to check that no two $\g_i$ are equivalent. Suppose $\g_i\g_j^{-1}\in \G_\eta$. Then $\g_i\g_j^{-1}\in \G\cap \G_0(nh) = \G_0(pnh)$ so that $\g_i = \g_j$ as desired. The formula then follows from Lemma~\ref{lemma:trace_basic}.
\end{proof}

\begin{Remark}Lemma~\ref{lemma:trace_facts} only assumes that $f$ is on $\G_\eta$. Under the stronger assumption that~$f$ is on $(\G,\eta)$, we obtain the finer result that $\tr_{pnh\lmod nh}f$ is on $(\G',\eta')$. To see this, let $x$ and $Y = y^p$ be the generators of $\G_0(pn|h)/ \G_0(pnh)$, and choose appropriate representatives of~$W_e$ which normalize $\G_0(nh)$. Then, apply $x$, $Y$, and $W_e$ to both sides of~\eqref{eqn:trace_def}.
\end{Remark}

\section[$p$-adic modular forms]{$\boldsymbol{p}$-adic modular forms}\label{sec:serre}
In this section, we extend of Serre's theory of $p$-adic modular forms from \cite{Serre72} to Hauptmoduln and $n|h$-type groups. In particular, we study the interaction between eigenvalue maps and the mod $p$ weight filtration. These $p$-adic properties could be studied by extending the theory of Katz and others, but we choose to generalize Serre's original treatment in order to perform explicit calculations for our applications. Take $p\geq 5$, and let $\G$ be an $n|h$-type group with eigenvalue map $\eta$. We first study $M_k(\G,\eta)$ and its $p$-adic completion. In Section~\ref{ssec:S-hauptmoduln} we prove that Hauptmoduln $\mt$ become $p$-adic modular forms on some $(\G',\eta')$ under applications of $U_p$. In Section~\ref{ssec:S-hecke} we extract structural results concerning ordinary spaces and the action of $U_p$ on these $p$-adic modular forms. Again, readers interested only in modular groups of the form $\G_0(N){+}e,\dots$ can take $h = 1$ and let eigenvalue maps be identically $1$.

For an $n|h$-type group $\G$ with eigenvalue map $\eta\colon \G\rightarrow\bm\mu_{2h}$, we first define the spaces:
\begin{enumerate}\itemsep=0pt
 \item[(1)] $M_k^\Q(\G,\eta) = M_k(\G,\eta)\cap \psring{\Q}{q}$, the $\Q$-vector space of modular forms with rational $q$-expansion;
 \item[(2)] $M_k^{(p)}(\G,\eta) = M_k(\G,\eta)\cap\psring{\Z_{(p)}}{q}$, the $\Z_{(p)}$-module of modular forms with $p$-integral $q$-expansion; and
 \item[(3)] $\tilde M_k(\G,\eta) = M_k^{(p)}(\G,\eta) \otimes \F_p$, the $\F_p$-vector space obtained by reducing $M_k^{(p)}(\G,\eta)$ mod $p$.
\end{enumerate}
Similarly define $M_k^\Q(\G_\eta)$, $M_k^{(p)}(\G_\eta)$, and $\tilde M_k(\G_\eta)$. If $f$ reduces mod $p$ to a form in $\tilde{M}_k(\G,\eta)$, we abuse notation and write $f\in \tilde{M}_k(\G,\eta)$, and similarly for $\Gamma_\eta$. We focus on $M_k(\G,\eta)$, and the corresponding results for $M_k(\G_\eta)$ follow from Lemma~\ref{lemma:eigenmap_decomp}.

Following \cite{Serre72}, we define a \textit{$p$-adic modular form} on $(\G,\eta)$ to be a $q$-expansion $f\in\psring{\Q_p}{q}$ admitting a sequence $f_m\in M_{k_m}^\Q(\G,\eta)$ that converges $p$-adically with $v_p(f_m - f)\rightarrow\infty$, i.e.,
\begin{gather*}\lim_{m\rightarrow\infty} f_m = f.\end{gather*}
Similarly, a $p$-adic modular form on $\G$ is a $p$-adic modular form on $(\G,\bm{1})$.

For any $N$, if $f$, $f'$ are modular forms on $\G_0(N)$ of weight $k$, $k'$ and $f\equiv f'\Mod{p^n}$ then $k\equiv k'\Mod{p^{n-1}(p-1)}$ (see \cite[Corollary~4.4.2]{Katz73}). It follows that the weight of a $p$-adic modular form on $\G_0(N)$, defined as the limit of the $k_m$ in the space
\begin{gather*}\mf X = \varprojlim \Z/p^n\Z\times\Z/(p-1)\Z = \Z_p\times\Z/(p-1)\Z,\end{gather*}
exists and does not depend on the choice of sequence $(f_m)$. The same is true of $p$-adic modular forms on $(\G,\eta)$, since $M_k(\G,\eta)\subseteq M_k(\G_0(nh))$. In particular, forms in $M_k(\G,\eta)$ have trivial nebentypus, as required in~\cite{Katz73}.

The correct extension of the mod $p$ weight filtration to modular forms for $n|h$-type groups will feature a quadratic twist. To this end, we first twist our eigenvalue maps.

\begin{Definition}\label{def:twisted_eigmap}Let $\G$ be an $n|h$ type group and $p\nmid nh$ a prime. If $\eta$ is an eigenvalue map on~$\G$, then the \textit{twist} of $\eta$ is the map $\eta_t\colon \G\rightarrow \bm\mu_{2h}$ defined by
 \begin{gather*}
 \eta_t(\g) = \eta(\g)\qquad\text{for} \quad \g\in \G_0(n|h) \qquad\text{and}\qquad \eta_t(W_e) = \qr{e}{p}\eta(W_e)\qquad\text{for} \quad \al{e}{\G},
 \end{gather*}
 where $\qr{\cdot}{p}$ denotes the Legendre symbol.
\end{Definition}

\begin{Remark}If $\bm{1}$ denotes the trivial eigenvalue map $\bm{1}(\g) = 1$ for all $\g$, then $\eta_t = \eta\bm{1}_t$ for all $\eta$. Also, if $\qr{e}{p}=1$ for all $\al{e}{\G}$ then $\eta=\eta_t$.
\end{Remark}

Below, we collect some useful facts and begin to see the relationship between eigenvalue map twists and the weight mod $2(p-1)$. Let $E_k(\t)$ denote the weight $k$ \emph{Eisenstein series} with constant term $1$.

\begin{Proposition} \label{prop:twisted_inclusions}Let $\G$ be an $n|h$-type group with eigenvalue map $\eta$, and let $p\ge 5$ be prime with $p\nmid nh$.
 \begin{enumerate}\itemsep=0pt
 \item[$(a)$] Let $F(\tau) = E_{p-1}(h\tau)$. Then
 \begin{gather*}\hat F = \sum_{\al{e}{\G}} \qr{e}{p}F|_{p-1}W_e\in M_{p-1}(\G,\bm{1}_t)\end{gather*}
 is congruent to $\#\AL(\G)$ mod $p$.
 \item[$(b)$] For all $k$, we have the inclusions
 \begin{gather*}\tilde M_k(\G,\eta)\subseteq \tilde M_{k + p-1}(\G,\eta_t)\subseteq \tilde M_{k + 2(p-1)}(\G,\eta)\subseteq M_{k + 3(p-1)}(\G,\eta_t)\subseteq\cdots.\end{gather*}
 \item[$(c)$] Suppose $\eta\neq\eta_t$. Then for $f\in M_k^\Q(\G,\eta)$ and $f'\in M_{k'}^\Q(\G,\eta)$ we have that
 \begin{gather*}0\not\equiv f\equiv f'\pmod{p^n}\qquad\text{implies}\qquad k\equiv k'\pmod{2p^{n-1}(p-1)}.\end{gather*}
 \item[$(d)$] If $\eta\neq\eta_t$ then the weight of a $p$-adic modular form on $(\G,\eta)$ is well-defined as an element of $\hat{\mf X} = \Z_p\times\Z/(2p - 2)\Z$.
 \end{enumerate}
\end{Proposition}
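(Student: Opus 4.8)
The plan is as follows: in part~(a), build the twisted Eisenstein form $\hat F$; deduce part~(b) by repeated multiplication by $\hat F$; prove part~(c) by combining the classical weight congruence \cite[Corollary~4.4.2]{Katz73} with an analysis of how multiplication by powers of $\hat F$ twists the eigenvalue map; and deduce part~(d) formally from~(c). Part~(c) is the crux.

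\emph{Parts (a) and (b).} First one checks that $F(\tau)=E_{p-1}(h\tau)$ is a holomorphic modular form of weight $p-1$ on $\G_0(n|h)$: invariance under $x=\left(\begin{smallmatrix}1 & 1/h \\ 0 & 1\end{smallmatrix}\right)$ is immediate since $x$ acts by $\tau\mapsto\tau+1/h$, and invariance under the rest of $\G_0(n|h)$ follows by conjugating it into $\G_0(n/h)\subset\SL_2(\Z)$ via $\left(\begin{smallmatrix}h & 0 \\ 0 & 1\end{smallmatrix}\right)$ (using $h\mid n$), exactly as for $g_p$ in the proof of Lemma~\ref{lemma:delta_quotient}. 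By~\eqref{eqn:exact_divisor_action} each summand $F|_{p-1}W_e=\left(\frac{h*e}{h}\right)^{(p-1)/2}E_{p-1}\big((h*e)\tau\big)$ is again holomorphic on $\G_0(nh)$ (note $h*e\mid nh$) and $\G_0(n|h)$-invariant; since each $W_f$ with $\al{f}{\G}$ normalizes $\G_0(n|h)$ and $w_ew_f=w_{e*f}$, applying $|_{p-1}W_f$ to $\hat F$ permutes the summands by $e\mapsto e*f$ and multiplies the $e$-th summand's sign by $\qr{f}{p}$ (using $\qr{e*f}{p}=\qr{e}{p}\qr{f}{p}$), so $\hat F|_{p-1}W_f=\qr{f}{p}\hat F=\bm{1}_t(W_f)\hat F$ and hence, together with the $\G_0(n|h)$-invariance, $\hat F\in M_{p-1}(\G,\bm{1}_t)$. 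Reducing each summand modulo $p$ using $E_{p-1}\equiv1\Mod p$ and Euler's criterion $\left(\frac{h*e}{h}\right)^{(p-1)/2}\equiv\qr{h*e/h}{p}=\qr{e}{p}\Mod p$ (valid since $p\nmid nh$) gives $\hat F\equiv\sum_{\al{e}{\G}}\qr{e}{p}^2=\#\AL(\G)\Mod p$, proving~(a). For~(b): since $\#\AL(\G)$ is a power of $2$ and $p\ge5$, $\hat F$ reduces to a unit $c\in\F_p^\times$; so for a $p$-integral $f$ representing a class in $\tilde M_k(\G,\eta)$, the product $\hat Ff$ has $p$-integral $q$-expansion, lies in $M_{k+p-1}(\G,\bm{1}_t\eta)=M_{k+p-1}(\G,\eta_t)$, and reduces to $c\bar f$, so the class of $f$ lies in $\tilde M_{k+p-1}(\G,\eta_t)$. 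Iterating and using $(\eta_t)_t=\eta$ gives the whole chain.

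\emph{Part (c).} We may assume $f\not\equiv0\Mod p$ (hence also $f'\not\equiv0\Mod p$) and, without loss of generality, $k\le k'$. The classical congruence \cite[Corollary~4.4.2]{Katz73} gives $k'-k=p^{n-1}(p-1)m$ for some integer $m\ge0$, and it remains to show $m$ is even. The key device is the power $\hat F^{p^{n-1}}$: writing $\hat F=c+p\,\phi$ with $\phi$ $p$-integral, an induction via the binomial theorem gives $\hat F^{p^{n-1}}\equiv c^{p^{n-1}}\Mod{p^n}$, and since $p^{n-1}$ is odd and $\bm{1}_t^2=\bm{1}$ this power lies in $M_{p^{n-1}(p-1)}(\G,\bm{1}_t)$. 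Hence $g:=\hat F^{p^{n-1}m}f$ has weight $k'$, lies in $M_{k'}(\G,\bm{1}_t^{\,m}\eta)$, and satisfies $g\equiv c^{p^{n-1}m}f\equiv c^{p^{n-1}m}f'\Mod{p^n}$, so $\bar g$ is a nonzero scalar multiple of $\bar{f'}$. Thus $g$ and $f'$ are modular forms of weight $k'$ on $\G_0(nh)$ with $p$-integral $q$-expansions, whose reductions $\bar g$ and $\bar{f'}$ are nonzero and proportional, and which transform under $\G$ by $\bm{1}_t^{\,m}\eta$ and by $\eta$ respectively; reducing these transformation laws modulo $p$ and invoking injectivity of reduction mod $p$ on $\bm\mu_{2h}$ (valid since $p\nmid2h$) forces $\bm{1}_t^{\,m}\eta=\eta$, i.e.\ $\bm{1}_t^{\,m}=\bm{1}$. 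Since $\eta_t=\eta\bm{1}_t$, the hypothesis $\eta\ne\eta_t$ says precisely $\bm{1}_t\ne\bm{1}$, so $\bm{1}_t^{\,m}=\bm{1}$ forces $m$ even and thus $k\equiv k'\Mod{2p^{n-1}(p-1)}$. I expect this to be the main obstacle: it is the only place the quadratic twist genuinely enters, and it hinges on the parity identity $\bm{1}_t^{\,p^{n-1}m}=\bm{1}_t^{\,m}$ (from $p$ odd and $\bm{1}_t^2=\bm{1}$) together with injectivity of reduction mod $p$ on roots of unity of order prime to $p$.

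\emph{Part (d).} Since $\hat{\mf X}=\varprojlim\big(\Z/p^n\Z\times\Z/(2p-2)\Z\big)=\Z_p\times\Z/(2p-2)\Z$, both the existence and the well-definedness of the weight follow from~(c): given a nonzero $p$-adic modular form $f$ on $(\G,\eta)$ with approximating sequence $f_m\in M_{k_m}^\Q(\G,\eta)$, applying~(c) (after normalizing the $f_m$ to have $v_p=0$) to pairs $(f_m,f_{m'})$ with $m,m'$ large shows that $(k_m)$ is Cauchy in $\hat{\mf X}$, and applying~(c) across any two approximating sequences of $f$ shows that the limits coincide. As usual, the weight of the zero form is left undefined.
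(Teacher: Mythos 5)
Your proposal is correct and follows essentially the same route as the paper: the twisted Eisenstein symmetrization $\hat F$ with $E_{p-1}\equiv 1\Mod p$ for (a), multiplication by (powers of) $\hat F$ for (b), and for (c) moving $f$ to weight $k'$ by an odd/even power of $\hat F$ and comparing the $W_e$-transformation at a quadratic nonresidue $e$, with (d) as a formal consequence. The only differences are presentational: the paper needs just a mod-$p$ congruence $g\equiv f$ and concludes by the contradiction $f'\equiv -f'\Mod p$, whereas you track the congruence mod $p^n$ via $\hat F^{p^{n-1}m}$ and phrase the conclusion as $\bm{1}_t^{\,m}=\bm{1}$; both rest on the same (implicitly used) compatibility of slashing by $W_e$ with mod-$p$ reduction.
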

\begin{proof} Since $F$ is invariant under $\Gamma_0(n|h)$, the statement that $\hat F\in M_{p-1}(\G,\bm{1}_t)$ becomes
 \begin{gather*}\hat F|W_e = \qr{e}{p}F,\end{gather*}
 which follows from multiplicativity of the Legendre symbol. We also have
 \begin{gather*}F|_{p-1}W_e(\tau) = \left(\frac{h*eh^2}{h}\right)^{\frac{p-1}{2}}E_{p-1}\big(\big(h*eh^2\big)\tau\big)\equiv \qr{e}{p} \Mod{p}\end{gather*}
 since $E_{p-1}\equiv 1\Mod{p}$ (see \cite[Section 1]{Serre72}). Thus $\hat{F}\equiv\sum\limits_{e\in\AL(\G)} 1\Mod{p}$, giving part (a). Since $p \neq 2$, this implies $1\in \tilde M_{p-1}(\G,\bm{1}_t)$, giving part (b) since $\eta\bm1_t = \eta_t$ and
 \begin{gather*}M_k(\G,\eta)\cdot M_{k'}(\G,\eta') = M_{k + k'}(\G,\eta\eta').\end{gather*}

 For part (c), we already know $k\equiv k'\pmod{p^{n-1}(p-1)}$. Assume without loss of generality that $k\le k'$. If $k\not\equiv k'\Mod{2p - 2}$ then by part (b) there exists $g\in M_{k'}^{\Q}(\G,\eta_t)$ with $g\equiv f$ $\Mod{p}$. Since $\eta\neq\eta_t$, let $\al{e}{\G}$ be a quadratic nonresidue so that
 \begin{gather*}\eta(W_e)f' = f'|_{k'}W_e\equiv g|_{k'}W_e = -\eta(W_e)g\equiv -\eta(W_e)f'\Mod{p},\end{gather*}
 which implies $f'\equiv f\equiv 0\Mod{p}$. Part (d) then follows.
\end{proof}

This motivates the following $\F_p$-spaces, which incorporate these twisted inclusions. Set
\begin{gather*}
 \tilde M(\G,\eta)^\alpha = \bigcup_{k\equiv \alpha\Mod{2p - 2}} \tilde M_k(\G,\eta) \quad \cup\quad \bigcup_{k\equiv \alpha + p - 1\Mod{2p - 2}} \tilde M_k(\G,\eta_t)
\end{gather*}
for $\alpha\in\Z/(2p - 2)\Z$. If every $\al{e}{\G}$ is a quadratic residue mod $p$, we have $\eta=\eta_t$, and $\tilde M(\G,\eta)^\alpha$ only depends on $\alpha$ mod $p-1$.

\subsection[Producing $p$-adic modular forms from Hautpmoduln]{Producing $\boldsymbol{p}$-adic modular forms from Hautpmoduln}\label{ssec:S-hauptmoduln}
Let $\G$ be a $p^\alpha n|h$-type group with eigenvalue map $\eta$ where $p\nmid nh$ is prime. Suppose $\G_\eta$ is genus zero, and its Hautpmodul $\mt$ is on $\G$ with eigenvalue map $\eta$. We show that for some $\beta$, $\mt|U_p^\beta$ is a $p$-adic modular form on $(\G',\eta')$ for some specified $n|h$-type group $\G'$ and eigenvalue map $\eta'$. We can take $\beta = 1$ whenever $\alpha\le 3$.

\begin{Lemma}\label{lemma:hauptmoduln_are_padic_mfs}Let $\G$ be a $p^\alpha n|h$-type group for $p\nmid nh$ prime, and $\eta$ be an eigenvalue map on~$\G$. Let $f$ be a weight $0$ weakly holomorphic modular form on $(\G,\eta)$ that is holomorphic away from~$\infty$. Let~$\G'$ be the $n|h$-type group with $\al{e}{\G'}$ if and only if $\al{e}{\G}$ and $p\nmid e$. Let $\beta = \max\{1,\alpha - 1\}$, and let $\eta'$ be the eigenvalue map on $\G'$ such that
\begin{gather*}\eta'(x) = \eta\big(x^{p^\beta}\big),\qquad \eta'(y') = \eta\big(y^{p^{\alpha+\beta}}\big),\qquad \eta'(W_e) = \eta(W_e)\qquad \text{for} \quad \al{e}{\G'},\end{gather*}
where $x$, $y$, $y'$ are the generators of $\G_0(p^\alpha n|h)$ and $\G_0(n|h)$ given by
\begin{gather*}x = \Matrix{1 & h \\ 0 & 1},\qquad y = \Matrix{1 & 0 \\ p^\alpha n & 1},\qquad y' = \Matrix{1 & 0 \\ n & 1}.\end{gather*}
Then, $f|U_p^\beta$ is $p$-adic modular form of weight $0$ on $(\G',\eta')$.
\end{Lemma}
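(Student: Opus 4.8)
The plan is to combine the structural results of Section~\ref{ssec:T-operators} with a Serre-style approximation argument: first transport $f$ across the $\beta$ copies of $U_p$ onto a $pn|h$-type group, then multiply $f|U_p^\beta$ by a high power of the $\Delta$-quotient $g$ of Lemma~\ref{lemma:delta_quotient} to produce genuinely \emph{holomorphic} forms, and finally descend from that $pn|h$-type group to $\G'$ by way of the trace formula of Lemma~\ref{lemma:trace_facts}.

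First I would apply Lemma~\ref{lemma:up_nh_facts}(b) a total of $\beta$ times. Each application lowers the exponent of $p$ in the level by one until it reaches $1$, so $F:=f|U_p^\beta$ is a weight-$0$ weakly holomorphic modular form on $(\G'',\eta'')$, where $\G''$ is the $pn|h$-type group with $\al{e}{\G''}$ if and only if $\al{e}{\G}$ and $p\nmid e$, and $\eta''$ is the eigenvalue map obtained by iterating the relations $S_\mu x^p = xS_\mu$ and~(\ref{eqn:y_commute}) from the proof of that lemma. A direct computation (using also $p^2\equiv 1\Mod{2h}$, which holds since $2h\mid 48$) identifies $\eta''$ with the restriction to $\G''$ of the map $\eta'$ displayed in the statement, so in particular $\G''\le \G'$ and $\G''_{\eta''}\le \G'_{\eta'}$. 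Since $\AL(\G'')=\{e\in\AL(\G): p\nmid e\}$, every $\al{e}{\G''}$ is prime to $p$, which is the hypothesis needed to invoke Lemmas~\ref{lemma:hauptmodul_poles}(b),(c), \ref{lemma:delta_quotient}, and~\ref{lemma:trace_facts}. By Lemma~\ref{lemma:hauptmodul_poles}(a) at the first $U_p$ and Lemma~\ref{lemma:hauptmodul_poles}(b) at each subsequent one, the poles of $F$ on $\G''_{\eta''}$ are supported on $\{\infty\}\cup\{s: s\xsim{p}0\}$; and since $U_p$ acts on the expansion at $\infty$ by $\sum a(n)q^n\mapsto \sum a(pn)q^n$ while the pole of $f$ at $\infty$ has order less than $p^\beta$ (order $1$ when $f$ is a Hauptmodul), $F$ is in fact holomorphic at $\infty$. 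Hence the poles of $F$ lie only among the cusps $s\xsim{p}0$, with order bounded in terms of $f$ and $p$ by Lemma~\ref{lemma:pole_bound}; also $F$ has $p$-integral $q$-expansion, since $U_p$ preserves $p$-integrality.

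Next I would apply Lemma~\ref{lemma:delta_quotient} to $\G''$, producing a holomorphic modular form $g$ on $(\G'',\bm{1})$ of weight $a=12m(p-1)$, $m=\#\AL(\G'')$, with $g\equiv 1\Mod{p}$, $v_p(g|_aW_p)\ge 6m(p+1)$, and $g$ vanishing to order $\ge m(p^2-1)$ at every cusp $s\xsim{p}0$. For all large $n$ the form $\Phi_n := g^{p^n}F$ is holomorphic everywhere --- the zeros of $g^{p^n}$ of order $\ge p^n m(p^2-1)$ at the cusps $s\xsim{p}0$ absorb the bounded poles of $F$ there, while $g\equiv 1$ is a unit at $\infty$ --- so $\Phi_n\in M_{ap^n}^\Q(\G'',\eta'')$; and since $g\equiv 1\Mod{p}$ with $p$ odd forces $g^{p^n}\equiv 1\Mod{p^{n+1}}$, we get $v_p(\Phi_n - F)\ge n+1$. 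To descend to $\G'$, set $\Psi_n := \tr_{pnh\lmod nh}\Phi_n$, which by Lemma~\ref{lemma:trace_facts} (and the Remark following it, for the eigenvalue map) is a holomorphic modular form on $(\G',\eta')$ of weight $ap^n$ with $\Psi_n = \Phi_n + p^{\,1-ap^n/2}(\Phi_n|_{ap^n}W_p)|U_p$. Using $(g^{p^n}F)|_{ap^n}W_p = (g|_aW_p)^{p^n}(F|_0W_p)$, that $F|_0W_p$ is a fixed weakly holomorphic modular function whose $q$-expansion therefore has $p$-adic valuation bounded below, say by $-C$, and that $U_p$ does not lower valuations, one obtains
\[
v_p\big(\Psi_n - \Phi_n\big)\ \ge\ \big(1 - 6m(p-1)p^n\big) + 6m(p+1)p^n - C\ =\ 1 + 12mp^n - C,
\]
which tends to $\infty$. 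Combined with $v_p(\Phi_n - F)\ge n+1$, this gives $\Psi_n\to F$ $p$-adically; since each $\Psi_n$ is a holomorphic form in $M_{ap^n}^\Q(\G',\eta')$ and $ap^n = 12m(p-1)p^n\to 0$ in $\hat{\mf X} = \Z_p\times\Z/(2p-2)\Z$, we conclude that $F = f|U_p^\beta$ is a $p$-adic modular form of weight $0$ on $(\G',\eta')$.

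The step I expect to be the main obstacle is the final descent: it works only because the correction term $p^{1-ap^n/2}(\Phi_n|_{ap^n}W_p)|U_p$ has $p$-adic valuation tending to $\infty$ despite the large negative power of $p$ contributed by $p^{1-ap^n/2}$ as the weight $ap^n$ grows. This is precisely why the $\Delta$-quotient of Lemma~\ref{lemma:delta_quotient} is used rather than a symmetrization of Eisenstein series: property~(b) there, $v_p(g|_aW_p)\ge 6m(p+1)$, gives exactly the strict inequality $6m(p+1)>6m(p-1)$ that beats the weight, and property~(c) is what makes $\Phi_n$ holomorphic. Two subsidiary points requiring care are the eigenvalue-map bookkeeping in the first step --- the iterated substitutions $x\mapsto x^p$, $y\mapsto y^p$ accumulate to $x^{p^\beta}$, $y^{p^{\alpha+\beta}}$, which is what pins down the displayed formulas for $\eta'$ --- and checking that the forms constructed stay rational and $p$-integral so that membership in $M_k^\Q$ is legitimate; the latter holds because $g$ and $F$ are rational and $p$-integral and because the trace and the Atkin--Lehner operator $W_p$ (normalized so that $\det W_p = p$) preserve rationality of $q$-expansions.
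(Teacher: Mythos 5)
Your proposal is correct and follows essentially the same route as the paper's proof: iterate Lemma~\ref{lemma:up_nh_facts} to land on the $pn|h$-type group, multiply $f|U_p^\beta$ by $p^m$-th powers of the $\Delta$-quotient of Lemma~\ref{lemma:delta_quotient} to absorb the poles at the cusps $s\stackrel{p}{\sim}0$, trace down to $(\G',\eta')$ via Lemma~\ref{lemma:trace_facts}, and conclude by the same valuation estimate, which works precisely because $v_p(g|_aW_p)\ge 6m(p+1)>a/2$. The only slip is your parenthetical congruence $p^2\equiv 1\pmod{2h}$, which is false in general (e.g.\ $h=8$, $p=5$); the eigenvalue-map bookkeeping only needs the congruence modulo $h$, which does hold since $h\mid 24$ and $\eta(x),\eta(y)\in\bm\mu_h$ (because $x^h,y^h\in\G_0\big(p^\alpha nh\big)\subseteq\ker\eta$), exactly as in the proof of Lemma~\ref{lemma:up_nh_facts}.
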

\begin{proof} By Lemma~\ref{lemma:up_nh_facts}, $f|U_p^\beta$ is a weakly holomorphic modular form on $(\G,\nu)$ where $\G$ is the $pn|h$-type group $\G$ where $\al{e}{G}$ if and only if $\al{e}{\G}$ and $p\nmid e$, and $\nu$ satisfies
 \begin{gather*}\nu(x) = \eta\big(x^{p^\beta}\big),\qquad \nu(Y) = \eta\big(y^{p^{\alpha+\beta-1}}\big),\qquad \nu(W_e) = \eta(W_e),\qquad\text{where} \quad Y = \Matrix{1 & 0 \\ pn & 1}.\end{gather*}

 The remainder of the proof follows \cite[Theorem 10]{Serre72}. To show that $f|U_p^\beta$ is a $p$-adic modular form on $(\G',\eta')$, we set for $m\ge 0$
 \begin{gather*}f_m = \tr_{G_\nu\lmod \G'_{\eta'}}\big(f|U_p^\beta g^{p^m}\big) = \tr_{pnh\lmod nh}\big(f|U_p^\beta g^{p^m}\big),\end{gather*}
 where $g$ is the modular form on $\G$ given by Lemma~\ref{lemma:delta_quotient}. Since
 \begin{gather*}
 \eta'(x) = \nu(x) = \eta\big(x^{p^\beta}\big) \qquad\text{and}\qquad \eta'(y') = \nu(Y^{p}) = \eta\big(y^{p^{\alpha+\beta}}\big) \qquad \text{and} \\
 \eta'(W_e) = \nu(W_e) = \eta(W_e)\qquad \text{for} \quad \al{e}{\G'}
 \end{gather*}
 we know $f_m$ is a weakly holomorphic modular form on $(\G',\eta')$ by Lemma~\ref{lemma:trace_facts}. Lemma~\ref{lemma:hauptmodul_poles} shows that $f|U_p^\beta$ has poles only at the cusps equivalent to $0$ on $\G_0(p)$, and since $g$ has zeros at all such cusps, we know $f|U_p^\beta g^m$ is holomorphic for $m$ sufficiently large. If $a$ is the weight of $g$, the weight of $f_m$ is $ap^m$, which $p$-adically converges to $0$. Hence it suffices to show that $f_m\rightarrow f|U_p^\beta$ in the $p$-adic limit. We compute that
 \begin{gather*}
 f_m - f|U_p^\beta = \big(f_m - f|U_p^\beta g^{p^m}\big) + f|U_p^\beta\big(g^{p^m} - 1\big), \\
 v_p\big(f_m - f|U_p^\beta\big)\ge \min\big\{v_p\big(f_m - f|U_p^\beta g^{p^m}\big),v_p\big(f|U_p^\beta\big) + v_p\big(g^{p^m} - 1\big)\big\}.
 \end{gather*}
 Since $g^{p^m}\equiv 1\Mod{p^{m + 1}}$, we have $v_p\big(f|U_p^\beta\big) + v_p\big(g^{p^m} - 1\big)\ge m + 1$. Lemma~\ref{lemma:trace_facts} implies
 \begin{gather*}f_m - f|U_p^\beta g^{p^m} = p^{1 - ap^m/2}\big(f|U_p^\beta g^{p^m}|_{ap^m}W_p\big)|U_p,\end{gather*}
 and since applying $U_p$ does not decrease the power of $p$ dividing a $q$-expansion, we have
 \begin{align*}
 v_p\big(f_m - f|U_p^\beta g^{p^m}\big)&\ge 1 - \frac{ap^m}{2} + v_p\big(f|U_p^\beta|_0W_p\big) + p^mv_p(g|_aW_p)\\
 &\ge 1 + v_p\big(f|U_p^\beta|_0W_p\big) + p^m\left(v_p(g|_aW_p) - \frac{a}{2}\right).
 \end{align*}
 Lemma~\ref{lemma:delta_quotient} gives $v_p(g|_aW_p) > \frac{a}{2}$. Hence, $v_p\big(f_m - f|U_p^\beta\big)\rightarrow \infty$ as $m\rightarrow\infty$, as desired.
\end{proof}
\begin{Remark}\label{rmk:haupt_poly} If $\G$ is a $p^\alpha n|h$-type group with eigenvalue map $\eta$ such that $\G_\eta$ is genus zero, and the Hauptmodul $\mt$ on $\G_\eta$ is on $(\G,\eta)$, then Lemma~\ref{lemma:hauptmoduln_are_padic_mfs} applies. Moreover, since $\mt^r$ is on $(\G,\eta^{\sigma_r})$, the lemma also applies to powers of the Hauptmodul. In particular, polynomials in $\mt$ are $p$-adic modular forms on $\G'_{\eta'}$ after enough applications of $U_p$.
\end{Remark}

\subsection{Ordinary spaces}\label{ssec:S-hecke}
Having produced $p$-adic modular forms from Hauptmoduln on certain $n|h$-type groups, we now study the action of $U_p$. The key idea, developed by Serre on level $1$ in \cite{Serre72}, is that $U_p$ contracts mod $p$ modular forms onto a finite-dimensional space. These structural results will allow us to verify $p$-adic annihilation of certain Hauptmoduln in Section~\ref{ssec:A-ordinary}.
%this will allow us to study when these $p$-adic modular forms are annihilated under repeated application of $U_p$.

We will take $p\ge 5$ prime with $p\nmid nh$. For the Hecke operator $T_p$, we have
\begin{gather*}f|_kT_p = f|U_p + p^{k/2 - 1}f|_kA = f|U_p + p^{k-1}f(p\tau),\qquad\text{where} \quad A= \Matrix{p & 0 \\ 0 & 1},\end{gather*}
so $f|U_p\equiv f|_kT_p\Mod{p}$ for $k\ge 2$. Since $T_p$ acts on $M_k(\G_0(nh))$, we know $U_p$ acts on $\tilde M_k(\G_0(nh))$. Furthermore, let $\G$ be an $n|h$-type group and $\eta$ be an eigenvalue map. Since
\begin{gather*}A x = x^p A,\qquad A y^p = yA,\qquad A W_e = W_eA\qquad \text{for} \quad p\nmid e,\end{gather*}
Lemma~\ref{lemma:up_nh_facts} implies that $T_p\colon M_k(\G,\eta)\rightarrow M_k(\G,\eta^{\sigma_p})$. Hence $U_p\colon \tilde{M}_k(\G,\eta)\rightarrow \tilde{M}_k(\G,\eta^{\sigma_p})$, so we consider the space
\begin{gather*}\tilde M_k(\G,[\eta]_p) = \tilde M_k(\G,\eta) + \tilde M_k(\G,\eta^{\sigma_p}),\end{gather*}
which is stabilized by $U_p$. This sum is direct if and only if $\eta \neq \eta^{\sigma_p}$. We thus set
\begin{gather*}\tilde M(\G,[\eta]_p)^\alpha = \tilde M(\G,\eta)^\alpha + \tilde M(\G,\eta^{\sigma_p})^\alpha\end{gather*}
and remind the reader that $\tilde M(\G,\eta)^\alpha$ already encodes spaces with twisted eigenvalue map. We next show how $U_p$ contracts $\tilde M(\G,[\eta]_p)^\alpha$ onto a finite-dimensional space called the \textit{ordinary space}. Ordinary spaces of $p$-adic modular forms were extensively studied by Hida \cite{Hida93}. %, who proved many beautiful results describing their structure.
We describe our ordinary spaces in the language of Serre's $p$-adic modular forms.

\begin{Proposition}[ordinary decomposition]\label{prop:ordinary_decomp} Let $\G$ be an $n|h$-type group with eigenvalue map~$\eta$.
 \begin{enumerate}\itemsep=0pt
 \item[$(a)$] We can write
 \begin{gather*}\tilde M(\Gamma,[\eta]_p)^\alpha = \mf S(\G,[\eta]_p)^\alpha\oplus \mf N(\G,[\eta]_p)^\alpha\end{gather*}
 so that $U_p$ is bijective on the \textit{ordinary space} $\mf S(\G,[\eta]_p)^\alpha$ and locally nilpotent on $\mf N(\G,[\eta]_p)^\alpha$; that is, for any $\mf N(\G,[\eta]_p)^\alpha$, we have $f|U_p^n = 0$ for $n$ sufficiently large.
 \item[$(b)$] Let $4\le k\le p + 1$ be such that $k\equiv \alpha\Mod{p-1}$. If $k\equiv \alpha\Mod{2p - 2}$ then
 \begin{gather*}\mf S(\G,[\eta]_p)^\alpha\subseteq \tilde M_{k}(\G,[\eta]_p).\end{gather*}
 Otherwise,
 \begin{gather*}\mf S(\G,[\eta]_p)^\alpha\subseteq \tilde M_k(\G,[\eta_t]_p).\end{gather*}
 \end{enumerate}
\end{Proposition}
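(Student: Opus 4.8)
The plan is to mimic Serre's argument from \cite[Section~3]{Serre72}, replacing level-$1$ modular forms with the spaces $\tilde M(\G,[\eta]_p)^\alpha$ and keeping careful track of the twisting by $\bm1_t$ forced by Proposition~\ref{prop:twisted_inclusions}. First I would fix $\alpha \in \Z/(2p-2)\Z$ and observe that $\tilde M(\G,[\eta]_p)^\alpha$ is the increasing union of the finite-dimensional spaces $\tilde M_k(\G,[\eta]_p)$ (for $k \equiv \alpha \pmod{2p-2}$) and $\tilde M_k(\G,[\eta_t]_p)$ (for $k \equiv \alpha + p - 1 \pmod{2p-2}$); by part (b) of Proposition~\ref{prop:twisted_inclusions}, multiplication by $\hat F \equiv \#\AL(\G) \bmod p$ realizes these as a genuine nested chain, and since $p \ge 5$ does not divide $\#\AL(\G)$ (as $\#\AL(\G)$ is a power of $2$), multiplication by $\hat F$ is injective, so the chain strictly increases only finitely often in each dimension. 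The operator $U_p$ stabilizes this whole union, as noted just before the statement. So we are in the situation of a locally finite operator on an $\F_p$-vector space.

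Next I would prove the key \emph{boundedness} claim: there is a uniform bound, depending only on $p$ and $\G$, on the weight filtration of $U_p f$. Concretely, if $f \in \tilde M_k(\G,[\eta]_p)$ then $U_p f$ lies in $\tilde M_{k'}(\G,[\eta']_p)$ with $k' \le p+1$ and $k' \equiv \alpha$ (possibly after a $\bm1_t$-twist, i.e.\ landing in $[\eta_t]_p$ when the congruence class mod $2p-2$ demands it). This is exactly the level-$1$ phenomenon that $U_p$ lowers the weight filtration: write $U_p f \equiv f|_k T_p \pmod p$ for $k \ge 2$, use that $T_p$ preserves weight, and then apply the theta-operator / Ramanujan congruence bookkeeping of \cite[Section~3, Théorème~4 and its proof]{Serre72} to descend the filtration of $T_p f$ below $p+1$ in the same congruence class mod $p-1$; the refinement to mod $2p-2$ comes from Proposition~\ref{prop:twisted_inclusions}(c)--(d), which says the filtration is genuinely well-defined mod $2(p-1)$ when $\eta \ne \eta_t$, and tracking whether the twist $\bm1_t$ is picked up. Once this is established, $\bigcup_n \im(U_p^n) = \im(U_p^N)$ for $N$ large, and this stable image is contained in the finite-dimensional space $\tilde M_k(\G,[\eta]_p)$ (resp.\ $\tilde M_k(\G,[\eta_t]_p)$) with $4 \le k \le p+1$, $k \equiv \alpha \pmod{p-1}$; this gives part (b).

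For part (a), set $\mf S(\G,[\eta]_p)^\alpha = \bigcap_n \im(U_p^n) = \im(U_p^N)$ and $\mf N(\G,[\eta]_p)^\alpha = \bigcup_n \ker(U_p^n)$. Since $U_p$ is a locally finite operator on an $\F_p$-vector space that stabilizes after finitely many steps on the image side (by the boundedness above, the images stabilize at the finite-dimensional ordinary space), the standard Fitting-decomposition argument applies: $U_p$ is bijective on $\mf S$ by construction (it is surjective since $U_p(\im U_p^N) = \im U_p^{N+1} = \im U_p^N$, and then injective on this finite-dimensional space by rank-nullity once surjectivity holds), and locally nilpotent on $\mf N$ by definition. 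The direct-sum decomposition $\tilde M(\G,[\eta]_p)^\alpha = \mf S \oplus \mf N$ follows because for any $f$, applying $U_p^N$ lands in $\mf S$, and one writes $f = (\text{the unique } s \in \mf S \text{ with } U_p^N s = U_p^N f) + (f - s)$ with $f - s \in \ker U_p^N \subseteq \mf N$; uniqueness of $s$ gives $\mf S \cap \mf N = 0$.

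The main obstacle I anticipate is the boundedness step, specifically getting the \emph{twist} right: one must verify that $U_p$ really does send $\tilde M_k(\G,\eta)$ into a space whose weight filtration, when reduced below $p+1$, lands in $[\eta]_p$ versus $[\eta_t]_p$ according to the parity of the number of $(p-1)$-steps taken, using Proposition~\ref{prop:twisted_inclusions}(b) to interpret each filtration drop. On level $1$ this twist is invisible; here it is the whole content of the $\bm1_t$ versus $\bm1$ distinction, and the argument has to be threaded through the identification $U_p f \equiv f|_k T_p \pmod p$ together with the action of $A = \matslash{p}{0}{0}{1}$ on eigenvalue maps ($A x = x^p A$, $A W_e = W_e A$ for $p \nmid e$) recorded just above the statement, which shows $T_p \colon M_k(\G,\eta) \to M_k(\G,\eta^{\sigma_p})$ and hence that the relevant eigenvalue-map class is $[\eta]_p$ throughout, with the $\bm1_t$-twist entering only through the weight-lowering.
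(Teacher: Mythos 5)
Your overall architecture (a Fitting-type decomposition driven by a filtration-lowering bound for $U_p$) is the same as the paper's, but the decisive ingredient is missing rather than proved. The paper's proof of this proposition consists of defining $\mf S$ and $\mf N$ and invoking Lemma~\ref{lemma:filtration_facts}, the $(\G,\eta)$-analogue of Jochnowitz's bound $w(f|U_p)\le p+\frac{w(f)-1}{p}$ (Lemma~\ref{lemma:joch_filtration_facts}); that lemma is developed precisely for this proposition, so its content is what a proof here must supply. Your boundedness step reduces to Serre's level-one weight lowering plus ``tracking the twist,'' and you explicitly flag the twist as the main obstacle without resolving it. The tools you propose for it --- $f|U_p\equiv f|_kT_p\pmod{p}$ together with $Ax=x^pA$ and $AW_e=W_eA$ --- only show that $U_p$ preserves the class $[\eta]_p$ at a \emph{fixed} weight $k$; they do not show that the lower-weight form congruent to $f|U_p$, whose existence Serre/Jochnowitz guarantee only on $\G_0(nh)$, can be chosen to be modular on $\G$ with eigenvalue map in $[\eta]_p$ or $[\eta_t]_p$. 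That is exactly what it means for $w_{\G,\eta}$ to drop, and it is where the paper does the real work: one induces on $[\G:\G_0(nh)]$ (using $p\ge 5$, so $p$ does not divide the orders of the coset generators), projects the low-weight $\G_0(nh)$-form onto the eigenspaces of the coset generators and of the Atkin--Lehner involutions singled out by $[\eta]_p$, and compares with the original form after multiplying by powers of $\hat F$ from Proposition~\ref{prop:twisted_inclusions}(a); the fact that $\hat F$ lies in the $\qr{e}{p}$-eigenspace of $W_e$ is precisely what produces the $\bm{1}_t$-twist in the alternating inclusions. Without this projection argument (or an equivalent), your key inequality $w_{\G,\eta}(f|U_p)\le p+\frac{w_{\G,\eta}(f)-1}{p}$ is unestablished, and with it both parts (a) and (b).

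A secondary, repairable flaw: the claim that the images stabilize, $\bigcap_n\im U_p^n=\im U_p^N$ for some finite $N$ with this image contained in the finite-dimensional $\tilde M_k$, does not follow on the infinite-dimensional space $\tilde M(\G,[\eta]_p)^\alpha$, because the filtration bound lowers weight only by roughly a factor of $p$ per application, so the number of iterations needed to reach filtration $\le p+1$ depends on the input; for no fixed $N$ is $\im U_p^N$ seen to lie in $\tilde M_k$. The paper sidesteps this by defining $\mf S(\G,[\eta]_p)^\alpha$ as the stable image of $U_p$ restricted to the fixed finite-dimensional space $\tilde M_k(\G,[\eta]_p)$ (or $\tilde M_k(\G,[\eta_t]_p)$) with $4\le k\le p+1$, and your Fitting argument then goes through elementwise, choosing $N$ depending on $f$. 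The genuine gap is the unproved filtration lemma with its twist bookkeeping, not this bookkeeping of images.
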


\looseness=-1 Proposition~\ref{prop:ordinary_decomp} can be interpreted to mean that repeated application of $U_p$ reduces the weight of a modular form mod $p$ to either $0$ or $p-1$. To accomplish this, we need to incorporate the twisted eigenvalue maps. More precisely, the \textit{filtration} of $f \in \tilde M(\G,[\eta]_p)^\a$ with respect to $(\G,\eta)$ is
\begin{gather*}w_{\G,\eta}(f) = \min\big\{k \colon f\in \tilde M_k(\G,[\eta]_p)\text{ or }\tilde M_k(\G,[\eta_t]_p)\big\}.\end{gather*}
When $(\G,\eta)$ is clear from context, we will simply write $w$ for $w_{\G,\eta}$. Similarly, the filtration $w_\G$ of a modular form mod $p$ with respect to $\G$ is the filtration with respect to $(\G,\bm{1})$.

To prove Proposition~\ref{prop:ordinary_decomp} we generalize the following fact from~\cite{Jochnowitz82}.
\begin{Lemma}\label{lemma:joch_filtration_facts}
 Suppose $\Gamma = \Gamma_0(N)$. Then for modular forms $f$ mod $p$ on $\G$ we have
 \begin{gather*}w_\G(f|U_p)\le p + \frac{w_\G(f) - 1}{p}.\end{gather*}
 In particular, $w_\G(f|U_p) < w_\G(f)$ if $w_\G(f) > p + 1$.
\end{Lemma}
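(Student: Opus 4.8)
The plan is to transplant Jochnowitz's level-one argument to $\G_0(N)$, the two essential ingredients being the theta operator and the behaviour of the filtration under $p$-th powers. Write $f=\sum a_n q^n$ and recall the operator $\theta=q\frac{d}{dq}$, which acts on $q$-expansions by $\sum a_n q^n\mapsto\sum n a_n q^n$ and carries $\tilde M_k(\G_0(N))$ into $\tilde M_{k+p+1}(\G_0(N))$ with $w_\G(\theta h)\le w_\G(h)+p+1$ for every mod $p$ modular form $h$ on $\G_0(N)$; this is standard for $p\nmid N$, the level-one case being the engine of \cite{Jochnowitz82} (see also \cite{Serre72}). Iterating $\theta$ exactly $p-1$ times, and using that $n^{p-1}\equiv 1\Mod{p}$ when $p\nmid n$ while $n^{p-1}\equiv 0\Mod{p}$ when $p\mid n$, we see that
\begin{gather*}
\theta^{p-1}f=\sum_{p\nmid n} a_n q^n
\end{gather*}
is exactly the ``prime-to-$p$ part'' of $f$, a mod $p$ modular form on $\G_0(N)$ with $w_\G(\theta^{p-1}f)\le w_\G(f)+(p-1)(p+1)=w_\G(f)+p^2-1$.

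The complementary piece is
\begin{gather*}
f-\theta^{p-1}f=\sum_{p\mid n} a_n q^n=\left(\sum_m a_{pm}q^m\right)\big|\,V_p=(f|U_p)\,|\,V_p,
\end{gather*}
where $V_p\colon \sum b_m q^m\mapsto\sum b_m q^{pm}$; since $f|U_p$ has $\F_p$-coefficients, $(f|U_p)|V_p\equiv (f|U_p)^p\Mod{p}$. We may assume $f|U_p\ne 0$, since otherwise there is nothing to prove. I would then invoke the identity $w_\G(g^p)=p\,w_\G(g)$, valid for every nonzero mod $p$ modular form $g$ on $\G_0(N)$, together with the fact recalled just above in the text that $U_p$ preserves modularity mod $p$ on $\G_0(N)$, and the elementary fact that the filtration of a sum is at most the larger of the two filtrations, to conclude
\begin{align*}
p\cdot w_\G(f|U_p)=w_\G\big((f|U_p)^p\big)=w_\G\big(f-\theta^{p-1}f\big)&\le\max\big\{w_\G(f),\ w_\G(\theta^{p-1}f)\big\}\\
&\le w_\G(f)+p^2-1.
\end{align*}
Dividing by $p$ gives $w_\G(f|U_p)\le p+\frac{w_\G(f)-1}{p}$, which is the asserted bound. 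For the final statement, $p+\frac{w_\G(f)-1}{p}<w_\G(f)$ rearranges to $p^2-1<(p-1)w_\G(f)$, i.e., to $w_\G(f)>p+1$.

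The genuine content, packaged into the cited facts, is the theta-operator filtration estimate itself (the heart of \cite{Jochnowitz82}, which must be checked to survive the passage from $\SL_2(\Z)$ to $\G_0(N)$ with $p\nmid N$) and the $p$-th power identity $w_\G(g^p)=p\,w_\G(g)$, whose level-one proof rests on the Hasse invariant $E_{p-1}$ being squarefree in $\F_p[E_4,E_6]$ (it has simple zeros) and which carries over to $\G_0(N)$. The one point in the logic deserving care is that the chain above uses $w_\G\big((f|U_p)^p\big)\ge p\,w_\G(f|U_p)$, i.e., a \emph{lower} bound on the filtration of a $p$-th power; the easy inequality runs the other way, which is precisely why the $p$-th power identity, and not merely the modularity of $f|U_p$, is needed. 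Everything else is bookkeeping with $q$-expansions.
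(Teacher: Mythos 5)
The paper does not actually prove this lemma --- it quotes it from \cite{Jochnowitz82} --- and your argument is precisely the standard proof given there: decompose $f\equiv \theta^{p-1}f+(f|U_p)^p \pmod{p}$ and combine $w_\G(\theta h)\le w_\G(h)+p+1$ with $w_\G(g^p)=p\,w_\G(g)$. Your write-up is correct, including the identification of the one delicate point (the lower bound $w_\G\big((f|U_p)^p\big)\ge p\,w_\G(f|U_p)$ coming from the $p$-th power identity rather than from mere modularity of $f|U_p$), and the two inputs you cite are indeed available for $\G_0(N)$ with $p\nmid N$ and $p\ge 5$ (Katz, Jochnowitz), which are the standing hypotheses of this section.
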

We give a suitable modification of Lemma~\ref{lemma:joch_filtration_facts} for $(\G,\eta)$.
\begin{Lemma}\label{lemma:filtration_facts}
Let $\G$ be an $n|h$-type group with eigenvalue map $\eta$. For $f\in \tilde M(\G,[\eta]_p)^\alpha$,
 \begin{gather*}w_{\G,\eta}(f|U_p)\le p + \frac{w_{\G,\eta}(f) - 1}{p}.\end{gather*}
\end{Lemma}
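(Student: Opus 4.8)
The plan is to reduce Lemma~\ref{lemma:filtration_facts} to the classical statement in Lemma~\ref{lemma:joch_filtration_facts} by tracking how the eigenvalue-map decomposition interacts with the $U_p$ operator and the twist. Recall that by Lemma~\ref{lemma:eigenmap_decomp} we have $\tilde M_k(\G_\eta) = \bigoplus_a \tilde M_k(\G,\eta^{\sigma_a})$, and that $M_k(\G,\eta')\subseteq M_k(\G_0(nh))$ for every eigenvalue map $\eta'$. So a form $f\in\tilde M(\G,[\eta]_p)^\alpha$ is in particular a mod $p$ modular form on $\G_0(nh)$ to which the classical filtration $w_{\G_0(nh)}$ applies. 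The key point to establish is the comparison between $w_{\G,\eta}(f)$ and $w_{\G_0(nh)}(f)$: I would argue that these two filtrations differ by a controlled amount. Since $\hat F\in M_{p-1}(\G,\bm1_t)$ is congruent to a nonzero constant mod $p$ (Proposition~\ref{prop:twisted_inclusions}(a)) and similarly there is an analogous form realizing the untwisted weight $2(p-1)$, multiplying by powers of $\hat F$ lets one move between weight classes $\alpha$ and $\alpha+(p-1)$ with a twist of $\eta$, exactly as encoded in the definition of $\tilde M(\G,[\eta]_p)^\alpha$ and the filtration $w_{\G,\eta}$. The upshot should be that if $w_{\G_0(nh)}(f)=k_0$, then $w_{\G,\eta}(f)=k_0$ as well, or at worst $k_0$ and $k_0+(p-1)$ agree as possibilities — in any case the relevant minimum is the same number.

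Granting that, the plan proceeds as follows. First, fix $f\in\tilde M(\G,[\eta]_p)^\alpha$ and set $k=w_{\G,\eta}(f)$; I would show $f$, viewed as a mod $p$ form on $\G_0(nh)$, has classical filtration $w_{\G_0(nh)}(f)=k$ using the argument above — the nonzero-constant forms $\hat F$ (weight $p-1$, twisting $\eta\mapsto\eta_t$) and an analogous weight-$2(p-1)$ untwisted form witness all the relevant inclusions in both directions, so no weight below $k$ (in the appropriate residue class) can carry $f$ over $\G_0(nh)$ either. Second, apply Lemma~\ref{lemma:joch_filtration_facts} over $\G_0(nh)$ to get $w_{\G_0(nh)}(f|U_p)\le p+\frac{k-1}{p}$. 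Third, I must promote this bound back up to the filtration $w_{\G,\eta}(f|U_p)$. Here I use that $U_p\colon\tilde M_j(\G,\eta')\to\tilde M_j(\G,(\eta')^{\sigma_p})$ (established in Section~\ref{ssec:S-hecke} via $T_p$ and Lemma~\ref{lemma:up_nh_facts}), so $f|U_p$ stays in $\tilde M(\G,[\eta]_p)^\alpha$; then a realization of $f|U_p$ in weight $j:=w_{\G_0(nh)}(f|U_p)$ as a form on $\G_0(nh)$ can be pulled back to the correct $(\G,\eta')$-eigenspace — since it already is $U_p$ of something on $(\G,\eta')$, it automatically transforms correctly under $\G$ — giving $w_{\G,\eta}(f|U_p)\le j\le p+\frac{k-1}{p}$.

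The main obstacle I anticipate is the precise bookkeeping in the first step: showing that the twisted-weight filtration $w_{\G,\eta}$ over $(\G,\eta)$ literally coincides with the ordinary filtration $w_{\G_0(nh)}$ over $\G_0(nh)$, rather than merely being comparable up to $p-1$. The subtlety is that a form might a priori drop weight over the bigger group $\G_0(nh)$ by acquiring a nebentypus or a non-eigen transformation behaviour under the cosets $W_e$ that is not available within the $(\G,\eta)$-decomposition; I would need to rule this out using the fact (cited from \cite{Katz73}) that $p$-adic forms here have trivial nebentypus together with the $\G$-equivariance of the whole eigenspace decomposition. If that identification turns out to be only an inequality in one direction, the bound still survives provided the discrepancy is at most $p-1$, since $p+\frac{(k-1)+(p-1)}{p}$ versus $p+\frac{k-1}{p}$ only differ by $\frac{p-1}{p}<1$ and filtrations are integers in the fixed residue class mod $p-1$; so even a slightly weaker comparison suffices, which is reassuring. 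The remaining steps are routine given the machinery already assembled in Sections~\ref{sec:technical} and~\ref{sec:serre}.
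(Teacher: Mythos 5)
There is a genuine gap, and it sits exactly where the real work of the paper's proof lies. Your Step 1 is fine but only the trivial direction is needed: any witness over $(\G,\eta)$ or $(\G,\eta_t)$ is in particular a form on $\G_0(nh)$, so $w_{\G_0(nh)}(f)\le w_{\G,\eta}(f)$, and Lemma~\ref{lemma:joch_filtration_facts} then gives $w_{\G_0(nh)}(f|U_p)\le p+\frac{w_{\G,\eta}(f)-1}{p}$ with no need for the equality of filtrations you worry about. The problem is Step 3. The output of Lemma~\ref{lemma:joch_filtration_facts} is a $p$-integral form $g$ of low weight on $\G_0(nh)$ that is merely \emph{congruent} mod $p$ to $f|U_p$; the filtration $w_{\G,\eta}(f|U_p)$, however, is defined via the existence of a characteristic-zero witness of that low weight lying in $M_{k'}^{(p)}(\G,[\eta]_p)$ or $M_{k'}^{(p)}(\G,[\eta_t]_p)$, i.e., transforming correctly under the cosets $x$, $y$, $W_e$ of $\G/\G_0(nh)$. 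Your claim that $g$ ``automatically transforms correctly under $\G$'' because $f|U_p$ does is false: a congruence of $q$-expansions mod $p$ does not transfer the $\G$-transformation behaviour to the low-weight lift, which a priori is only $\G_0(nh)$-modular.

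Repairing this is precisely the content of the paper's proof, which you have skipped. The paper argues by induction on $[\G:\G_0(nh)]$, adjoining one generator $T$ (or one Atkin--Lehner involution $W_e$) at a time, and replaces $g$ by its projection $\pi(g)=\frac1t\sum_{\ell}{\rm e}^{-2\pi{\rm i}\ell m/t}\,g|_{k'}T^{\ell}$ onto the eigenspaces singled out by $[\eta]_p$. One must then check that $\pi(g)\equiv f|U_p\Mod{p}$, and this is where the auxiliary inputs enter: $p\ge5$ and $h\mid24$ give $p\nmid t$, so the factor $1/t$ preserves $p$-integrality and projection commutes with reduction mod $p$ for forms of equal weight; equal weights are arranged by multiplying by powers of $\hat F$ from Proposition~\ref{prop:twisted_inclusions}(a); and since $\hat F$ is only an eigenform with eigenvalue $\qr{e}{p}$ under $W_e$ (not invariant), the projected witness may land in the twisted space $M_{k'}^{(p)}(\G'',[\eta''_t]_p)$ --- this is exactly why the filtration $w_{\G,\eta}$ is defined with the twist, a feature your proposal never engages with. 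Without this eigenprojection-plus-$\hat F$ argument (or some substitute for it), the promotion of the $\G_0(nh)$-bound to $w_{\G,\eta}(f|U_p)$ is unproved, so the proposal as written does not establish the lemma.
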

\begin{proof}
 We first consider the special case $\G = \G_0(n|h)$. We proceed by induction on the finite index $[\G:\G_0(nh)]$. Suppose that for some $\G_0(nh)\le \G'\lneq \G_0(n|h)$ we have
 \begin{gather}\label{eqn:filtration_ineq}
 w_{\G',\eta|_{\G'}}(f|U_p)\le p + \frac{w_{\G',\eta|_{\G'}}(f) - 1}{p}.
 \end{gather}
 Let $T\in \G_0(n|h)/\G'$ be a representative for a nontrivial coset. Setting $\G'' = \langle \G',T\rangle$ and $\eta'' = \eta|\G''$, we will prove that (\ref{eqn:filtration_ineq}) also holds for $(\G'',\eta'')$.

 Let $f\in M_k^{(p)}(\G'',[\eta'']_p)\subseteq M_k^{(p)}(\G',[\eta|\G']_p)$ be a modular form of filtration $k$. By assumption there is some $g\in M_{k'}^{(p)}(\G',[\eta|\G']_p)$ with $g\equiv f|U_p\Mod{p}$ and of filtration
 \begin{gather*}k'\le p + \frac{k - 1}{p}.\end{gather*}
 %such that $g\equiv f|U_p\Mod{p}$.

 \noindent Let $t$ be the order of $T$ in $\G''/\G_0(nh)$, and let $\pi_m$ be the projection
 \begin{align*}
 \pi_m(g)=\frac{1}{t}\sum_{\ell = 0}^{t - 1} {\rm e}^{-2\pi {\rm i}\ell/t}g|_{k'}T^\ell
 \end{align*}
 onto the ${\rm e}^{2\pi {\rm i} m/t}$ eigenspace of $T$. Since $p\geq5$, we know $p\nmid t$. Thus, if $\varphi$, $\varphi'$ have the same weight and $\varphi\equiv\varphi'\Mod{p}$, then $\pi_m(\varphi)\equiv\pi_m(\varphi')\Mod{p}$.

 By Proposition~\ref{prop:twisted_inclusions}, multiplying by some power of $\hat F$ gives $f'\in M_k^{(p)}(\G',[\eta|\G']_p)$ with $f'\equiv g$ $\Mod{p}$. Since $\hat F$ is invariant under $\G_0(n|h)$, we have
 \begin{align*}
 \pi_m(g)\equiv\pi_m(f')\equiv\pi_m(f)\Mod{p}.
 \end{align*}
 Let $m$ be such that $\eta(T) = {\rm e}^{2\pi {\rm i} m/t}$ and set
 \begin{gather*}
 \pi=
 \begin{cases}
 \pi_m & \text{if } \pi_{pm}=\pi_{m}, \\
 \pi_m\oplus \pi_{pm} & \text{otherwise},
 \end{cases}
 \end{gather*}
 which is projection onto the span of the eigenspaces of $T$ specified by $[\eta]_p$. Thus, we have $\pi(g)\equiv \pi(f)=f\Mod{p}$. Then $\pi(g)\in M_{k'}^{(p)} (\G'',[\eta'']_p)$ with $\pi(g)\equiv f\Mod{p}$ as desired.

 Extending by Atkin--Lehner involutions $W_e$ is a similar computation. Define the projections as before, replacing $T$ with the order $t=2$ action $W_e$. The function $\hat{F}$ is not necessarily fixed by $W_e$, but rather lies in the $\big(\frac{e}{p}\big)$-eigenspace of $W_e$. Thus
 \begin{align*}
 \pi_{m+\varepsilon}(g)=\pi_m(f')\equiv\pi_m(f)\Mod{p}
 \end{align*}
 where $\varepsilon=\frac12\big(1-\big(\frac{e}{p}\big)\big)$, and we have $\pi(g)\in M_{k'}^{(p)} (\G'',[\eta'']_p)$ or $\pi(g)\in M_{k'}^{(p)} (\G'',[\eta''_t]_p)$.
\end{proof}

\begin{proof}[Proof of Proposition~\ref{prop:ordinary_decomp}] Let $k$ be as in the statement of Proposition~\ref{prop:ordinary_decomp}. Set
 \begin{gather*}\mf S(\G,[\eta]_p)^\alpha = \begin{cases}\bigcap_{n\ge 1}\im U_p^n|\tilde M_k(\G,[\eta]_p) &\text{ if }k\equiv \alpha\Mod{2p - 2}, \\ \bigcap_{n\ge 1}\im U_p^n|\tilde M_k(\G,[\eta_t]_p) &\text{ otherwise.}\end{cases}\end{gather*}
 Also set
 \begin{gather*}\mf N(\G,[\eta]_p)^\alpha = \bigcup_{n\ge 1}\ker U_p^n|\tilde M(\G,[\eta]_p)^\alpha.\end{gather*}
 Lemma~\ref{lemma:filtration_facts} shows that these spaces satisfy the conditions of the proposition.
\end{proof}

Proposition~\ref{prop:ordinary_decomp} will be fundamental for proving $p$-adic annihilation in Section~\ref{sec:annihilation}.

\section[$p$-adic annihilation]{$\boldsymbol{p}$-adic annihilation}\label{sec:annihilation}
In this section, we restrict our focus to the $n|h$-type groups appearing in monstrous moonshine, and we prove the following theorem, first mentioned in the introduction, which gives a class of Hauptmoduln from monstrous moonshine that are $p$-adically annihilated for small primes~$p$.

\begin{Theorem}\label{theorem:Up_annihilation} Let $p$ be a prime. If $\G$ is an $n|h$-type group as specified in Table~{\rm \ref{tbl:Up_annihilation}}, then the Hauptmodul on $\G_\lambda$ is $p$-adically annihilated by $U_p$.
 \begin{table}[ht] \centering\scriptsize
 \begin{tabular}{ l l|l|l|l|l }
 \multicolumn{2}{c|}{$2$} & \multicolumn{1}{c|}{$3$} & \multicolumn{1}{c|}{$5$} & \multicolumn{1}{c|}{$7$} & \multicolumn{1}{c}{$11$} \\
 \hline
 $1$ & $20|2{+}5$ & $1$ & $1$ & $1$ & $1$ \\
 $2{+}$ & $20|2{+}10$ & $2{+}$ & $2{+}$ & $2{+}$ & $3|3$ \\
 $2$ & $22{+}$ & $3{+}$ & $3{+}$ & $3|3$ & $4|2{+}$ \\
 $3{+}$ & $22{+}11$ & $3$ & $3|3$ & $4|2{+}$ & $11{+}$ \\
 $3|3$ & $24|2{+}$ & $3|3$ & $4|2{+}$ & $7{+}$ & \\
 $4{+}$ & $24{+}$ & $6{+}$ & $5{+}$ & $7$ & \\
 $4|2{+}$ & $24|2{+}3$ & $6{+}2$ & $5$ & $8|4{+}$ & \\
 $4$ & $24|6{+}$ & $6|3$ & $6{+}$ & $14{+}$ & \\
 $4|2$ & $24|4{+}6$ & $8|4{+}$ & $6|3$ & $21|3{+}$ & \\
 $5{+}$ & $24|4{+}2$ & $9{+}$ & $7{+}$ & $28|2{+}$ & \\
 $6{+}$ & $24|2{+}12$ & $9$ & $8|4{+}$ & & \\
 $6{+}3$ & $24|12$ & $12|3{+}$ & $8|4$ & & \\
 $6|3$ & $28|2{+}$ & $12|6$ & $10{+}$ & & \\
 $8{+}$ & $28{+}7$ & $15|3$ & $10{+}2$ & & \\
 $8|2{+}$ & $28|2{+}14$ & $18{+}2$ & $12|3{+}$ & & \\
 $8|4{+}$ & $32{+}$ & $18{+}$ & $15{+}$ & & \\
 $8|2$ & $32|2{+}$ & $18$ & $15|3$ & & \\
 $8$ & $36|2{+}$ & $21|3{+}$ & $16|2{+}$ & & \\
 $8|4$ & $40|4{+}$ & $24|6{+}$ & $20|2{+}$ & & \\
 $10{+}$ & $40|2{+}$ & $24|4{+}2$ & $21|3{+}$ & & \\
 $10{+}5$ & $40|2{+}20$ & $24|12$ & $24|4{+}6$ & & \\
 $11{+}$ & $44{+}$ & $27{+}$ & $25{+}$ & & \\
 $12{+}$ & $48|2{+}$ & $30|3{+}10$ & $30{+}$ & & \\
 $12|2{+}$ & $52|2{+}$ & $36{+}4$ & $30|3{+}10$ & & \\
 $12|3{+}$ & $52|2{+}26$ & $39|3{+}$ & $35{+}$ & & \\
 $12{+}3$ & $56|4{+}14$ & $42|3{+}7$ & $40|4{+}$ & & \\
 $12|2{+}6$ & $60|2{+}$ & $54{+}$ & $50{+}$ & & \\
 $12|2{+}2$ & $60|2{+}5,6,30$ & $57|3{+}$ & & & \\
 $12$ & $60|6{+}10$ & $60|6{+}10$ & & & \\
 $12|6$ & $68|2{+}$ & $84|3{+}$ & & & \\
 $16|2{+}$ & $84|2{+}$ & $93|3{+}$ & & & \\
 $16$ & $84|2{+}6,14,21$ & & & & \\
 $16{+}$ & $88|2{+}$ & & & & \\
 $20{+}$ & $104|4{+}$ & & & & \\
 $20|2{+}$ & & & & &
 \end{tabular}
 \caption{Hauptmoduln with $p$-adic annihilation.} \label{tbl:Up_annihilation}
 \end{table}
\end{Theorem}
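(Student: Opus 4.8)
Fix a group $\G$ appearing in Table~\ref{tbl:Up_annihilation}, say a $p^\alpha n|h$-type group, and let $\mt$ be the Hauptmodul on $\G_\lambda$. If $p\mid h$, or if $\alpha\ge2$ and the hypotheses of Lemma~\ref{lemma:hauptmodul_poles}(c) are met, then $\mt|U_p=0$ by Remark~\ref{rmk:hauptmodul_killed} and there is nothing to prove; assume otherwise. The plan is a three-step pipeline. First I would invoke Lemma~\ref{lemma:hauptmoduln_are_padic_mfs} to replace $\mt$ by the weight~$0$ $p$-adic modular form $f=\mt|U_p^\beta$ on the explicit pair $(\G',\eta')$, where $\beta=\max\{1,\alpha-1\}$; it then suffices to show $f|U_p^n\to0$ $p$-adically. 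Second, I would use the ordinary decomposition of Proposition~\ref{prop:ordinary_decomp} to reduce this to the vanishing of the $p$-adic ordinary projection of $f$. Third, I would certify that vanishing by a finite computation, controlling the poles of the $\mt|U_p^n$ with Lemmas~\ref{lemma:hauptmodul_poles}, \ref{lemma:pole_bound} and~\ref{lemma:delta_quotient} and making the check effective with Sturm's bound (Lemma~\ref{lemma:sturm}), while using the power-map relations behind Theorem~\ref{thm:power} to cut down the number of groups requiring direct treatment.

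For the second step: reducing $f$ modulo $p$ and applying Lemma~\ref{lemma:filtration_facts} repeatedly, the filtration of $\overline{f|U_p^N}$ drops into the stable range $\le p+1$, so for $N\gg0$ this reduction lies in $\tilde M(\G',[\eta']_p)^0=\mf S(\G',[\eta']_p)^0\oplus\mf N(\G',[\eta']_p)^0$. On $\mf N^0$ the operator $U_p$ is locally nilpotent, while on the finite-dimensional ordinary space $\mf S^0$ it is bijective with $p$-integral inverse, so a nonzero component there has $U_p$-orbit bounded away from $0$. Hence $f|U_p^n\to0$ if and only if the $p$-adic ordinary projection $ef=\lim_j f|U_p^{j!}$, which lives in a finite-rank $\Z_p$-lattice on which $U_p$ acts invertibly and whose reduction modulo $p$ is $\mf S^0$, equals $0$.

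For the third step: the cusps at which $\mt|U_p^n$ can have poles are pinned down by Lemma~\ref{lemma:hauptmodul_poles} (in fact only cusps equivalent to~$0$ over $\G_0(p)$, since $\mt|U_p^n$ is bounded at $\infty$), their orders are bounded by Lemma~\ref{lemma:pole_bound}, and multiplying by a suitable power of the form $g$ of Lemma~\ref{lemma:delta_quotient} --- which is $\equiv1\Mod p$, hence invisible modulo~$p$, and has zeros of large order at exactly those cusps --- turns $\mt|U_p^n$ into a holomorphic modular form of bounded weight on some $\G_0(M)$. Sturm's bound then reduces the vanishing of $\overline{\mt|U_p^n}$ to finitely many Fourier coefficients, so the $\mf S^0$-component of $\overline f$ can be computed, and checking it is $0$ rules out all but the trivial $U_p$-orbit modulo~$p$. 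To upgrade this to $ef=0$, I would iterate: once $\mt|U_p^{n_0}\equiv0\Mod p$, replace $f$ by $p^{-1}(\mt|U_p^{n_0})$, again a weight~$0$ $p$-adic modular form on $(\G',\eta')$ up to a twist, and repeat; the relevant bounds (Sturm, pole orders, and the fixed rank of the ordinary lattice) are uniform across stages, so finitely many rounds certify $ef=0$. Here the power-map structure of Theorem~\ref{thm:power} does genuine work: replicability of the McKay--Thompson series gives, for each prime $p$, an identity of the shape $\mt_{g^p}(p\tau)+p\,\mt_g|U_p=P(\mt_g)$ with $P$ monic of degree $p$, which combined with $\mt_g^{\,p}\equiv\mt_g(p\tau)\Mod p$ and Remark~\ref{rmk:haupt_poly} transports $p$-adic annihilation along the trees of conjugacy classes in Figure~\ref{fig:2web} and its analogues, so that only the roots of those trees need be handled directly.

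The hard part will be this last step: no single conceptual reason forces the $p$-adic (as opposed to merely mod~$p$) ordinary projection to vanish --- already for $J$ this vanishing is the content of the Lehner--Atkin congruences~\eqref{eqn:j_congruences} --- so one is reduced to pushing the explicit computation to enough $p$-adic precision, exploiting finiteness of the ordinary lattice, or to leveraging the replicability identities to reduce to a handful of base cases. A secondary difficulty is that the $p$-adic theory of Section~\ref{sec:serre} is developed only for $p\ge5$, whereas $p=2$ and $p=3$ account for most of Table~\ref{tbl:Up_annihilation}; these primes require a parallel development (the Serre--Katz theory does extend to $p\in\{2,3\}$ with the customary modifications) or a reduction of the Hauptmoduln in question to $J$ via the trace and power maps.
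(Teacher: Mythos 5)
Your pipeline for $p\ge 5$ (Lemma~\ref{lemma:hauptmoduln_are_padic_mfs}, then the ordinary decomposition of Proposition~\ref{prop:ordinary_decomp}, then a finite check) matches the paper's up to the decisive point of \emph{what} must be checked, and there the proposal has a genuine gap. You reduce annihilation to the vanishing of a $p$-adic ordinary projection $ef$ of the specific form $f=\mt|U_p^\beta$, and then admit you can only verify congruences to finite precision; the assertion that ``finitely many rounds certify $ef=0$'' is unjustified, since nothing in your iteration guarantees that the mod-$p$ check at the next stage will succeed, and exact vanishing of a $p$-adic quantity is never forced by finitely many successful congruence checks. The missing idea is the paper's Lemma~\ref{lemma:small_dim_annihilation}: what one computes, once and mod $p$ only, is that the \emph{ordinary space itself} satisfies $\mf S(\G',[\lambda]_p)^0\subseteq\F_p$ for the finitely many target groups $\G'$ of Table~\ref{tbl:annihilated_Gammas} (a linear-algebra computation inside $\tilde M_{p-1}$). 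Since $U_p$ preserves constant terms and $f$ has constant term $0$, the mod-$p$ ordinary component of every iterate --- and of every divide-by-$p$ quotient --- is then automatically a constant with zero constant term, hence zero, so your divide-by-$p$ iteration succeeds at every stage with no further computation and $v_p\big(f|U_p^n\big)\to\infty$ follows formally. In other words, the criterion is a property of the pair $(\G',\lambda)$, not of the individual form, and this is exactly what converts your ``hard part'' into a single finite mod-$p$ check; without it your argument for $p\ge5$ does not close.

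The second gap is $p=2,3$, which account for most of Table~\ref{tbl:Up_annihilation} and which you defer to ``a parallel development with the customary modifications'' or ``a reduction to $J$''; neither is carried out and neither suffices as stated. The filtration machinery of Section~\ref{sec:serre} genuinely uses $p\ge5$ (e.g., $p\nmid t$ in the proof of Lemma~\ref{lemma:filtration_facts}), and the power-map/compression relations do not reduce everything to level $1$: after applying Proposition~\ref{prop:compression} (each instance of which the paper must itself verify by a Sturm-bound coefficient check via Lemma~\ref{lemma:pole_bound}) and discarding the $\mt|U_p=0$ cases of Remark~\ref{rmk:hauptmodul_killed}, one is still left with the six groups of Corollary~\ref{cly:compressed_check} --- $6{+}2$, $6{+}3$, $10{+}5$, $22{+}11$, $6|3$, $24|4{+}2$ --- and the paper handles these by a separate explicit argument (Proposition~\ref{prop:cooking}): writing the unnormalized Hauptmodul as an eta-quotient, deriving a functional equation of the shape $p(\mt|U_p)=-p^e\mathfrak{T}^{-1}+c$ from expansions at the cusps, and running Lehner's Newton-sum induction to obtain explicit rates $v_p\big(\mt|U_p^n\big)\ge\lfloor n\alpha\rfloor$. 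Some argument of this kind (or a genuine extension of the ordinary-space theory to $p=2,3$) is required, and the proposal contains neither.
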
For all Hauptmoduln $\mt$ appearing in monstrous moonshine, we computed $\mathcal T|U_p^n$ for small values of $n$ and small primes $p$. These data, as well as heuristics governing the sizes of the ordinary spaces $\mf S(\Gamma,[\lambda_t]_p)^0$ for $n|h$-type groups $\Gamma$, lead us to the following conjecture (the relevancy of the ordinary space to $p$-adic annihilation is discussed in Section~\ref{ssec:A-ordinary}).
\begin{Conjecture}\label{conj:annihilation_converse} The converse to Theorem~{\rm \ref{theorem:Up_annihilation}} holds, meaning that the Hauptmoduln of Tab\-le~{\rm \ref{tbl:Up_annihilation}} are the only Hauptmoduln appearing in monstrous moonshine with $p$-adic annihilation. In particular, no Hauptmodul appearing in monstrous moonshine is $p$-adically annihilated for $p\ge 13$.
\end{Conjecture}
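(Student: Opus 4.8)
The plan is to reduce $p$-adic annihilation to a single nonvanishing statement in the ordinary space, and then to argue that this nonvanishing holds for every Hauptmodul outside Table~\ref{tbl:Up_annihilation}. The usable half of the criterion is the \emph{necessary} direction. Let $\G$ be a genus zero $p^\alpha n|h$-type group with $p\nmid nh$ whose Hauptmodul $\mt$ lies on $(\G,\lambda)$. By Lemma~\ref{lemma:hauptmoduln_are_padic_mfs}, $\mt|U_p^\beta$ is a weight $0$ $p$-adic modular form on $(\G',\lambda')$, so its mod $p$ reduction $\overline{\mt}$ lies in $\tilde M(\G',[\lambda'_t]_p)^0$. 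Writing $\overline{\mt}=s+n$ along the decomposition $\mf S(\G',[\lambda'_t]_p)^0\oplus \mf N(\G',[\lambda'_t]_p)^0$ of Proposition~\ref{prop:ordinary_decomp}, the $U_p$-stability of this splitting together with the bijectivity of $U_p$ on $\mf S$ shows that the ordinary part of $\overline{\mt}|U_p^m$ is $s|U_p^m$, which is nonzero for all $m$ whenever $s\neq 0$. Hence if $\mt$ is $p$-adically annihilated, so that in particular $\mt|U_p^{\beta+m}\to 0$ mod $p$, then $s=0$. The converse is therefore equivalent to showing that the ordinary projection $s$ of $\overline{\mt}$ is \emph{nonzero} for every monstrous moonshine group absent from the $p$-column of Table~\ref{tbl:Up_annihilation}.

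For $p\in\{2,3,5,7,11\}$ this is a finite, certifiable computation. By Proposition~\ref{prop:ordinary_decomp}(b) the space $\mf S(\G',[\lambda'_t]_p)^0$ sits inside an explicit $\tilde M_k(\G',\cdot)$ with $4\le k\le p+1$, a space of bounded dimension. For each of the finitely many groups not listed in the $p$-column I would compute the iterates $\mt|U_p^m\bmod p$ and check that the sequence stabilizes to a nonzero element of $\mf S$ rather than to $0$; the filtration drop of Lemma~\ref{lemma:filtration_facts} (which gives $w(\cdot|U_p)<w(\cdot)$ once the weight exceeds $p+1$) bounds the nilpotency length of $\mf N$, and Sturm's bound (Lemma~\ref{lemma:sturm}) bounds the number of Fourier coefficients needed to detect a nonzero image, so finitely many coefficients suffice. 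Groups with $p\mid h$, or to which Lemma~\ref{lemma:hauptmodul_poles}(c) applies, already satisfy $\mt|U_p=0$ by Remark~\ref{rmk:hauptmodul_killed} and so genuinely belong in the table; these are not counterexamples and must be excluded from the check.

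The claim that no moonshine Hauptmodul is annihilated for $p\ge 13$ is where the difficulty concentrates, since it must hold uniformly across all $171$ groups and all large primes. The structural dividing line is the weight of the ordinary space: Proposition~\ref{prop:ordinary_decomp}(b) pins it at $k=p-1$, and for $p\le 11$ one has $p-1\in\{1,2,4,6,10\}$, weights carrying no cusp forms on $\SL_2(\Z)$, which is ultimately what forces the vanishing in the table cases. For $p\ge 13$, by contrast, $k=p-1\ge 12$ lies at or above the threshold where cusp forms first appear, so $\mf S(\G',[\lambda'_t]_p)^0$ is generically nonzero; one would hope to conclude that $\overline{\mt}$ projects onto it nontrivially by combining a lower bound on $\dim\mf S$ (from nonvanishing of weight $p-1$ cusp forms, with a $p$-ordinarity input) with a nonvanishing test on $\overline{\mt}$ read from its first few Fourier coefficients mod $p$. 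This last nonvanishing is precisely the main obstacle: the implications $s=0\Rightarrow$ annihilation and $s\neq 0\Rightarrow$ non-annihilation are both clean, but showing $s\neq 0$ for all $171$ Hauptmoduln at every $p\ge 13$ requires either an effective global bound reducing the problem to finitely many primes, or a conceptual nonvanishing theorem for ordinary projections of Hauptmoduln. The projection onto $\mf S$ is delicate mod $p$ linear algebra with no evident closed form, and nothing structural rules out an accidental vanishing for some group at some large prime, which is exactly why the statement remains only conjectural.
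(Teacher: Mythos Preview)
The statement is a \emph{Conjecture} in the paper, not a theorem; the paper offers no proof and presents it only as supported by numerical computation of $\mt|U_p^n$ for small $n$ and $p$ together with heuristics on the sizes of the ordinary spaces. Your write-up is therefore not being compared against a proof but against an explanation of why the authors believe the statement; on that score your analysis is essentially aligned with theirs. The paper's Remark~\ref{rmk:no_annihilation_method} gives the same reduction you describe for a fixed $p$ (apply $U_p$ until the reduction lands in a finite-dimensional $\tilde M_{p-1}(nh)$, then do linear algebra), and your final paragraph correctly identifies the genuine obstruction for $p\ge 13$: one must rule out accidental vanishing of the ordinary projection for every Hauptmodul at every large prime, and no uniform mechanism is known.

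Two technical caveats on your outline. First, the ordinary decomposition of Proposition~\ref{prop:ordinary_decomp} and the filtration estimate of Lemma~\ref{lemma:filtration_facts} are stated only for $p\ge 5$, so your ``finite, certifiable computation'' for $p\in\{2,3\}$ cannot proceed through that machinery as written; the paper handles $p=2,3$ by separate explicit methods (Section~\ref{ssec:A-lehner}) and only remarks empirically that $\tilde M_4(nh)$ seems to play the analogous role. Second, your appeal to Lemma~\ref{lemma:filtration_facts} to bound the nilpotency length of $\mf N$ is incomplete on its own: the filtration drop only forces the weight down to $\le p+1$, after which one still needs finite-dimensionality of $\tilde M_k(\G',[\lambda'_t]_p)$ to bound the remaining nilpotency degree. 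Neither point is fatal to the strategy, but both would need to be addressed in an actual proof, which, as you correctly conclude, does not currently exist.
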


\begin{Remark}\label{rmk:no_annihilation_method} Given an admissible $n|h$-type group $\G$, we remark here on a method for showing~$\mt_\G$ is not $p$-adically annihilated for a given $p$. By Lemma~\ref{lemma:hauptmoduln_are_padic_mfs}, after applying $U_p$ enough times we may assume that $p\nmid nh$. Then after further applying $U_p$, we have that $\mt|U_p^\alpha\in \tilde M_{p-1}(nh)$ for some $\alpha$. Since $\tilde M_{p-1}(nh)$ is finite-dimensional, it is then straightforward to verify that $\mt$ is not $p$-adically annihilated. For small levels and primes, this method is easy to apply; for example one finds that $J$ is not $13$-adically annihilated since $J|U_{13}^2\equiv J|U_{13}\Mod{13}$. However, this method quickly starts requiring many coefficients of the Hauptmoduln and basis elements of~$M_{p-1}(nh)$, particularly when~$p$ is large.

Empirically, it appears that for $p \!=\! 2,3$ one may use a similar approach with the space~$\tilde M_4(nh)$.
\end{Remark}

\begin{Remark} \label{rmk:J_no_annihilation} It is known that $J|U_p\not\equiv 0\Mod{p}$ for any $p\ge 13$~\cite{Serre76}, and see~\cite{Elkies05} for further study of such congruences.
\end{Remark}

In Section~\ref{ssec:A-compression}, generalizing formulas of Conway and Norton \cite{Conway79}, we write down \emph{compression formulas} which relate Hauptmoduln on different groups $n|h$-type groups appearing in Moonshine. These relations will reduce the verification of Theorem~\ref{theorem:Up_annihilation} to a smaller set of groups, which will be easier to verify computationally. In Section~\ref{ssec:A-ordinary}, we utilize the theory developed in Section \ref{sec:serre} to prove annihilation for the entries in the table of Theorem~\ref{theorem:Up_annihilation} with $p\geq 5$. In Section~\ref{ssec:A-lehner}, we use separate techniques due to Lehner and verify the remaining entries, corresponding to $p=2,3$. These techniques are sufficiently explicit to give rates of $p$-adic annihilation in certain cases. Finally, we discuss in Section~\ref{ssec:A-power} a connection between $p$-adic annihilation of Hauptmoduln and the group structure of the monster group.

\subsection{Compression formulas}\label{ssec:A-compression}
Throughout this section, we use the following notation. If $\G$ is the group $\G_0(n|h)+e_1,e_2,\dots$, then we write $\G^d$ for the group $\G_0(n'|h')+e_1',e_2',\dots$ where $n' = n/\gcd(n, d)$, $h' = h/\gcd(h, d)$, and $e_1', e_2', \dots$ are the divisors of $n'/h'$ among $e_1, e_2, \dots$. This notation comes from \cite{Conway79}, where it is explained that for any element $g$ of the monster, if $\mathcal{T}_g$ is the Hauptmodul for the group $\G_\lambda$ corresponding to $g$ from moonshine, then $\mathcal{T}_{g^d}$ is the Hauptmodul for $\G^d_\lambda$. Additionally, if $\G$ is the group $\G_0(n|h)+e_1,e_2,\dots$, then we write $\gen{\G, w_e}$ for the group $\G_0(n|h)+e_1,e_2,\dots,e_1*e,e_2*e,\dots$ generated by $\G$ and $w_e$, if it exists. In this section, we will also adopt the notation $\mt_\G$ for the Hautpmodul on $\G_\lambda$.

The formula
\begin{gather}\label{eqn:conway_compression}
 p \mathcal{T}_\G | U_p = \mathcal{T}_{\G^p} - \mathcal{T}_\G \qquad \text{if} \quad w_p\in\G
\end{gather}
relating the Hauptmodul for $g^p$ with that of $g$ for $g \in \M$ appears in \cite[Section~8]{Conway79}. The following relations are of a similar form, and they allow us to connect the $p$-adic properties of Hauptmoduln on closely related groups.

\begin{Proposition}\label{prop:compression} Let $\G$ be a $p^r n|h$-type group where $p$ is prime and $p \nmid nh$. Then, whenever all of the relevant groups appear in monstrous moonshine,
 \begin{enumerate}[$(a)$]\itemsep=0pt
 \item $p \mathcal{T}_\G | U_p = \mathcal{T}_\G - \mathcal{T}_{\gen{\G, w_p}}$
 \hspace{.49 in} if $r = 1$ and $p \nmid e$ for $w_e\in\G$,
 \item $p^2 \mathcal{T}_\G | U_p^2 = \mathcal{T}_\G - \mathcal{T}_{\G^p}$
 \hspace{.61 in} if $r = 1$ and $p \nmid e$ for $w_e\in\G$,
 \item $p \mathcal{T}_\G | U_p = \mathcal{T}_{\gen{\G^p, w_{p^{r-1}}}} - \mathcal{T}_{\G^p}$
 \hspace{.15 in} if $r > 1$ and $w_{p^r}\in\G$, and
 \item $\mathcal{T}_\G | U_p = - \mathcal{T}_{\G^p} | U_p$
 \hspace{.8 in} if $r = 2$ and $w_{p^2}\in\G$.
 \end{enumerate}
\end{Proposition}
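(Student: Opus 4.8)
The plan is to prove the four identities together by combining three ingredients: Lehner's coset description of $U_p$ via the matrices $S_\mu=\left(\begin{smallmatrix}1&\mu\\0&p\end{smallmatrix}\right)$ together with $A=\left(\begin{smallmatrix}p&0\\0&1\end{smallmatrix}\right)$, the known Conway--Norton formula~\eqref{eqn:conway_compression}, and the fact that each $\mt_\G$ is the unique Hauptmodul on $\G_\lambda$ (so two weakly holomorphic weight-zero functions agree once they have the same principal parts and the same invariance group). The key point is that for weight $0$, $p\,\mt_\G|U_p = \sum_{\mu=0}^{p-1}\mt_\G|_0 S_\mu$, and the matrix $A=\left(\begin{smallmatrix}p&0\\0&1\end{smallmatrix}\right)$ together with the $S_\mu$ gives, up to $\SL_2(\Z)$-translates, a full set of degree-$p$ Hecke representatives; the $\mt_\G|_0 A$ term is exactly $\mt_\G(p\tau)$, which by~\eqref{eqn:exact_divisor_action}-type reasoning is a Hauptmodul-type function on a related $p^{r+1}n|h$-group. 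So each identity will follow by writing out $\sum_\mu \mt|_0 S_\mu$, recognizing it via Lemma~\ref{lemma:up_nh_facts} as modular on the appropriate reduced-level $n|h$-group with the appropriate eigenvalue map, checking its pole order at the cusps $\{\infty\}\cup\{s:s\xsim{p}0\}$ via Lemma~\ref{lemma:hauptmodul_poles}, and matching principal parts with a suitable combination of $\mt_\G$, $\mt_{\G^p}$, and $\mt_{\gen{\cdot,w_\cdot}}$.

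The steps, in order, are as follows. First, for part~(a): when $r=1$, Lemma~\ref{lemma:hauptmodul_poles}(a) tells us $p\,\mt_\G|U_p$ is a weakly holomorphic weight-$0$ form on $\G'_\lambda$ where $\G'$ is the $n|h$-group with the same odd-$e$ Atkin--Lehner elements as $\G$, with poles only at $\infty$ and at $s\xsim{p}0$. Since $p\nmid e$ for all $w_e\in\G$ and $\G'$ is genus zero (it appears in moonshine by hypothesis), one computes the $q$-expansion at $\infty$: the $\mu=0$ term contributes $q^{-1}\mapsto$ the leading behavior, and averaging kills the $q^{-1}$ pole, leaving $p\,\mt_\G|U_p = c + O(q)$ for a constant $c$. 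At the cusp $0$ (equivalently, after hitting with $w_p$, at $\infty$), Lemma~\ref{lemma:up_basic_facts}(a)/\ref{lemma:up_nh_facts}(a) shows $(\mt_\G|U_p)|_0 w_p = (\mt_\G|_0 w_p)|U_p$, and since $w_p\notin\G$ this produces the pole; tracking the leading coefficient identifies it as $-\mt_{\gen{\G,w_p}}$'s pole (the sign coming from the $1/p$ normalization in $U_p$ versus the Hauptmodul normalization). Thus $\mt_\G + p\,\mt_\G|U_p$ and $-\mt_{\gen{\G,w_p}}$ are both on $\gen{\G,w_p}_\lambda$ with the same principal part, hence equal up to a constant, which vanishes after normalizing — giving~(a). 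Part~(b) follows by applying~(a) twice, or by applying $U_p$ to~(a) and using~\eqref{eqn:conway_compression} on $\gen{\G,w_p}$ to eliminate the middle term: $p^2\mt_\G|U_p^2 = p(\mt_\G|U_p) - p(\mt_{\gen{\G,w_p}}|U_p) = (\mt_\G - \mt_{\gen{\G,w_p}}) - (\mt_{\gen{\G,w_p}}^p - \mt_{\gen{\G,w_p}}) = \mt_\G - \mt_{\G^p}$, using that $\gen{\G,w_p}^p = \G^p$ and that $w_p\in\gen{\G,w_p}$ so~\eqref{eqn:conway_compression} applies. Parts~(c) and~(d) are the higher-$r$ analogues: when $r\ge 2$ and $w_{p^r}\in\G$, Lemma~\ref{lemma:hauptmodul_poles} and the $U_p$-pole bookkeeping (now with $\beta=r-1$) give that $p\,\mt_\G|U_p$ lives on $\gen{\G^p,w_{p^{r-1}}}_\lambda$, and~\eqref{eqn:conway_compression} applied to $\G^p$ (which contains $w_{p^{r-1}}$ since $w_{p^r}\in\G$ forces $w_{p^{r-1}}\in\G^p$) pins down the principal part as $\mt_{\gen{\G^p,w_{p^{r-1}}}} - \mt_{\G^p}$; for~(d), when $r=2$ we have $\gen{\G^p,w_p} \supseteq \G^p$ but the compression formula~\eqref{eqn:conway_compression} for $\G^p$ reads $p\,\mt_{\G^p}|U_p = \mt_{(\G^p)^p} - \mt_{\G^p} = \mt_{\G^{p^2}} - \mt_{\G^p}$, and comparing with~(c) for $\G$ (whose $\G^p$ now has $r=1$) yields $\mt_\G|U_p = -\mt_{\G^p}|U_p$ after the $\gen{\G^p,w_p}$ terms cancel against an application of~\eqref{eqn:conway_compression}.

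The main obstacle I expect is the \emph{bookkeeping of eigenvalue maps and normalizing constants}: one must verify at each step that the Conway--Norton eigenvalue map $\lambda$ behaves correctly under $U_p$ (this is exactly Lemma~\ref{lemma:up_nh_facts}(b) with $\lambda^{\sigma_p}=\lambda$ since $p\nmid h$, so $p^2\equiv 1\pmod h$), that the group $\G'$ or $\gen{\G^p,w_{p^{r-1}}}$ produced by $U_p$ is genuinely the $\lambda$-fixing group appearing in moonshine and not merely some $n|h$-group, and that every additive constant introduced by matching principal parts is forced to be zero by the $q^{-1}+O(q)$ normalization (this requires knowing the constant terms, which for moonshine Hauptmoduln are all $0$). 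A secondary subtlety is confirming the asserted group-theoretic identities $\gen{\G,w_p}^p=\G^p$, $w_{p^{r-1}}\in\G^p$ when $w_{p^r}\in\G$, etc., which come down to the $*$-group structure on exact divisors described in Section~\ref{ssec:T-nhtype}; these are routine but must be stated carefully so that~\eqref{eqn:conway_compression} is legitimately applicable at each invocation.
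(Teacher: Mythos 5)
Your strategy is genuinely different from the paper's: the paper proves Proposition~\ref{prop:compression} by a finite verification, noting that only finitely many moonshine groups satisfy each hypothesis, bounding the pole orders of $\mt_\G|U_p$ via Lemma~\ref{lemma:pole_bound}, and then reducing each identity to a Sturm-bound check of finitely many coefficients ($2500$ suffice). A conceptual principal-part argument of the kind you sketch could in principle replace this, but as written it breaks at the decisive step: identifying the principal part of $\mt_\G|U_p$ at the cusps $s\xsim{p}0$. You justify this by commuting $U_p$ past $w_p$, citing Lemma~\ref{lemma:up_basic_facts}(a) and Lemma~\ref{lemma:up_nh_facts}(a), but those lemmas give commutation only with $W_e$ for $e$ coprime to $p$ (and with $x^p$, $y^p$); $U_p$ does not commute with $w_p$. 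Lemma~\ref{lemma:hauptmodul_poles} only locates where poles \emph{may} occur; it says nothing about the principal parts, and computing the expansion of $\mt_\G|U_p$ at $0$ is exactly the nontrivial calculation (compare the proof of Proposition~\ref{prop:cooking}, where this is done case by case via eta-quotients and the $\eta$ functional equation).

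The concluding matching step is also unjustified. You assert that $\mt_\G\pm p\,\mt_\G|U_p$ is modular on $\gen{\G,w_p}_\lambda$ (besides the sign slip: it is $p\,\mt_\G|U_p-\mt_\G$, not $\mt_\G+p\,\mt_\G|U_p$, that should equal $-\mt_{\gen{\G,w_p}}$). Nothing you have established gives invariance under $w_p$: by the weight-zero case of Lemma~\ref{lemma:trace_basic}(b) applied to $\mt_\G|_0W_p$, one has $p\,\mt_\G|U_p=\tr_{pnh\lmod nh}(\mt_\G|_0W_p)-\mt_\G|_0W_p$, so $w_p$-invariance of $p\,\mt_\G|U_p-\mt_\G$ amounts to the trace being $W_p$-invariant, i.e.\ constant --- which is essentially the identity you are trying to prove, so this step begs the question; the uniqueness argument must instead be run on the smaller group $\G'_{\eta'}$ of Lemma~\ref{lemma:up_nh_facts}(b), where one needs the full principal part at every cusp. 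The same issue recurs in (c), where $p\,\mt_\G|U_p$ is not invariant under $w_{p^{r-1}}$; moreover the parenthetical claim that $w_{p^r}\in\G$ forces $w_{p^{r-1}}\in\G^p$ is false (otherwise $\gen{\G^p,w_{p^{r-1}}}=\G^p$ and (c) would read $\mt_\G|U_p=0$). Finally, your derivations of (b) from (a) and of (d) from (a) and (c) invoke~(\ref{eqn:conway_compression}) for the intermediate groups $\gen{\G,w_p}$ and $\gen{\G^p,w_p}$, hence require those groups to appear in moonshine; as the Remark following the proposition points out, (b) and (d) are needed and true even when these groups are not admissible (e.g.\ $4\,\mt_{6|3}|U_2^2=\mt_{6|3}-\mt_{3|3}$ although $6|3{+}$ is not admissible), so those cases are left unproven. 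To repair the argument you would either have to compute the cusp expansions explicitly as in Proposition~\ref{prop:cooking}, or fall back on the paper's pole-bound-plus-Sturm finite check.
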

\begin{Remark} If $r = 1$, $p \nmid e$ for $w_e \in \G$, and $\gen{\G, w_p}$ appears in monstrous moonshine, then part~(b) follows from part~(a) and~(\ref{eqn:conway_compression}). However, part (b) holds even when $\gen{\G, w_p}$ is not admissible. For example $4 \mathcal{T}_{6|3} | U_2^2 = \mathcal{T}_{6|3} - \mathcal{T}_{3|3}$ even though $6|3{+}$ is not admissible. Similarly, if $r = 2$, $w_{p^2}\in\G$, and $\gen{\G^p, w_p}$ appears in monstrous moonshine, then part~(d) follows from parts~(a) and~(c), but part~(d) holds even when $\gen{\G^p, w_p}$ is not admissible.
\end{Remark}

In each case, there are only finitely many Hauptmoduln satisfying the hypotheses, and for each, we may apply Lemma~\ref{lemma:pole_bound}, use Sturm's bound, and check that sufficiently many of the coefficients are zero ($2500$ coefficients suffice in all cases). These relations reduce the number of Hauptmoduln one needs to check to show that the Hauptmoduln in Theorem~\ref{theorem:Up_annihilation} are indeed annihilated. For example, (a) implies that if $\G$ is an $pn|h$-type group with $p \nmid n$ and $\mathcal{T}_\G$ is $p$-adically annihilated, then so is $\mathcal{T}_{\gen{\G, w_p}}$. Note that many of these formulas allow us to prove that $\mathcal{T}_{\G^p}$ is $p$-adically annihilated from the fact that $\mathcal{T}_\G$ is, which explains much of the structure in the figures in Appendix~\ref{apndx:web}. In some sense, Proposition~\ref{prop:compression} suggests that $p$-adic properties of Hauptmoduln must be closely related to moonshine modules, since they tend to be preserved under power maps in the underlying group. Since we have already proved that $\mathcal{T}$ is $p$-adically annihilated in the cases where $\mathcal{T} | U_p = 0$ (see Remark~\ref{rmk:hauptmodul_killed}), we have altogether reduced the verification of Theorem~\ref{theorem:Up_annihilation} to the following much smaller check.

\begin{Corollary}\label{cly:compressed_check}In order to show that the Hauptmoduln in Theorem~{\rm \ref{theorem:Up_annihilation}} are $p$-adically annihilated, it suffices to check $p$-adic annihilation for the following smaller set shown in Table~{\rm \ref{tbl:compressed_check}}.
\end{Corollary}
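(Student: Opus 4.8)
The plan is to deduce Corollary~\ref{cly:compressed_check} from Proposition~\ref{prop:compression}, equation~(\ref{eqn:conway_compression}), and Remark~\ref{rmk:hauptmodul_killed} by showing that $p$-adic annihilation propagates along the compression relations; no further analysis of modular forms is needed for the Corollary itself, only a finite bookkeeping check against the Conway--Norton tables of admissible $n|h$-type groups.

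First I would isolate the elementary mechanism. If $f$ is $p$-adically annihilated, then so is $f|U_p^k$ for every $k\ge 0$, since $U_p$ does not decrease $p$-adic valuation and annihilation means $f|U_p^n\to 0$. More generally, if $g=\pm h + p^j f|U_p^k$ with $j\ge 0$ and both $h$ and $f$ $p$-adically annihilated, then $g$ is $p$-adically annihilated: applying $U_p^n$ gives $g|U_p^n=\pm h|U_p^n + p^j f|U_p^{k+n}$, and both terms tend to $0$ as $n\to\infty$. Rewriting the identities of Proposition~\ref{prop:compression} and~(\ref{eqn:conway_compression}) in exactly this shape (or, for part (d), directly as $\mathcal{T}_\G|U_p^n=-\mathcal{T}_{\G^p}|U_p^n$ for $n\ge 1$) then yields four propagation rules: from~(\ref{eqn:conway_compression}) or Proposition~\ref{prop:compression}(b), annihilation of $\mathcal{T}_\G$ forces annihilation of $\mathcal{T}_{\G^p}$; from Proposition~\ref{prop:compression}(a), annihilation of $\mathcal{T}_\G$ forces annihilation of $\mathcal{T}_{\gen{\G,w_p}}$; from Proposition~\ref{prop:compression}(c), annihilation of $\mathcal{T}_\G$ together with that of $\mathcal{T}_{\G^p}$ forces annihilation of $\mathcal{T}_{\gen{\G^p,w_{p^{r-1}}}}$; and from Proposition~\ref{prop:compression}(d), annihilation of $\mathcal{T}_{\G^p}$ forces, and is forced by, annihilation of $\mathcal{T}_\G$. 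For each use of a rule one must confirm the hypotheses attached to the relevant part of Proposition~\ref{prop:compression} (the value of $r$, the condition $p\nmid e$ for $w_e\in\G$, and membership of the intermediate groups in monstrous moonshine where required).

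Next I would bring in Remark~\ref{rmk:hauptmodul_killed}: the monster Hauptmoduln with $\mathcal{T}|U_p=0$ are exactly those with $p\mid h$ and those coming from Lemma~\ref{lemma:hauptmodul_poles}(c), and all of these are trivially $p$-adically annihilated. It then remains to check, separately for $p\in\{2,3,5,7,11\}$, that closing the union of Table~\ref{tbl:compressed_check} with the $\mathcal{T}|U_p=0$ groups under the four propagation rules yields every entry listed in Table~\ref{tbl:Up_annihilation}. Concretely, for each entry of Table~\ref{tbl:Up_annihilation} that is not in Table~\ref{tbl:compressed_check} one exhibits a finite chain of propagation steps back to the smaller set; these chains are precisely the edges drawn in the webs of Appendix~\ref{apndx:web} (for instance Fig.~\ref{fig:2web} for $p=2$), so the verification can be read off those diagrams.

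The main obstacle is organizational rather than conceptual: one must verify that the hypotheses of the chosen part of Proposition~\ref{prop:compression} genuinely hold along every edge used, that the intermediate groups $\gen{\G,w_p}$ and $\gen{\G^p,w_{p^{r-1}}}$ are identified correctly as $n|h$-type groups, and that the closure of Table~\ref{tbl:compressed_check} under the rules is exactly, not merely contained in, the list of Table~\ref{tbl:Up_annihilation}. Since only a few hundred groups are involved, this is a finite though somewhat tedious check.
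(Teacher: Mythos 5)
Your proposal is correct and follows essentially the same route as the paper: the Corollary is obtained there exactly by propagating $p$-adic annihilation along the relations of Proposition~\ref{prop:compression} and equation~(\ref{eqn:conway_compression}), handling the $\mathcal{T}|U_p=0$ cases via Remark~\ref{rmk:hauptmodul_killed}, and then performing the finite bookkeeping that closes Table~\ref{tbl:compressed_check} up to Table~\ref{tbl:Up_annihilation} (the structure visible in the webs of Appendix~\ref{apndx:web}). Your explicit statement of the propagation mechanism and of the hypothesis checks along each edge simply spells out what the paper leaves implicit.
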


\begin{table}[h]\centering\scriptsize
\begin{tabular}{ l|l|l|l|l }
 \multicolumn{1}{c|}{$2$} & \multicolumn{1}{c|}{$3$} & \multicolumn{1}{c|}{$5$} & \multicolumn{1}{c|}{$7$} & \multicolumn{1}{c}{$11$} \\
 \hline
 $6{+}3$ & $6{+}2$ & $4|2{+}$ & $2{+}$ & $3|3$ \\
 $6|3$ & $24|4{+}2$ & $6{+}$ & $3|3$ & $4|2{+}$ \\
 $10{+}5$ & & $6|3$ & $4|2{+}$ & $11{+}$ \\
 $22{+}11$ & & $7{+}$ & $8|4{+}$ & \\
 & & $8|4{+}$ & & \\
 & & $8|4$ & & \\
 & & $12|3{+}$ & & \\
 & & $16|2{+}$ & & \\
 & & $21|3{+}$ & & \\
 & & $24|4{+}6$ & & \\
 & & $30|3{+}10$ & &
\end{tabular}
\caption{Hauptmoduln for which it suffices to prove Theorem~\ref{theorem:Up_annihilation}.}\label{tbl:compressed_check}
\end{table}

One has a significant amount of freedom in choosing these representatives -- we have chosen those most conducive to performing computations when $p = 2,3$. When $p\ge 5$ we will prove Theorem~\ref{theorem:Up_annihilation} directly in Section~\ref{ssec:A-ordinary} rather than using the reduction given here.

\subsection{Annihilation via ordinary spaces}\label{ssec:A-ordinary}
In this section, we will prove Theorem~\ref{theorem:Up_annihilation} for $p\ge 5$ using the theory of $p$-adic modular forms developed in Section~\ref{sec:serre}. The key observation relating the theory of $p$-adic modular forms to $p$-adic annihilation is the following easy consequence of Proposition~\ref{prop:ordinary_decomp}.
\begin{Lemma}\label{lemma:small_dim_annihilation}
 Suppose that $\mf S(\G,[\eta]_p)^0\subseteq \F_p$. If $f$ is a weight $0$ $p$-adic modular form on $(\G,\eta)$, we have $f|U_p^n\rightarrow c$ in the $p$-adic limit, where $c$ is the constant term of $f$.
\end{Lemma}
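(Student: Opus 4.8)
The plan is to combine the ordinary decomposition of Proposition~\ref{prop:ordinary_decomp} with the convergence of the sequence $f_m$ produced in the proof of Lemma~\ref{lemma:hauptmoduln_are_padic_mfs}. Since $f$ is a weight $0$ $p$-adic modular form on $(\G,\eta)$, we may write $f = \lim_m f_m$ for some $f_m\in M_{k_m}^\Q(\G,\eta)$ with $v_p(f_m - f)\to\infty$; passing to a subsequence we may assume $k_m\equiv 0$ in $\hat{\mf X}$ (or at least in $\Z/(2p-2)\Z$), so that the reductions $\tilde f_m$ lie in $\tilde M(\G,[\eta]_p)^0$ for $m$ large. The first step is to reduce the claim to a statement mod $p$: writing $c$ for the constant term of $f$ (which is $p$-integral since all the $f_m$ are, for $m$ large), we want $v_p(f|U_p^n - c)\to\infty$, and by the usual telescoping it suffices to show that for each fixed $r$ there is an $N$ with $f|U_p^n\equiv c\Mod{p^r}$ for all $n\ge N$. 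By induction on $r$ this reduces to the mod $p$ statement applied to $p$-adic modular forms of the form $p^{-j}(f|U_p^{N_j} - c_j)$ for appropriate intermediate constants — so the heart of the matter is: \emph{if $\tilde f\in\tilde M(\G,[\eta]_p)^0$ then $\tilde f|U_p^n$ converges (stabilizes) to its constant term.}

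For the mod $p$ statement, apply Proposition~\ref{prop:ordinary_decomp} to decompose $\tilde f = \tilde f^{\mathrm{ord}} + \tilde f^{\mathrm{nil}}$ with $\tilde f^{\mathrm{ord}}\in\mf S(\G,[\eta]_p)^0$ and $\tilde f^{\mathrm{nil}}\in\mf N(\G,[\eta]_p)^0$. By local nilpotence, $\tilde f^{\mathrm{nil}}|U_p^n = 0$ for $n$ large, so for large $n$ we have $\tilde f|U_p^n = \tilde f^{\mathrm{ord}}|U_p^n\in\mf S(\G,[\eta]_p)^0$. By hypothesis $\mf S(\G,[\eta]_p)^0\subseteq\F_p$, i.e.\ it consists of constants. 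But $U_p$ fixes the constant term of any $q$-expansion (indeed $U_p$ acts as the identity on constants, since $(\sum a(n)q^n)|U_p = \sum a(pn)q^n$), so once $\tilde f|U_p^n$ is a constant it equals its own constant term, which equals the constant term of $\tilde f$, which is $c\bmod p$. Hence $\tilde f|U_p^n\equiv c\Mod p$ for all $n$ sufficiently large. Feeding this back into the inductive lifting from mod $p$ to mod $p^r$ gives $v_p(f|U_p^n - c)\to\infty$, which is exactly $f|U_p^n\to c$ in the $p$-adic limit.

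The main obstacle I expect is the bookkeeping in the lift from mod $p$ to mod $p^n$: one must ensure that each successive $p$-adic modular form $p^{-j}(f|U_p^{N_j} - c_j)$ to which the mod $p$ argument is reapplied is again a weight $0$ $p$-adic modular form on $(\G,\eta)$ lying (after adjusting the weight in $\hat{\mf X}$) in $\tilde M(\G,[\eta]_p)^0$, and that the intermediate constants $c_j$ accumulate to $c$. This is routine — $U_p$ preserves $p$-adic modular forms on $(\G,\eta^{\sigma_p}) $ and powers commute appropriately, $p$-integrality is preserved, and the weight stays $0$ in $\Z_p$ while possibly changing mod $2p-2$, which is harmless since $\tilde M(\G,[\eta]_p)^\alpha$ and the ordinary space only see $\alpha\bmod 2p-2$ — but it is the only place where care is needed. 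Everything else follows formally from Proposition~\ref{prop:ordinary_decomp} together with the trivial observation that $U_p$ acts as the identity on constant $q$-series.
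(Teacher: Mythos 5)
Your proposal is correct and is essentially the argument the paper intends: the paper gives no written proof, presenting the lemma as an ``easy consequence'' of Proposition~\ref{prop:ordinary_decomp}, and your route --- decompose mod $p$ into ordinary plus locally nilpotent parts, kill the nilpotent part by iterating $U_p$, use $\mf S(\G,[\eta]_p)^0\subseteq\F_p$ together with the fact that $U_p$ fixes constants and preserves constant terms, then bootstrap from mod $p$ to mod $p^r$ via the auxiliary forms $p^{-j}\big(f|U_p^{N_j}-c\big)$ --- is exactly that intended elaboration. The one spot worth a word in the bookkeeping you flag: when $c\neq 0$ and $\eta$ is nontrivial, subtracting the constant mixes eigenvalue maps, so the mod-$p$ step should be run in the $U_p$-stable space $\tilde M(\G,[\eta]_p)^0+\F_p$ (on which iterated $U_p$ still lands in constants), or one notes that in the paper's application $f=\mt|U_p$ has $c=0$, making this issue vacuous.
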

When $f = \mt|U_p$ for $\mt$ a Hauptmodul, then $c = 0$. By Lemma~\ref{lemma:hauptmoduln_are_padic_mfs}, $f$ is a weight $0$ $p$-adic modular form on $(\G,\eta)$ for some $\G$, $\eta$. Hence Lemma~\ref{lemma:small_dim_annihilation} applies.

Using the \texttt{mfslashexpansion} and \texttt{mfatkin} functions in Pari \cite{Pari/GP}, one can compute the actions of
\begin{gather*}x = \Matrix{1 & h \\ 0 & 1}, \qquad y = \Matrix{1 & 0 \\ n & 1},\end{gather*}
and all Atkin--Lehner involutions on $M_{p-1}(\G,[\lambda_t]_p)$. Then using elementary linear algebra, it is easy to find a basis for $\mf S(\G,[\lambda]_p)^0\subseteq \tilde M_{p-1}(\G,[\lambda_t]_p)$. We performed this computation for various $n|h$ groups $\G$ appearing in monstrous moonshine with small values of $n$ with $p\nmid n$ (specifically, $n\le 24$ for $p = 5$, $n\le 11$ for $p = 7$, and $n\le 7$ for $p = 11$).

The $\G$ for which $\mf S(\G,[\lambda]_p)^0\subseteq \F_p$ are given in Table~\ref{tbl:annihilated_Gammas}. Applying Lemma~\ref{lemma:hauptmoduln_are_padic_mfs} to the Hauptmoduln of Table~\ref{tbl:Up_annihilation} then proves Theorem~\ref{theorem:Up_annihilation} for $p\ge 5$. We note that every group from Table~\ref{tbl:annihilated_Gammas} corresponds to at least one group from Theorem~\ref{theorem:Up_annihilation}.

\begin{table}[h]\centering\scriptsize
 \begin{tabular}{c|c c c c c c c c c c c c c }
 $p = 5$ & $1$ & $2{+}$ & $3{+}$ & $3|3$ & $4|2{+}$ & $6{+}$ & $6|3$ & $7{+}$ & $8|4{+}$ & $8|4$ & $12|3{+}$ & $16|2{+}$ & $24|4{+}6$ \\ \hline
 $p = 7$ & $1$ & $2{+}$ & $3|3$ & $4|2{+}$ & $8|4{+}$ \\ \hline
 $p = 11$ & $1$ & $3|3$ & $4|2{+}$
 \end{tabular}
\caption{$\G$ such that $\mf S(\G,[\lambda]_p)^0\subset\F_p$.}\label{tbl:annihilated_Gammas}
\end{table}

\begin{Remark} Another method for computing $\mf S(\G,[\lambda]_p)$ is as follows. By drawing fundamental domains for $\Gamma_{\lambda_t}$, one can compute the dimension of the space $M_{p-1}(\Gamma_{\lambda_t})$. One then uses the $q$-expansions of the modular forms $\mt^r|U_p^m$ to compute a basis for $\tilde M_{p-1}(\G,[\lambda_t]_p)$. Computing $\mf S(\G,[\lambda_t]_p)^0$ then amounts to linear algebra. We carried out this procedure in Sage~\cite{Sage}, giving an alternate verification of the results in Table~\ref{tbl:annihilated_Gammas}. The Sturm's bound calculations for this method require checking $3500$ coefficients.
\end{Remark}

It is worth noting that these methods also apply to Hauptmoduln not appearing in monstrous moonshine. For example, if $\G = \G_0(2|2)$ and $\mt$ is the Hauptmodul on $\G_\lambda$ then Lemma~\ref{lemma:hauptmoduln_are_padic_mfs} gives that $\mt|U_5$ is a $5$-adic modular form on $(\G,\lambda)$. Using the method above, one can compute that $\mf S(\G,[\lambda]_p)^0 = 0$, so that $\mt$ is $5$-adically annihilated.

Although the methods developed here do not directly give rates of annihilation, the following observation held in all cases from Table~\ref{tbl:annihilated_Gammas}, when checked with $10000$ coefficients.

\begin{Remark}\label{rmk:rates}Let $\mathcal{T}$ be the Hauptmodul on some $\Gamma_\lambda$ appearing in Table~\ref{tbl:Up_annihilation} for $p\geq 5$, $\Gamma'$ from Table \ref{tbl:annihilated_Gammas} as given by Lemma~\ref{lemma:hauptmoduln_are_padic_mfs}, and $\lambda'$ the corresponding eigenvalue map specified by Conway--Norton. Let $m$ be the smallest integer such that $U_p^m\tilde M_{p-1}(\G',[\lambda']_p) = \mf S(\G',[\lambda']_p)^0$. Numerically, we observe $v_p\big(\mt|U_p^{\ell+m+1}\big)\geq v_p\big(\mt|U_p^{\ell}\big)+1$, bounding the rate of annihilation from below by $1/(m+1)$. Moreover this choice of $m$ appears to be tight. We pose the question of whether these observations continue to hold to in general.
\end{Remark}

\subsection[Additional $p$-adic annihilation]{Additional $\boldsymbol{p}$-adic annihilation}\label{ssec:A-lehner}
The ordinary spaces of Section~\ref{ssec:S-hecke} and the annihilation verified in Section \ref{ssec:A-ordinary} were restricted to $p\geq 5$. In this section, we use different techniques to explicitly verify $p$-adic annihilation for the six groups appearing in Corollary~\ref{cly:compressed_check} for the primes $p=2,3$.

\begin{Proposition}\label{prop:cooking}We have
\begin{align*}
 v_p\big(\mt|U_p^n\big)\geq\lfloor n\alpha \rfloor,
\end{align*}
when $\mt$ is the normalized Hauptmodul on $\G_\lambda$, with parameters given in Table~{\rm \ref{tbl:cooking}}.
\begin{table}[ht] \centering
 \begin{tabular}{c|c|c|c|c|c|c}
 $\G$ & $6{+}2$ & $6{+}3$ & $10{+}5$ & $22{+}11$ & $6|3$ & $24|4{+}2$ \\ \hline
 $p$ & $3$ & $2$ & $2$ & $2$ & $2$ & $3$ \\
 $\alpha$ & $3/2$ & $1$ & $3/2$ & $1/2$ & $3/2$ & $1/2$
 \end{tabular}
 \caption{Rates of annihilation.} \label{tbl:cooking}
\end{table}
\end{Proposition}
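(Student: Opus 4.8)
The plan is to prove each of the six cases by an explicit recursion in the spirit of Lehner, since the weight-filtration machinery of Section~\ref{sec:serre} requires $p\ge 5$ and so does not apply when $p\in\{2,3\}$. Fix one of the pairs $(\G,p)$ of Table~\ref{tbl:cooking}. In every such case $p$ exactly divides $n$, so by Lemma~\ref{lemma:up_nh_facts} the function $\mt|U_p$ is again modular on a $pn'|h$-type group of the same shape (with $p\nmid n'h$), and by Lemma~\ref{lemma:hauptmodul_poles}(a) its poles are confined to $\infty$ and the cusps equivalent to $0$ under $\G_0(p)$. The first step is to record, for each case, an identity coming from Proposition~\ref{prop:compression} (parts (a)--(d)) and~(\ref{eqn:conway_compression}) of the shape
\begin{gather*}
 p^{j}\,\mt\,|\,U_p^{k}\;=\;\sum_i c_i\,\mt_{\G_i},
\end{gather*}
with $k\in\{1,2\}$ and with $j$ and the $c_i\in\Z_{(p)}$ explicit, the $\G_i$ ranging over the ``power-reduced'' groups $\G$, $\G^p$, $\gen{\G^p,w_{p^{r-1}}},\dots$ (only finitely many groups arise). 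Re-expressing each $\mt_{\G_i}$ the same way, one is reduced to controlling the $U_p$-iterates of the finitely many Hauptmoduln that occur; together these form a finite system on which a suitable power of $U_p$ acts as $p^{-j}$ times a $\Z_{(p)}$-integral endomorphism.

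Second, I would settle the base cases, i.e.\ that the right-hand side of each identity has $v_p$ at least $j+\lfloor k\alpha\rfloor$, and more generally the initial valuations needed to prime the induction for every Hauptmodul in the system. Since every McKay--Thompson series appearing has an integral $q$-expansion and its principal part cancels in these combinations (which is exactly why the compression formulas output a series that is $O(q)$), this is a finite computation: after multiplying by a sufficiently high power of the modular form $g$ of Lemma~\ref{lemma:delta_quotient} to clear the poles at the cusps over $0$ -- harmless modulo $p$ since $g\equiv 1\Mod{p}$ -- one bounds the resulting pole orders with Lemma~\ref{lemma:pole_bound} and invokes Sturm's bound, Lemma~\ref{lemma:sturm}. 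As in the analogous verifications in this paper, a few thousand coefficients suffice.

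Third, bootstrap. Because $v_p(h|U_p)\ge v_p(h)$ for every $q$-series $h$, applying $U_p^{k}$ to the identity above and feeding in the inductive hypotheses $v_p(\mt|U_p^{m})\ge\lfloor m\alpha\rfloor$ (together with the corresponding bounds for the other Hauptmoduln in the system) propagates the estimate from $n$ to $n+k$. Running this joint induction over the residue classes of $n$ modulo $k$ and over the functions in the system, and handling the finitely many small-$n$ cases directly, yields $v_p(\mt|U_p^{n})\ge\lfloor n\alpha\rfloor$ for all $n$, as claimed.

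The hard part is extracting the \emph{exact} rate $\alpha$. The crude bound $v_p(\cdot|U_p)\ge v_p(\cdot)$ only reproves the qualitative $p$-adic annihilation already given by Theorem~\ref{theorem:Up_annihilation}; pinning down $\alpha\in\{\tfrac12,1,\tfrac32\}$ requires the precise power of $p$ dividing each scalar $p^{j}$ in the recursion and the precise $p$-divisibility of the Thompson-series combinations $\sum_i c_i\,\mt_{\G_i}$, and, for the half-integral rates, arranging the recursion with period $k=2$ and carefully tracking how the valuations of consecutive iterates interleave. That interleaving is the source of the floor function $\lfloor n\alpha\rfloor$ and is where the bookkeeping demands the most care.
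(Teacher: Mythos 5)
There is a genuine gap: your bootstrapping step does not propagate the estimate. The compression identities of Proposition~\ref{prop:compression} and~\eqref{eqn:conway_compression} all carry a factor $p^{j}$ on the left, so when you rewrite them as a recursion for $\mt|U_p^{n}$ you \emph{divide} by $p^{j}$ at every stage; e.g.\ $4\,\mt_{6|3}|U_2^{\,2}=\mt_{6|3}-\mt_{3|3}$ gives $\mt_{6|3}|U_2^{\,n}=\tfrac14\big(\mt_{6|3}|U_2^{\,n-2}-\mt_{3|3}|U_2^{\,n-2}\big)$, and feeding in the inductive hypothesis together with the trivial bound $v_p(h|U_p)\ge v_p(h)$ yields $v_2\ge \lfloor (n-2)\alpha\rfloor-2$, not $\lfloor n\alpha\rfloor$. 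To gain anything you would need unexplained $p$-adic cancellation between distinct McKay--Thompson series, or an independent input of growing divisibility for some member of your ``finite system''. But no such input is available here: the six groups of Table~\ref{tbl:cooking} are exactly the $p=2,3$ seeds of Corollary~\ref{cly:compressed_check}, i.e.\ the cases that the compression formulas cannot reduce any further, and for $p=2,3$ the ordinary-space machinery of Section~\ref{sec:serre} is unavailable, so there is no ``qualitative annihilation already given by Theorem~\ref{thm:annihilation}'' to fall back on --- this proposition \emph{is} the proof of those cases. A Sturm-bound check of base cases cannot close the gap either: a finite coefficient computation certifies finitely many valuations, and trivial monotonicity of $v_p$ under $U_p$ never increases them, so unbounded divisibility cannot follow.

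The missing idea is the mechanism by which each application of $U_p$ gains a factor $p^{\alpha}$. The paper gets this, following Lehner, by working with $Z=\mathfrak{T}^{-1}$ where $\mathfrak{T}$ is the eta-quotient of Table~\ref{tbl:eta_quotients}: cusp expansions and the eta functional equation give the closed identity $p(\mt|U_p)=-p^{e}\mathfrak{T}^{-1}+c$ of~\eqref{eq:hauptmodul_functional}, and then the exact polynomial relation $Z|U_p=p^{\alpha}\sum_j b_jZ^{j}$ of~\eqref{eq:hauptmodul_polynomial} with the explicit $p$-integral data of Table~\ref{tbl:bj_coeffs}. A Newton-sum induction on the power sums of the roots $W(\tau+\lambda)$ then shows that $U_p$ maps $Z\cdot\Z[p^{\alpha}Z,p^{\alpha}]$ into $p^{\alpha}Z\cdot\Z[p^{\alpha}Z,p^{\alpha}]$; since $\mt|U_p=-p^{e-1}Z$ lies in this module, iterating produces the extra factor $p^{\alpha}$ per step, and the half-integral $\alpha$ is what yields the floor $\lfloor n\alpha\rfloor$. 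Your proposal contains no analogue of this self-reproducing structure (a module stable under $U_p$ up to a uniform gain of $p^{\alpha}$), which is the actual content of the proposition; the compression formulas you invoke serve only to transfer annihilation between groups, not to create it.
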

\begin{proof}Throughout, we use $\mathfrak{T}$ to denote the normalized Hauptmodul $\mt$ plus some constant, and $\eta(\tau) = q^{1/24}\prod\limits_{n\ge 1}(1 - q^n)$ is the \textit{Dedekind eta function}. The constant will be specified by writing $\mathfrak{T}$ as an eta-quotient given in \cite{Conway79} and recorded in Table \ref{tbl:eta_quotients}.
\begin{table}[ht] \centering
 \begin{tabular}{c|c|c|c|c|c|c}
 $\G$ & $6{+}2$ & $6{+}3$ & $10{+}5$ & $22{+}11$ & $6|3$ & $24|4{+}2$ \\ \hline
 \tsep{4pt}$\mathfrak{T}$ & $\frac{\eta(\tau)^4\eta(2\tau)^4}{\eta(3\tau)^4\eta(6\tau)^4}$ & $\frac{\eta(\tau)^6\eta(3\tau)^6}{\eta(2\tau)^6\eta(6\tau)^6}$ & $\frac{\eta(\tau)^4\eta(5\tau)^4}{\eta(2\tau)^4\eta(10\tau)^4}$ & $\frac{\eta(\tau)^2\eta(11\tau)^2}{\eta(2\tau)^2\eta(22\tau)^2}$ & $\frac{\eta(3\tau)^8}{\eta(6\tau)^8}$ & $\frac{\eta(4\tau)\eta(8\tau)}{\eta(12\tau)\eta(24\tau)}$
 \end{tabular}
 \caption{Eta quotients for unnormalized Hauptmoduln.} \label{tbl:eta_quotients}
\end{table}
We follow the methods of \cite{Lehner49a,Lehner49b} for the $j$-function, and prove our six cases in parallel.

We first compute expansions of $\mathfrak{T}$ at each cusp, and write $\mathfrak{T}|U_p$ as a~rational function in a~Hauptmodul by subtracting off the principal part. Below, we write out the calculation for $p=3$ and the group $6{+}2$, and record only the formulas for the rest. These formulas may also be verified via a Sturm's bound coefficient check ($600$ suffice in all cases). Let
\begin{gather*}
S_\lambda=\begin{pmatrix}1 & \lambda \\ 0 & p \end{pmatrix} \qquad \text{and} \qquad F_2=\begin{pmatrix} 0 & -1 \\ 2 & 0 \end{pmatrix}.
\end{gather*}
For $W_2$ a suitable Atkin--Lehner matrix on $\G_0(6)$, we have
\begin{align*}
 3(\mathfrak{T}|U_3)\left(-1/2\t\right)&=\sum_{\lambda=0}^2\mathfrak{T}(S_\lambda F_2)
 =\mathfrak{T}\left(-1/6\t\right)+\sum_{\lambda=1}^2\mathfrak{T}(S_\lambda F_2\tau)\\
 &=\mathfrak{T}\left(-1/6\t\right)+\sum_{\mu=1}^2\mathfrak{T}(W_2 T_\mu\tau) =\mathfrak{T}\left(-1/6\t\right)+3(\mathfrak{T}|U_3)(\tau)-\mathfrak{T}(\tau/3),
\end{align*}
and upon substituting $\tau\mapsto 3\tau$ we obtain the cusp expansion
\begin{align*}
 3(\mathfrak{T}|U_3)\left(-1/6\t\right)=-q^{-1}+O(1).
\end{align*}
We next subtract off this principal part with $-\mathfrak{T}(\tau)$. Sending $\tau\mapsto-1/6\t$ and applying the $\eta$ functional equation to the quotients in Table~\ref{tbl:eta_quotients}, we find
\begin{gather*}
3(\mathcal{T}|U_3)(\tau)=-3^4\mathfrak{T}(\tau)^{-1}+c
\end{gather*}
for some constant $c$. One can check that $\mathfrak{T}|U_p$ is bounded at the other cusps; see Lemma~\ref{lemma:hauptmodul_poles}. For appropriate constants $c$, similar computations show
\begin{gather}\label{eq:hauptmodul_functional}
 p(\mathcal{T}|U_p)(\tau)=-p^e\mathfrak{T}(\tau)^{-1}+c,
\end{gather}
where $e$ is as in Table~\ref{tbl:e_powers}.
\begin{table}[ht] \centering
 \begin{tabular}{c|c|c|c|c|c|c}
 $\G$ & $6{+}2$ & $6{+}3$ & $10{+}5$ & $22{+}11$ & $6|3$ & $24|4{+}2$ \\ \hline
 $e$ & $4$ & $6$ & $4$ & $2$ & $4$ & $1$
 \end{tabular}
 \caption{Parameters for functional equations.} \label{tbl:e_powers}
\end{table}

These relations imply $v_p(\mathcal{T}|U_p)\geq e-1$. Set $Z=\mathfrak{T}^{-1}$ and $W=p^e Z(\tau/p)$. Expanding $Z$ at other cusps and applying the principal part analysis as above, we find
\begin{align*}
 p(p^e Z|U_p)(\tau)=f(p^e Z).
\end{align*}
where $f$ is a polynomial with $f(\mathfrak{T}(\tau))=\mathfrak{T}(p\tau)+O(q)$ (different for each group). Write
\begin{align}
 \label{eq:hauptmodul_polynomial}
 (Z|U_p)(\t)=p^{\alpha}\sum_{j=1}^p b_j Z^j
\end{align}
where the coefficients $b_j$ are listed in Table~\ref{tbl:bj_coeffs}.
\begin{table}[ht]\centering
\begin{tabular}{c|c|c|c|c|c|c}
 & $6{+}2$ & $6{+}3$ & $10{+}5$ & $22{+}11$ & $6|3$ & $24|4{+}2$ \\ \hline
 $b_1$ & $2\cdot3^{1/2}$ & $3$ & $2^{1/2}$ & $2^{1/2}$ & 0 & 0\\
 $b_2$ & $4\cdot 3^{5/2}$ & $2^4$ & $2^{3/2}$ & $2^{1/2}$ & $2^{3/2}$ & 0\\
 $b_3$ & $3^{11/2}$ & & & & & $3^{1/2}$
\end{tabular}
\caption{Polyomial coefficients.}\label{tbl:bj_coeffs}
\end{table}

Following Lehner \cite[equation~(2.2)]{Lehner49b} gives
\begin{gather*}
 W^p+\sum_{j=1}^p (-1)^j p_j W^{p-j}=0 , \qquad \text{where} \quad (-1)^{j+1}p_j =p^{e+\alpha+1}\sum_{m=j}^p b_m Z^{m-j+1}.
\end{gather*}
This equation has roots $W(\tau+\lambda)$ for $\lambda\in\{0,\ldots,p-1\}$. If $S_\ell$ denotes the sum of the $\ell$-th power of these roots, then we have $p^{-e\ell-1}S_\ell=Z^\ell|U_p$. We show that $p^{\alpha(\ell-1)}(Z^\ell|U_p)\in p^\alpha Z\cdot \Z[p^\alpha Z,p^\alpha]$ for all $\ell\geq 1$. Equivalently, we check that $S_\ell\in p^{(e-\alpha)\ell+1+\alpha}p^\alpha Z\cdot \Z[p^\alpha Z,p^\alpha]$.

Lehner uses Newton sums to set up an induction, relating $S_\ell$ to $S_j$ for $j<\ell$. A similar computation works for our cases. The coefficients in Table~\ref{tbl:bj_coeffs} imply the base case $\ell=1$. By Newton sums we have
\begin{gather*}
 S_\ell=\sum_{j=1}^\ell (-1)^{j+1} p_j S_{\ell-j},
\end{gather*}
where $b_j,p_h=0$ for $j,h\geq p+1$ and $S_0=\ell$. For $\ell\leq p$ we rewrite the Newton sum as
\begin{gather*}
 S_\ell =\sum_{j=1}^{\ell-1} (-1)^{j+1}p_j S_{\ell-j}-(-1)^\ell \ell p_\ell.
\end{gather*}
 By construction we have that $p_j\in p^{e+1+\alpha+v_p(b_j)-\alpha}p^{\alpha}Z\cdot \Z[p^{\alpha}Z,p^{\alpha}]$ and applying the inductive hypothesis gives $p_j S_{\ell-j}\in p^{e+1+\alpha+v_p(b_j)-\alpha+(e-\alpha)(\ell-j)+1+\alpha} p^{\alpha}Z\cdot \Z[p^{\alpha}Z]$. We have the inequality $e+1+\alpha+v_p(b_j)-\alpha\geq (e-\alpha) j$, which one checks explicitly using the values of $b_j$. In particular, $p_j S_{\ell-j}\in p^{(e-\alpha)\ell+1+\alpha} p^\alpha Z\cdot \Z[p^\alpha Z,p^\alpha]$. We also check that $\ell p_\ell\in p^{(e-\alpha)\ell+1+\alpha} p^{\alpha}Z\cdot\Z[p^{\alpha}Z,p^{\alpha}]$ from the explicit values of $b_j$. Thus $S_\ell\in p^{(e-\alpha)\ell+1+\alpha}p^\alpha Z\cdot \Z[p^\alpha Z,p^\alpha]$ for $\ell\leq p$.
 When $l\geq p+1$ we have
 \begin{gather*}
 S_\ell =\sum_{j=1}^{\ell-1} (-1)^{j+1}p_j S_{\ell-j}
 \end{gather*}
 and we check that $p_j S_{\ell-j}\in p^{(e-\alpha)\ell+1+\alpha} p^\alpha Z\cdot\Z[p^\alpha Z,p^\alpha]$ as above, which completes the induction.
 Thus, we have shown that
 \begin{gather*}
 \big(p^{\alpha(\ell-1)}Z^\ell|U_p\big)\in p^\alpha Z\cdot \Z[p^\alpha Z,p^\alpha]
 \end{gather*}
 so that if $g\in Z\cdot \Z[p^\alpha Z,p^\alpha]$, then $g|U_p\in p^\alpha Z\cdot \Z[p^\alpha Z,p^\alpha]$. Note that $\mt|U_p=-p^{e-1}Z\in Z\cdot \Z[p^\alpha Z,p^\alpha]$. Repeatedly applying the result of the induction thus gives the rates of annihilation claimed in Proposition~\ref{prop:cooking}.
 \end{proof}

\begin{Remark}These explicit techniques also apply when $p\geq 5$, as long as $\mt|U_p$ is modular on a genus zero group. For example, if $\mt$ is the Hauptmodul for $2{+}$, then $\mt|U_5$ is modular on $10{+}2$, which is genus zero. There are many examples for $p\geq 5$ where this does not hold, e.g., if $\mt$ the Hauptmodul for $2{+}$ then $\mt|U_7$ is modular on $14{+}2$, which is genus one. For this example, the arguments of this section can be modified to obtain explicit lower bounds on the annihilation rate, e.g., by working with bivariate polynomials in appropriate meromorphic modular forms instead of single-variate polynomials in the Hauptmodul, since the latter do not exist for the genus one curve corresponding to $14{+}2$. In general, however, we rely on the theory of Section~\ref{sec:serre} to prove $p$-adic annihilation as in Section~\ref{ssec:A-ordinary}.
\end{Remark}

\subsection{Preservation of annihilation under power maps}\label{ssec:A-power}
We discuss the relationship between Hauptmoduln $p$-adic annihilation and power maps in the monster. We have already seen hints of this in Section \ref{ssec:A-compression}. Unlike previous sections, this section will not be used elsewhere, except as motivation for Section \ref{sec:moonshine}.

\begin{Definition}For a fixed Hauptmodul $\mt_g$ and an integer $d$, we say that $p$-adic annihilation is \emph{preserved under the $d$-th power map} if $\mt_g$ $p$-adically annihilated implies $\mt_{g^d}$ is $p$-adically annihilated.
\end{Definition}

These relationships are depicted in Appendix \ref{apndx:web} for primes $p\leq 11$, which we expect are the only primes with annihilation. Note that if $d$ is relatively prime to the order of $g$, then $\mt_{g^d} = \mt_g$, so $p$-adic annihilation is preserved under the $d$-th power map. We will therefore restrict our attention to those $d$ that divide the order of $g$. We first give conceptual explanations for this preservation of $p$-adic annihilation, when it holds. We then characterize exactly when $p$-adic annihilation is not preserved, and offer a notion of $p$-adic annihilation that seems to always be preserved under power maps. The average numerical rates of annihilation from Appendix~\ref{apndx:table} often do not decrease under power maps~-- we remark upon this briefly.

In certain situations, the compression formulas show that Hauptmodul $p$-adic annihilation is preserved under power maps, e.g. powering from the group $\ell+$ to level $1$, when $\ell$ is prime. When $p\geq 5$, we may also explain via ordinary spaces.

\begin{Example}When $p=5$, the groups $50+$, $10+2$, $10+$, and $2+$ all have the same ordinary space $\mathbb{F}_p=\mf{S}(\G_0(2)+,\bm{1})^0\subseteq \tilde{M}_{p-1}(\G_0(2)+,\bm{1}_t)$, and their Hauptmoduln share the same annihilation behavior. This explains preservation of $p$-adic annihilation under $p$-th power maps. The compression formulas of Proposition \ref{prop:compression} capture these same relations.
\end{Example}

Furthermore, in light of Lemma~\ref{lemma:small_dim_annihilation} and Section~\ref{ssec:A-ordinary}, we seek inclusions of the form $\mf S(\G^d,[\lambda]_p)^0$ $\hookrightarrow\mf S(\G,[\lambda]_p)^0$ for $\G$ being $n|h$-type with $p\nmid nh$. This would explain $d$-th power map preservation of $p$-adic annihilation from $\G_\lambda$ to $\G^d_\lambda$, and similarly for groups with the same ordinary spaces. The following proposition accomplishes this in certain situations.
\begin{Proposition} Let $p\geq5$ be prime, $(d,nh)=1$, $\G$ be an $n|h$-type group with eigenvalue map $\eta$, and $\G'$ be a $dn|h$-type group with eigenvalue map $\eta'$ such that
 \begin{gather*}\AL(\G) = \AL(\G'),\qquad (d,e) = 1 \qquad \text{for all} \quad \al{e}{\G},\qquad\text{and}\qquad \eta|_{\G'} = \eta'.\end{gather*}
 \begin{enumerate}\itemsep=0pt
 \item[$(a)$] There is an inclusion $M_k(\G,\eta)\subseteq M_k(\G',\eta')$.
 \item[$(b)$] For any $\alpha\in \Z/(2p - 2)\Z$, there is an inclusion $\mf S(\G,[\eta]_p)^\alpha\subseteq \mf S(\G',[\eta']_p)^\alpha$.
 \end{enumerate}
\end{Proposition}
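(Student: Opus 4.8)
The plan is to deduce part~(b) from part~(a) by bookkeeping, so that the content is concentrated in part~(a), which I would handle as a restriction argument once one observes that $\G'$ is a subgroup of $\G$. Since $\eta|_{\G'}$ must make sense, the hypothesis $\eta|_{\G'}=\eta'$ already presupposes $\G'\le\G$; concretely this holds because $\G_0(dn|h)\le\G_0(n|h)$ (every matrix $\left(\begin{smallmatrix}a&b/h\\c\,dn&d\end{smallmatrix}\right)$ equals $\left(\begin{smallmatrix}a&b/h\\(cd)n&d\end{smallmatrix}\right)$) and the Atkin--Lehner cosets $w_e$ for $\al{e}{\G'}=\al{e}{\G}$ lie in $\G$ by assumption. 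The one point requiring care is that, as for the inclusion preceding Lemma~\ref{lemma:trace_facts} and by \cite[Lemma~2.3, Theorem~2.7]{Ferenbaugh93}, a level-$dnh$ representative of $W_e$ can be chosen in the $\G_0(nh)$-coset of a level-$nh$ representative (possible since $(d,nh)=1$ and $\det W_e=e$ force the lower-left entry of a level-$nh$ representative to be coprime to $d$), so that $f|_kW_e$ is the same whether $f$ is regarded at level $nh$ or $dnh$. Granting this, (a) is immediate: $f\in M_k(\G,\eta)$ is a holomorphic modular form on $\G_0(nh)$, hence on the finite-index subgroup $\G_0(dnh)$ (holomorphy at a cusp $s$ of $X_0(dnh)$ follows from holomorphy at its image on $X_0(nh)$, since the width of $s$ only grows when passing to the subgroup), and $f|_k\g=\eta(\g)f=\eta'(\g)f$ for all $\g\in\G'\subseteq\G$, so $f\in M_k(\G',\eta')$.

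For part~(b), I would first note that restriction to $\G'$ commutes with the twist of Definition~\ref{def:twisted_eigmap} and with $\sigma_p$, since $\eta'$ agrees with $\eta$ on $\G_0(dn|h)$ and on each $W_e$; hence $\eta_t|_{\G'}=\eta'_t$ and $\eta^{\sigma_p}|_{\G'}=\eta'^{\sigma_p}$, and applying part~(a) to $\eta$, $\eta_t$, $\eta^{\sigma_p}$ and $\eta_t^{\sigma_p}$ yields an inclusion $M_k(\G,\nu)\subseteq M_k(\G',\nu|_{\G'})$ in each case. Intersecting with $\psring{\Z_{(p)}}{q}$ and reducing mod $p$ (so that, with the paper's convention, $\tilde M_k$ is an honest subspace of $\psring{\F_p}{q}$), one obtains $\tilde M_k(\G,[\eta]_p)\subseteq\tilde M_k(\G',[\eta']_p)$ and $\tilde M_k(\G,[\eta_t]_p)\subseteq\tilde M_k(\G',[\eta'_t]_p)$ for every $k$, and these inclusions are $U_p$-equivariant because $U_p$ acts on $q$-expansions by a formula independent of the ambient group.

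Finally, I would fix $4\le k\le p+1$ with $k\equiv\alpha\pmod{p-1}$; this $k$ and the alternative ``$k\equiv\alpha\pmod{2p-2}$ or not'' depend only on $\alpha$ and $p$, not on the group, so by Proposition~\ref{prop:ordinary_decomp} the spaces $\mf S(\G,[\eta]_p)^\alpha$ and $\mf S(\G',[\eta']_p)^\alpha$ are cut out of $\tilde M_k(\G,[\eta]_p)$ and $\tilde M_k(\G',[\eta']_p)$ (or their $t$-twisted analogues, in the same case) by the \emph{same} operation $\bigcap_{n\ge1}\im U_p^n$. The $U_p$-equivariant inclusion of the previous paragraph gives $\im\big(U_p^n|\tilde M_k(\G,[\eta]_p)\big)\subseteq\im\big(U_p^n|\tilde M_k(\G',[\eta']_p)\big)$ for every $n$, and intersecting over $n$ yields $\mf S(\G,[\eta]_p)^\alpha\subseteq\mf S(\G',[\eta']_p)^\alpha$. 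The only genuinely non-formal ingredient is the matrix-level comparison of $W_e$ at the two levels behind $\G'\le\G$; once that is granted (it is standard from \cite{Ferenbaugh93}), both parts are routine unwinding of definitions, and I expect no further obstacle.
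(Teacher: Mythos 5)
Your proposal is correct and follows essentially the same route as the paper: part (a) rests on the containment $\G'\le\G$ with Atkin--Lehner representatives matched across levels via \cite{Ferenbaugh93} (the paper phrases this as the isomorphic inclusion of quotients from Section~\ref{ssec:T-traces}), and part (b) is exactly the paper's deduction from (a) using the explicit description of $\mf S$ in the proof of Proposition~\ref{prop:ordinary_decomp}, with your added remarks on $U_p$-equivariance and compatibility of the twist being the details the paper leaves implicit.
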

\begin{proof} Part (b) follows from part (a) using the description of $\mf S$ given in the proof of Proposition~\ref{prop:ordinary_decomp}. Part (a) follows from the isomorphic inclusion $\G/\G_0(dnh)\hookrightarrow \G'/\G_0(nh)$ from Section~\ref{ssec:T-traces}.
\end{proof}

\begin{Remark}Inclusions of the form $M_k(\G,\eta)\subseteq M_k(\G',\eta')$ may have further implications for preserving average rates of $p$-adic annihilation via Remark~\ref{rmk:rates}.
\end{Remark}

We now discuss when power maps do not preserve Hauptmodul $p$-adic annihilation.

\begin{Example}\label{ex:h_power}If $\mt$ is the Hauptmodul on $\G_\lambda$ for $\G$ an $n|h$-type group, then the Hauptmodul on $\G_\lambda^{d}$ need not be annihilated when $d\divides h$, which describe all the grey boxes in the figures of Appendix \ref{apndx:web}. For example, the Hauptmodul for $6|3$ is $5$-adically annihilated, but the Hauptmodul for $2$ is not.
\end{Example}

\begin{Example}\label{ex:faux_infinity}If $\mt$ is the Hauptmodul on $\G_\lambda$ and $\mt|U_p=0$, power maps from $\G_\lambda$ may not preserve $p$-adic annihilation. For example, if $\mt$ is the Hauptmodul on $12$, $\mt|U_p=0$ while the Hauptmodul on $6$ is not $2$-adically annihilated.
\end{Example}

The classes of groups from Examples \ref{ex:h_power} and \ref{ex:faux_infinity} exhaust the situations for which power maps do not preserve Hauptmodul $p$-adic annihilation. If one further excludes the groups $\G$ with $\mt_\G|U_p = 0$, then rates of $p$-adic annihilation are preserved under power maps. These examples motivate a notion of strong $p$-adic annihilation for genus zero groups. This strong notion of annihilation will not be referenced outside of this section.

\begin{Definition}Let $\G$ be a genus zero group with Hauptmodul $\mt$. We say that $\G$ has \emph{strong $p$-adic annihilation} if every polynomial $f(\mt)$ in the Hauptmodul with no constant term is $p$-adically annihilated.
\end{Definition}

In contrast with $p$-adic annihilation of Hauptmoduln, strong $p$-adic annihilation numerically appears to be always preserved under power maps. Furthermore, our numerical data indicate that rates of strong $p$-adic annihilation are also non-decreasing under power maps. We discussed polynomials in the Hauptmodul in Remark \ref{rmk:haupt_poly}, and one finds that $\Gamma_\eta$ has strong $p$-adic annihilation if $\mf{S}_{p-1}(\G,\bm{1}_t)^0=\mathbb{F}_p$ and $\mf{S}_{p-1}(\G,[(\lambda^{\sigma_r})_t]_p)^0=0$ otherwise.

 We can check that strong $p$-adic annihilation is now preserved for the groups of Example \ref{ex:h_power}. Indeed, let $\mt$ be the Hauptmodul on $\G_\lambda$ for $\G$ an $n|h$-type group, and $\mt'$ be the Hauptmodul on $\G_\lambda^{d}$ for~$d\divides h$. We recall the~$V_m$ operator, given by $f|V_m=f\circ A$ where we set $A=\left(\begin{smallmatrix} m & 0 \\ 0 & 1 \end{smallmatrix}\right)$ as in Section~\ref{ssec:S-hecke}. It acts on Fourier expansions by $\big(\sum a(n)q^n\big)|V_m = \sum a(n)q^{mn}$. From \cite[Section~6]{Conway79}, we know that $\mt'|V_{d}+c=\mt^{d}$ for some constant $c$. In particular, if $\mt'$ is not $p$-adically annihilated, $\mt'|V_{d}$ is a polynomial in $\mt$ with no constant term and is also not $p$-adically annihilated. Thus, if $\G_\lambda^{d}$ does not have strong $p$-adic annihilation, neither does $\G_\lambda$. This reflects the following inclusions.

\begin{Proposition} \label{prop:bar_inclusion} Let $p\geq5$ and $\G$ be an $n|h$-type group with eigenvalue map $\eta$, and let $\G'$ be a $nd|hd$ type group with eigenvalue map $\eta'$ such that
 \begin{gather*}\AL(\G) = \AL(\G'), \qquad \eta'(x')=\eta(x),\qquad \eta'(y')=\eta(y),\qquad \text{and}\\
 \eta'(W_e)=\eta(W_e) \qquad \text{for all}\quad \al{e}{\G} \end{gather*}
 for $x$, $y$ and $x'$, $y'$ the usual generators of $\G_0(n|h)/\G_0(nh)$ and $\G_0(nd|hd)/\G_0\big(nhd^2\big)$.
 \begin{enumerate}\itemsep=0pt
 \item[$(a)$] There is an injection given by the operator $V_{d}\colon M_k(\G,\eta)\hookrightarrow M_k(\G',\eta')$.
 \item[$(b)$] For any $\alpha\in \Z/(2p - 2)\Z$, this gives an injection $V_{d}\colon \mf S(\G,[\eta]_p)^\alpha\hookrightarrow \mf S(\G',[\eta']_p)^\alpha.$
 \end{enumerate}
\end{Proposition}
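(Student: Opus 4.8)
The plan is to reduce both parts to one fact: conjugation by $A=\left(\begin{smallmatrix}d&0\\0&1\end{smallmatrix}\right)$ sends the standard generators of $\G$ to those of $\G'$. Part~(b) then follows formally from part~(a) together with the commutation of $U_p$ with $V_d$.

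For part~(a), I would first record the conjugation dictionary. With $x=\left(\begin{smallmatrix}1&1/h\\0&1\end{smallmatrix}\right)$, $y=\left(\begin{smallmatrix}1&0\\n&1\end{smallmatrix}\right)$ the generators of $\G_0(n|h)/\G_0(nh)$ and $x'$, $y'$ the analogues for $\G_0(nd|hd)/\G_0(nhd^2)$ (replace $h$, $n$ by $hd$, $nd$), a one-line matrix computation gives $A^{-1}xA=x'$ and $A^{-1}yA=y'$, and for $e\in\AL(\G)=\AL(\G')$ and any representative $W_e$ of $w_e\subset\G_0(n|h)$, the matrix $A^{-1}W_eA$ represents $w_e\subset\G_0(nd|hd)$. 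I would also use the standard fact that $f\in M_k(\G_0(nh))$ implies $f|V_d\in M_k(\G_0(d\,nh))\subseteq M_k(\G_0(nhd^2))$. Then, for $f\in M_k(\G,\eta)$ with $f\neq0$ and $\gamma$ any of the generators $x,y,W_e$ of $\G$, writing $\gamma'=A^{-1}\gamma A$ so that $A\gamma'=\gamma A$, the slash-operator identity $f|V_d=d^{-k/2}f|_kA$ yields
\begin{gather*}
(f|V_d)|_k\gamma' = d^{-k/2}(f|_k\gamma)|_kA = \eta(\gamma)\,d^{-k/2}(f|_kA) = \eta(\gamma)\,(f|V_d).
\end{gather*}
Since $\eta'(x')=\eta(x)$, $\eta'(y')=\eta(y)$, $\eta'(W'_e)=\eta(W_e)$, and $f|V_d$ is $\G_0(nhd^2)$-invariant, the character $\gamma'\mapsto(f|V_d)|_k\gamma'/(f|V_d)$ agrees with $\eta'$ on the generating set $\{x',y',W'_e\}\cup\G_0(nhd^2)$ of $\G'$, hence on all of $\G'$; thus $f|V_d\in M_k(\G',\eta')$. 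Injectivity is immediate because $V_d$ is injective on $q$-expansions.

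For part~(b), note that $d\mid24$ and $p\ge5$ force $p\nmid d$, so $U_p$ and $V_d$ commute on $q$-expansions. The compatibility hypotheses on $\eta'$ pass verbatim to $\eta^{\sigma_p}$, $\eta_t$, and $\eta_t^{\sigma_p}$ (for instance $(\eta')_t(W'_e)=\qr{e}{p}\eta(W_e)=\eta_t(W_e)$), and $V_d$ respects $p$-integrality, reduction mod $p$, and weight; so part~(a) gives weight-preserving injections $\tilde M_k(\G,[\eta]_p)\hookrightarrow\tilde M_k(\G',[\eta']_p)$ and $\tilde M_k(\G,[\eta_t]_p)\hookrightarrow\tilde M_k(\G',[\eta'_t]_p)$. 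The weight $k$ (with $4\le k\le p+1$, $k\equiv\alpha\Mod{p-1}$) and the case split modulo $2p-2$ used to define $\mf S(\,\cdot\,,[\,\cdot\,]_p)^\alpha$ in the proof of Proposition~\ref{prop:ordinary_decomp} depend only on $\alpha$ and $p$, so they are the same for $\G$ and $\G'$; since $V_d U_p=U_pV_d$, $V_d$ carries $\bigcap_{m\ge1}\im U_p^m$ into $\bigcap_{m\ge1}\im U_p^m$ on the relevant spaces, i.e.\ $\mf S(\G,[\eta]_p)^\alpha$ injects into $\mf S(\G',[\eta']_p)^\alpha$.

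I expect the main obstacle to be the bookkeeping in part~(a): confirming that $A^{-1}W_eA$ is genuinely a representative of the Atkin--Lehner coset $w_e$ inside $\G_0(nd|hd)$ with the same label $e$ --- in particular that $e$ stays an exact divisor (using $d\mid 24$ and $\AL(\G)=\AL(\G')$) and that the determinant normalization matches --- and that the conjugated matrices are exactly the generators for which $\eta'$ has been prescribed. Once that dictionary is in place, the transformation law, the commutation $V_dU_p=U_pV_d$, and part~(b) are all routine.
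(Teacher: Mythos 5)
Your proposal is correct and is essentially the argument the paper leaves implicit: the statement itself attributes the injection to $V_d$, i.e., to slashing with $A=\left(\begin{smallmatrix}d&0\\0&1\end{smallmatrix}\right)$, which conjugates $\G$ isomorphically onto $\G'$ (compare the two-line proof of the analogous unprimed inclusion, which likewise rests on identifying $\G/\G_0(nh)$ with $\G'/\G_0(nhd^2)$). The bookkeeping point you flag does work out: for $E\in\AL(\G')=\AL(\G)$, conjugating the level-$nhd^2$ Atkin--Lehner matrix $W_E$ gives $AW_EA^{-1}=\left(\begin{smallmatrix}aE & bd\\ cnhd & \delta E\end{smallmatrix}\right)$, which is itself a level-$nh$ Atkin--Lehner matrix for the same exact divisor $E$, so (using that $\eta$ is constant on such representatives modulo $\G_0(nh)\subseteq\ker\eta$) the character $\gamma'\mapsto\eta\big(A\gamma'A^{-1}\big)$ agrees with $\eta'$ on $x'$, $y'$ and the $W_E$, hence on all of $\G'$, and the rest of part (a) and part (b) (via $U_pV_d=V_dU_p$ for $p\nmid d\mid 24$ and the compatibility of $\sigma_p$ and the twist with the hypotheses) goes through exactly as you describe.
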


We observe empirically that strong $p$-adic annihilation is also preserved for the groups from Example~\ref{ex:faux_infinity}, e.g., $\mt^2$ is not $2$-adically annihilated when $\mt$ is the Hauptmodul on $12$. %In

Further aspects of this connection between $p$-adic properties of modular forms and the associated conjugacy classes of the monster group will be discussed in Section \ref{sec:moonshine}.

\section{Moonshine}\label{sec:moonshine}
In this section, we will investigate groups with $p$-adic moonshine. Recall from the introduction that a \textit{moonshine module} for a finite group $G$ is a graded $G$-module $V = \bigoplus_{n=-1}^\infty V_n$ such that
\begin{enumerate}\itemsep=0pt
\item[$(i)$] For each $g \in G$, the McKay--Thompson series
 \begin{gather*}\mathcal{T}_g(\t) = \sum_{n=-1}^\infty \tr(g \big| V_n) q^n\end{gather*}
 is the Hauptmodul of an order $\ord(g)$ conjugacy class of the monster.

\item[$(ii)$] For any $g \in G$ and any $n \in \Z$, if $\mathcal{T}_g$ is the Hauptmodul for $\G$, then $\mathcal{T}_{g^n}$ is the Hauptmodul for $\G^n$.
\end{enumerate}
If for some prime $p$, $V$ also satisfies the following property, then we call $V$ a \textit{$p$-adic moonshine module}.
\begin{enumerate}\itemsep=0pt
 \item[(iii)] For each irreducible character $\chi$ of $\G$, if $m_\chi(n)$ is the multiplicity of $\chi$ in the character of~$V_n$, then the multiplicity generating function
 \begin{gather*}
 \mathcal{M}_\chi(\t) = \sum_{n=-1}^\infty m_\chi(n) q^n
 \end{gather*}
 associated to $\chi$ is $p$-adically annihilated.
\end{enumerate}

Throughout this section, we assume that Conjecture \ref{conj:annihilation_converse} holds. That is, we assume that Theorem \ref{theorem:Up_annihilation} exactly characterizes which of the $171$ Hauptmoduln appearing in monstrous moonshine are $p$-adically annihilated for each prime $p$.

We begin in Section \ref{ssec:M-order} by stating basic facts about groups with $p$-adic moonshine, including the fact that for each prime $p$, only finitely many such groups exist. In Section \ref{ssec:M-examples}, we illustrate $p$-adic moonshine with several examples of groups having $p$-adic moonshine modules. In Section~\ref{ssec:M-monster}, we find a surprising class of subgroups of the monster having $p$-adic moonshine in a slightly more general sense.

\subsection{Basic facts}\label{ssec:M-order}
We begin by presenting an alternative formulation of $p$-adic moonshine that will allow us to make use of the results in Section \ref{sec:annihilation}.

\begin{Lemma}\label{lemma:multiplicities}Let $G$ be a finite group with a moonshine module $V = \bigoplus_{n=-1}^\infty V_n$. For any prime~$p$, $V$ is a $p$-adic moonshine module if and only if the McKay--Thompson series $\mathcal{T}_g$ is $p$-adically annihilated for each $g \in G$.
\end{Lemma}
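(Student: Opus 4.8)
The statement to prove is a clean equivalence, so the plan is to establish both directions via the standard duality between graded traces and multiplicities. The key linear-algebra fact is that the McKay--Thompson series $\mathcal{T}_g$ and the multiplicity generating functions $\mathcal{M}_\chi$ are related by an invertible change of basis over $\C$ coming from the character table of $G$. Concretely, for each $n$ we have $\tr(g\mid V_n) = \sum_\chi m_\chi(n)\chi(g)$, which reading off $q$-coefficients gives $\mathcal{T}_g = \sum_\chi \chi(g)\mathcal{M}_\chi$. Conversely, orthogonality of characters yields $\mathcal{M}_\chi = \frac{1}{\abs{G}}\sum_{g\in G}\conj{\chi(g)}\,\mathcal{T}_g$, where the sum runs over all of $G$ (equivalently, a weighted sum over conjugacy classes).

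For the forward direction, suppose $V$ is a $p$-adic moonshine module, so each $\mathcal{M}_\chi$ is $p$-adically annihilated, i.e., $\mathcal{M}_\chi\mid U_p^n \to 0$ uniformly $p$-adically. Since $U_p$ acts linearly on $q$-expansions and $\mathcal{T}_g = \sum_\chi \chi(g)\mathcal{M}_\chi$ is a \emph{finite} $\C$-linear combination (the sum is over the finitely many irreducible characters of $G$), we get $\mathcal{T}_g\mid U_p^n = \sum_\chi \chi(g)\,(\mathcal{M}_\chi\mid U_p^n)$. Each term tends to $0$ $p$-adically, and the character values $\chi(g)$ are algebraic integers (hence have bounded denominators, in fact lie in $\conj{\Z}$), so the finite sum also tends to $0$ $p$-adically. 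Hence each $\mathcal{T}_g$ is $p$-adically annihilated. The one subtlety worth a sentence: one must work in a fixed finite extension of $\Q_p$ containing all the relevant character values and $\abs{G}$-th roots of unity, so that ``$p$-adically annihilated'' makes sense uniformly and scaling by $\chi(g)$ preserves the limit; this is harmless since $G$ is finite.

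For the reverse direction, suppose each $\mathcal{T}_g$ is $p$-adically annihilated. Using $\mathcal{M}_\chi = \frac{1}{\abs{G}}\sum_{g\in G}\conj{\chi(g)}\,\mathcal{T}_g$ and linearity of $U_p$, we have $\mathcal{M}_\chi\mid U_p^n = \frac{1}{\abs{G}}\sum_{g\in G}\conj{\chi(g)}\,(\mathcal{T}_g\mid U_p^n)$. Again this is a finite sum, each $\mathcal{T}_g\mid U_p^n\to 0$ $p$-adically, and the coefficients $\frac{1}{\abs{G}}\conj{\chi(g)}$ have denominator dividing $\abs{G}$. The only place $p$ could interfere is the factor $1/\abs{G}$: if $p\mid\abs{G}$ this could in principle spoil the limit. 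So the plan here is to note that $\mathcal{M}_\chi$, being a multiplicity generating function, has \emph{integer} (hence $p$-integral) coefficients, and that $U_p$ preserves integrality of coefficients; therefore $\mathcal{M}_\chi\mid U_p^n$ has bounded denominators, and combined with the fact that $\abs{G}\cdot(\mathcal{M}_\chi\mid U_p^n) = \sum_{g}\conj{\chi(g)}(\mathcal{T}_g\mid U_p^n)\to 0$ $p$-adically, we conclude $\mathcal{M}_\chi\mid U_p^n\to 0$ $p$-adically as well (dividing a sequence of $p$-integral $q$-series that tends to $0$ by the fixed integer $\abs{G}$ still tends to $0$, since we already know the quotient is $p$-integral).

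The main obstacle, and the only real content beyond bookkeeping, is precisely this denominator issue in the reverse direction when $p\divides\abs{G}$: one cannot naively divide the $p$-adic limit statement for $\mathcal{T}_g$ by $\abs{G}$. The resolution is to use the \emph{a priori} integrality of the $m_\chi(n)$ together with the fact that $U_p$ sends $p$-integral $q$-expansions to $p$-integral $q$-expansions, so that the sequence $\mathcal{M}_\chi\mid U_p^n$ is known to be $p$-integral and hence its being $p$-adically small is equivalent to $\abs{G}$ times it being $p$-adically small. I would state this as the crux of the argument, cite the integrality of $U_p$ on Fourier coefficients (from Section~\ref{ssec:T-operators}), and keep the rest of the proof to a few lines of the linear-algebra identities above.
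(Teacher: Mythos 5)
Your proof is correct and follows essentially the same route as the paper: both directions come from the Schur orthogonality identities $\mathcal{T}_g=\sum_{\chi}\chi(g)\,\mathcal{M}_\chi$ and $\mathcal{M}_\chi=\frac{1}{|G|}\sum_{g\in G}\conj{\chi(g)}\,\mathcal{T}_g$, linearity of $U_p$, and the fact that character values are algebraic integers. The only difference is that you present the factor $1/|G|$ as the crux requiring the integrality of the $m_\chi(n)$, whereas it is immediate without that input (dividing by the fixed nonzero constant $|G|$ merely shifts all $p$-adic valuations by $v_p(|G|)$), which is why the paper simply observes that $|G|\,\mathcal{M}_\chi$, and hence $\mathcal{M}_\chi$, is $p$-adically annihilated.
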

\begin{proof}
 By the Schur orthogonality relations, we have
 \begin{gather*}
 \mathcal{T}_g = \sum_{\chi \in \Irrep(G)} \chi(g) \mathcal{M}_\chi \qquad \text{for each} \quad g \in G
 \end{gather*}
 and
 \begin{gather*}
 \mathcal{M}_\chi = \frac{1}{|G|} \sum_{g \in G} \conj{\chi(g)} \mathcal{T}_g \qquad \text{for each}\quad \chi \in \Irrep(G).
 \end{gather*}
 If each $\mathcal{M}_\chi$ is $p$-adically annihilated, then for all $n \in \N$, there exists $N \in \N$ such that $\mathcal{M}_\chi | U_p^N \equiv 0$ $\Mod{p^n}$. Since each $\chi(g)$ is an algebraic integer, it follows that $\mathcal{T}_g | U_p^N \equiv 0 \Mod{p^n}$, so each~$\mathcal{T}_g$ is $p$-adically annihilated. Similarly, if each $\mathcal{T}_g$ is $p$-adically annihilated, then each $|G| \mathcal{M}_\chi$, and hence each $\mathcal{M}_\chi$, is also $p$-adically annihilated.
\end{proof}

As an immediate consequence, a group $G$ has $p$-adic moonshine if and only if we can assign to each element $g \in G$ a $p$-adically annihilated Hauptmodul $\mathcal{T}_g$ in a way that agrees with power maps in $G$ and so that the multiplicities defined by
\begin{gather*}\mathcal{M}_\chi = \frac{1}{|G|} \sum_{g \in G} \conj{\chi(g)} \mathcal{T}_g\end{gather*}
have positive integral coefficients for all irreducible characters $\chi$. In particular, each McKay--Thompson series $\mathcal{T}_g$ must be a Hauptmodul for one of the groups listed in Theorem~\ref{theorem:Up_annihilation}. In fact, since we require that the assignment of group elements to Hauptmoduln agree with power maps, we can restrict our attention to only those congruence groups $\G$ with the property that $\G^n$ is $p$-adically annihilated for every $n \in \Z$. By Theorem \ref{theorem:Up_annihilation}, it follows that every McKay--Thompson series $\mathcal{T}_g$ must be a Hauptmodul for one of the groups listed in Table \ref{tbl:candidate_mts}.

\begin{table}[ht]\centering\scriptsize
\begin{tabular}{ l l|l|l|l|l }
 \multicolumn{2}{c|}{$2$} & \multicolumn{1}{c|}{$3$} & \multicolumn{1}{c|}{$5$} & \multicolumn{1}{c|}{$7$} & \multicolumn{1}{c}{$11$} \\
 \hline
 $1$ & $12|3{+}$ & $1$ & $1$ & $1$ & $1$ \\
 $2{+}$ & $12{+}3$ & $2{+}$ & $2{+}$ & $2{+}$ & $3|3$ \\
 $2$ & $12|6$ & $3{+}$ & $3{+}$ & $3|3$ & $11{+}$ \\
 $3{+}$ & $16|2{+}$ & $3$ & $3|3$ & $4|2{+}$ & \\
 $3|3$ & $16$ & $3|3$ & $4|2{+}$ & $7{+}$ & \\
 $4{+}$ & $16{+}$ & $6{+}$ & $5{+}$ & $7$ & \\
 $4|2{+}$ & $20{+}$ & $6{+}2$ & $5$ & $8|4{+}$ & \\
 $4$ & $20|2{+}$ & $9{+}$ & $6{+}$ & $14{+}$ & \\
 $4|2$ & $20|2{+}5$ & $9$ & $7{+}$ & $21|3{+}$ & \\
 $5{+}$ & $22{+}$ & $18{+}2$ & $8|4{+}$ & $28|2{+}$ & \\
 $6{+}$ & $22{+}11$ & $18{+}$ & $10{+}$ & & \\
 $6{+}3$ & $24|2{+}$ & $27{+}$ & $10{+}2$ & & \\
 $6|3$ & $24{+}$ & $54{+}$ & $15{+}$ & & \\
 $8{+}$ & $24|2{+}3$ & & $15|3$ & & \\
 $8|2{+}$ & $24|6{+}$ & & $20|2{+}$ & & \\
 $8|4{+}$ & $24|12$ & & $21|3{+}$ & & \\
 $8|2$ & $32{+}$ & & $25{+}$ & & \\
 $8$ & $32|2{+}$ & & $30{+}$ & & \\
 $8|4$ & $40|4{+}$ & & $35{+}$ & & \\
 $10{+}$ & $40|2{+}$ & & $40|4{+}$ & & \\
 $10{+}5$ & $44{+}$ & & $50{+}$ & & \\
 $11{+}$ & $48|2{+}$ & & & & \\
 $12{+}$ & $88|2{+}$ & & & & \\
 $12|2{+}$ & & & & &
\end{tabular}
\caption{Candidate McKay--Thompson series for groups with $p$-adic moonshine.}\label{tbl:candidate_mts}
\end{table}

As a first application of this simpler description of $p$-adic moonshine, the following proposition bounds the powers of primes dividing the orders of groups with $p$-adic moonshine for each prime~$p$.

\begin{Proposition}\label{prop:order_bounds}Let $p$ be a prime and $G$ be a group with $p$-adic moonshine. Assuming that Conjecture~{\rm \ref{conj:annihilation_converse}} holds, the following table gives for each prime~$q$ a value $r$ such that every group~$G$ with $p$-adic moonshine satisfies $v_q(|G|) \le r$.
$$\begin{array}{l|c c c c c c}
 & p = 2 & p = 3 & p = 5 & p = 7 & p = 11 & p > 11 \\
 \hline
 q = 2 & 46 & 12 & 15 & 15 & 0 & 0 \\
 q = 3 & 8 & 21 & 8 & 3 & 3 & 0 \\
 q = 5 & 3 & 0 & 9 & 0 & 0 & 0 \\
 q = 7 & 0 & 0 & 2 & 6 & 0 & 0 \\
 q = 11 & 2 & 0 & 0 & 0 & 2 & 0 \\
 q > 11 & 0 & 0 & 0 & 0 & 0 & 0
\end{array}$$
\end{Proposition}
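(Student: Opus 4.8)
The plan is to bound each prime power dividing $|G|$ by passing to a Sylow subgroup, to pin down the possible structure of that subgroup from the list of admissible McKay--Thompson series, and then to read the tabulated exponents off the requirement that the multiplicity generating functions have non‑negative integral $q$-expansions.

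First I would reduce to $q$-groups. If $Q$ is a Sylow $q$-subgroup of $G$ then $v_q(|G|)=v_q(|Q|)$, and $\mathrm{Res}^G_Q V$ is again a moonshine module: properties (i) and (ii) are inherited verbatim because the series $\mathcal T_g$ for $g\in Q$ are unchanged. Every such $\mathcal T_g$ is $p$-adically annihilated (as $g\in G$), so by Lemma~\ref{lemma:multiplicities} the restriction is a $p$-adic moonshine module for $Q$. Thus it suffices to bound $|Q|$ for every finite $q$-group $Q$ carrying a $p$-adic moonshine module. For such a $Q$ and $g\in Q$, the series $\mathcal T_g$ is the Hauptmodul of a monster class of order $\ord(g)=q^k$; since the group attached to an order‑$m$ monster class has level $m$, and since property (ii) together with $Q\subseteq G$ forces $\Gamma_g^{\,j}$ to be $p$-adically annihilated for all $j$, the group $\Gamma_g$ must be one of those in Table~\ref{tbl:candidate_mts}. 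Reading off the $q$-power levels in the relevant column bounds $\exp(Q)$; in several cases (e.g.\ $(p,q)=(2,3)$, where only levels $1$ and $3$ occur) this already forces $Q$ to be elementary abelian, and in general it confines $Q$ to a finite list of isomorphism types, either directly or after passing to abelian subgroups (which themselves inherit $p$-adic moonshine) and invoking the standard fact that a $q$-group of order $q^n$ contains a sizeable abelian subgroup.

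With $Q$ restricted to finitely many structures, the remaining task is to maximize $|Q|$ over all assignments $g\mapsto\mathcal T_g$ that are compatible with conjugation and with the monster's power maps; there are finitely many of these, as only finitely many Hauptmoduln occur. For each assignment the constraint is that $\mathcal M_\chi=\frac1{|Q|}\sum_{g\in Q}\conj{\chi(g)}\,\mathcal T_g$ lie in $\psring{\Z_{\ge 0}}{q}$ for every $\chi\in\Irrep(Q)$; after clearing denominators in a suitable cyclotomic ring this is an integrality condition on the $q$-expansion coefficients of an explicit weakly holomorphic modular form, which — after multiplying by an appropriate power of the form from Lemma~\ref{lemma:delta_quotient} — can be checked on finitely many Fourier coefficients via Sturm's bound (Lemma~\ref{lemma:sturm}). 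Carrying out these finite checks (for instance, for elementary abelian $Q$ and a nontrivial linear $\chi$ the condition becomes $|Q|\mid$ every coefficient of an integer combination of the forms $\mathcal T_g-J$) yields the exponents $r$ in the table.

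The main obstacle here is bookkeeping rather than any single deep step: one has to be certain that the list of admissible structures of $Q$ and of power‑map‑compatible assignments $g\mapsto\mathcal T_g$ is complete, and that the coefficient computations are carried well past the relevant Sturm bound to be rigorous. A secondary point worth flagging is that $r$ is not simply $v_q(|\M|)$ — the entry $r=21$ for $p=q=3$ exceeds $v_3(|\M|)=20$ — so the argument cannot be shortcut by showing that a group with $p$-adic moonshine embeds in the monster; the extremal $q$-group genuinely lies outside $\M$.
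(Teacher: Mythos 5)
Your Sylow reduction is legitimate but not where the difficulty lies, and the step that is supposed to make everything finite fails. Restricting $V$ to a Sylow $q$-subgroup $Q$ does give a $p$-adic moonshine module for $Q$ (the traces are unchanged, so properties (i), (ii) are inherited and Lemma~\ref{lemma:multiplicities} applies), but your claim that bounding $\exp(Q)$ by the $q$-power orders occurring in Table~\ref{tbl:candidate_mts} ``confines $Q$ to a finite list of isomorphism types'' is false: bounded exponent does not bound the order of a $q$-group (elementary abelian $q$-groups have exponent $q$ and arbitrarily large order). The fallback you sketch -- passing to abelian subgroups and invoking the fact that a group of order $q^n$ contains an abelian subgroup of order $q^m$ with $m(m+1)/2\ge n$ -- would, even if carried out, produce bounds of a different (and weaker) shape than the tabulated entries $46,21,15,\dots$, and nothing in the proposal explains how those specific numbers would emerge. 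So the ``finite check'' you describe is not actually finite as structured, and the proof has a genuine gap at exactly the point where the order bound is supposed to come from.

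The paper's proof uses no structure theory of $G$ at all. The $r=0$ entries follow from Cauchy's theorem: if $q\mid |G|$ then $G$ has an element of order $q$, so an order-$q$ Hauptmodul must appear in the relevant column of Table~\ref{tbl:candidate_mts}, and none does. For $r>0$, only the trivial character is used: if $a_i$ denotes the number of elements of $G$ whose McKay--Thompson series is $\mathcal{T}_i$, integrality of $\mathcal{M}_\e = \frac{1}{|G|}\big(J + a_1\mathcal{T}_1 + \cdots + a_n\mathcal{T}_n\big)$ forces $J + a_1\mathcal{T}_1 + \cdots + a_n\mathcal{T}_n \equiv 0 \Mod{|G|}$, hence modulo $q^{v_q(|G|)}$; one then checks computationally (a kernel computation on the matrix of Fourier coefficients modulo $q^{r+1}$, following the method of \cite{DeHority18}) that no choice of the $a_i$ makes this congruence hold modulo $q^{r+1}$, which yields $v_q(|G|)\le r$. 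Note that ruling out such a congruence needs only finitely many coefficients and no Sturm bound; Sturm's bound is needed for the affirmative integrality verifications elsewhere in Section~\ref{ssec:M-examples}, not here. Your closing parenthetical about $|Q|$ dividing the coefficients of integer combinations of the $\mathcal{T}_g - J$ is the germ of this argument, but in your write-up it appears only downstream of the unfounded finiteness claim, so as written the argument does not establish the table.
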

\begin{proof} First, note that if $q \divides |G|$, then by Cauchy's theorem, $G$ contains an element of order $q$, so there must be some Hauptmodul corresponding to a $q|h$-type group in Table~\ref{tbl:candidate_mts}. This proves all of the cases with $r = 0$ above.

For the remaining cases with $r > 0$, we follow the method given in \cite{DeHority18}. Let $J, \mathcal{T}_1, \dots, \mathcal{T}_n$ be the distinct Hauptmoduln given in Table~\ref{tbl:candidate_mts} as candidate McKay--Thompson series for groups with $p$-adic moonshine. For each $i \in \{1, \dots, n\}$, let $a_i$ be the number of elements of $G$ whose McKay--Thompson series is $\mathcal{T}_i$. Then, the multiplicity of the trivial character $\e$ is
 \begin{gather*}\mathcal{M}_\e
 = \frac{1}{|G|} \sum_{g \in G} \mathcal{T}_g
 = \frac{1}{|G|} \prn{J + a_1 \mathcal{T}_1 + \dots + a_n \mathcal{T}_n}.\end{gather*}
 In particular, since $\mathcal{M}_\e$ must have integral coefficients, it follows that
 \begin{gather*}J + a_1 \mathcal{T}_1 + \dots + a_n \mathcal{T}_n \equiv 0 \Mod{|G|}.\end{gather*}
 Therefore, if we choose $r$ large enough so that there are no coefficients $a_1, \dots, a_n$ such that
 \begin{gather*}J + a_1 \mathcal{T}_1 + \dots + a_n \mathcal{T}_n \equiv 0 \Mod{q^{r+1}},\end{gather*}
 then we have shown that $v_q(|G|) \le r$. As in \cite{DeHority18}, this computation was carried out using Sage~\cite{Sage} by computing the kernel of the matrix of coefficients of~$J, \mathcal{T}_1, \dots, \mathcal{T}_n$.
\end{proof}

\subsection[Examples of groups with $p$-adic moonshine]{Examples of groups with $\boldsymbol{p}$-adic moonshine}\label{ssec:M-examples}
We begin by illustrating the process of showing a group has $p$-adic moonshine using the group~$A_5$ and the prime $p = 5$. The character table for~$A_5$ is given below:
\begin{table}[h!]
 \centering
 $\begin{array}{c|ccccc}
 A_5 & 1 & (1\;2)(3\;4) & (1\;2\;3) & (1\;2\;3\;4\;5) & (1\;3\;4\;5\;2) \\
 \hline
 \e & 1 & 1 & 1 & 1 & 1 \\
 \chi_1 & 4 & 0 & 1 & -1 & -1 \\
 \chi_2 & 5 & 1 & -1 & 0 & 0 \\
 \chi_3 & 3 & -1 & 0 & \frac{1+\sqrt{5}}{2} & \frac{1-\sqrt{5}}{2} \\
 \chi_4 & 3 & -1 & 0 & \frac{1-\sqrt{5}}{2} & \frac{1+\sqrt{5}}{2}
 \end{array}$
 \caption{Character Table for $A_5$.}\label{tab:A5_character_table}
\end{table}

The square of any element of the fourth conjugacy class listed in the character table is in the fifth conjugacy class, so any moonshine module must have the same McKay--Thompson series for elements of those two conjugacy classes. The only other non-trivial power relations come from the fact that $g^{\ord(g)} = 1$ for any $g$, so a possible assignment of $5$-adically annihilated McKay--Thompson series that agrees with power maps in $A_5$ is given by assigning $\mathcal{T}_1$ to the element of the conjugacy class of $1$, $\mathcal{T}_{2{+}}$ to the elements of the conjugacy class of $(1\;2)(3\;4)$, $\mathcal{T}_{3|3}$ to the elements of the conjugacy class of $(1\;2\;3)$, and $\mathcal{T}_5$ to the elements of the conjugacy classes of $(1\;2\;3\;4\;5)$ and $(1\;3\;4\;5\;2)$. The associated multiplicities are then given by
\begin{gather*}
 \mathcal{M}_\e = q^{-1} + 4378 q + 382380 q^2 + 14714988 q^3 + 340105628 q^4 + O\big(q^5\big), \\
 \mathcal{M}_{\chi_1} = 13122 q + 1432996 q^2 + 57620010 q^3 + 1349723748 q^4 + O\big(q^5\big), \\
 \mathcal{M}_{\chi_2} = 17500 q + 1815128 q^2 + 72334998 q^3 + 1689829376 q^4 + O\big(q^5\big), \\
 \mathcal{M}_{\chi_3} = 8753 q + 1050626 q^2 + 42904992 q^3 + 1009618126 q^4 + O\big(q^5\big), \qquad \text{and} \\
 \mathcal{M}_{\chi_4} = 8753 q + 1050626 q^2 + 42904992 q^3 + 1009618126 q^4 + O\big(q^5\big).
\end{gather*}
In order to show that this gives a valid moonshine module, we must show that these multiplicities are both positive and integral. For positivity, we may use inequality~(4.16) in \cite{DeHority18}. Indeed, this inequality holds for $n = 2$, and hence for all $n \ge 2$ since the left-hand side is monotonically increasing. Since the first coefficients of each multiplicity generating function are positive, this implies that all of them must be. For integrality, we may simply note that each multiplicity generating function is on $\G_0(90)$, so we may use Sturm's bound to reduce the computation to checking only the coefficients up to $q^{216}$. Using Sage \cite{Sage}, it turns out to indeed be the case that all of the coefficients are integers. Thus, $A_5$ has $5$-adic moonshine with the McKay--Thompson series given above.

In fact, it turns out that $p$-adic moonshine is not such a rare phenomenon among groups with small orders. Indeed, using Sage \cite{Sage}, we have computed that for every prime $p$, every group $G$ of order at most $25$ for which there is some assignment of $p$-adically annihilated Hauptmoduln to elements of $G$ obeying power maps has $p$-adic moonshine. In fact, out of all $45252$ such feasible assignments, only $11$ do not give rise to a $p$-adic moonshine module, and all $11$ exceptions are for $p = 2$ and the group $G = \Zmod{2} \times \prn{\Zmod{3}}^2$.

In certain special cases, these computations become somewhat simpler. For example, consider the case of a non-trivial group $G$ in which we assign every non-identity element the same Hauptmodul~$\mathcal{T}$. In particular, this means that every non-identity element of~$G$ must have order~$q$ for some prime~$q$. We will characterize exactly when $G$ has a $p$-adic moonshine module under this assignment.

Under such an assignment, the multiplicity of the trivial character $\e$ is given by
\begin{gather*}\mathcal{M}_\e
= \frac{1}{|G|} \sum_{g \in G} \mathcal{T}_g
= \frac{1}{|G|} (J - \mathcal{T}) + \mathcal{T},\end{gather*}
and the multiplicity of any non-trivial character $\chi$ is given by
\begin{gather*}\mathcal{M}_\chi
= \frac{1}{|G|} \sum_{g \in G} \conj{\chi(g)} \mathcal{T}_g
= \frac{1}{|G|} \prn{\conj{\chi(e)} J + \sum_{g \in G - \{e\}} \conj{\chi(g)} \mathcal{T}}
= \frac{\dim \chi}{|G|} (J - \mathcal{T}).\end{gather*}
These multiplicities are both integral if and only if $|G| \divides (J - \mathcal{T})$, and for checking positivity, we may once again use inequality (4.16) in \cite{DeHority18}. After a computation in Sage \cite{Sage}, we have the following result.

\begin{Proposition}\label{prop:pgroup_moonshine}
 Let $p$ and $q$ be primes, $G$ be a group of exponent $q$, and $\mathcal{T}$ be a Hauptmodul for one of the order $q$ conjugacy class of the monster. Then, assuming that Conjecture~{\rm \ref{conj:annihilation_converse}} holds, $G$ has a $p$-adic moonshine module in which the McKay--Thompson series for each non-identity element is $\mathcal{T}$ if and only if $p$ and $\mathcal{T}$ appear in the following table and $|G| \le q^r$ where $r$ is the corresponding entry in the third row.
$$\begin{array}{c|l l l l l l l l l l l}
 p & 2 & 2 & 2 & 3 & 3 & 3 & 5 & 5 & 7 & 7 & 11 \\ \hline
 \mathcal{T} & 2{+} & 2 & 3|3 & 3{+} & 3 & 3|3 & 5{+} & 5 & 7{+} & 7 & 11{+} \\ \hline
 r & 12 & 13 & 1 & 6 & 9 & 3 & 3 & 5 & 2 & 4 & 2
 \end{array}$$
\end{Proposition}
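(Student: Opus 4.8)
The plan is to reduce the statement to the finite verification sketched just before the proposition, after first pinning down which groups $G$ can occur, which pairs $(p,\mathcal{T})$ can occur, and exactly which two remaining conditions — integrality and positivity — must be checked. First I would observe that since $G$ has exponent $q$ with $q$ prime, every non-identity element has order exactly $q$; by Cauchy's theorem no prime other than $q$ divides $|G|$, so $G$ is a $q$-group and $|G| = q^k$ for some $k \geq 1$. The assignment $\mathcal{T}_1 = J$, $\mathcal{T}_g = \mathcal{T}$ for $g \neq 1$ is constant on conjugacy classes, and the only nontrivial power relation in $G$ is $g^q = 1$ for $g\neq 1$; writing $\G_\lambda$ for the eigengroup of $\mathcal{T}$, we have $\G_\lambda^q = \SL_2(\Z)$ because in monstrous moonshine $\mathcal{T}_{g^q} = \mathcal{T}_1 = J$ is the Hauptmodul of $\G_\lambda^q$ for a monster element $g$ of order $q$. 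Hence our assignment satisfies condition~(ii), condition~(i) holds by hypothesis on $\mathcal{T}$, and it only remains to decide when the multiplicities $\mathcal{M}_\e = \frac{1}{|G|}(J - \mathcal{T}) + \mathcal{T}$ and $\mathcal{M}_\chi = \frac{\dim\chi}{|G|}(J - \mathcal{T})$ (for $\chi \neq \e$) are non-negative integers — equivalently, when the $\mathcal{T}_g$ arise from a genuine graded $G$-module — and when the resulting module is moreover $p$-adic.

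Next I would cut the problem down to finitely many pairs $(p,\mathcal{T})$. By Lemma~\ref{lemma:multiplicities}, a moonshine module of this form is a $p$-adic moonshine module if and only if each $\mathcal{T}_g$ — that is, both $J$ and $\mathcal{T}$ — is $p$-adically annihilated. The congruences~(\ref{eqn:j_congruences}) show $J$ is $p$-adically annihilated for $p \in \{2,3,5,7,11\}$, and Remark~\ref{rmk:J_no_annihilation} shows it is not for $p \geq 13$, so necessarily $p \leq 11$. Assuming Conjecture~\ref{conj:annihilation_converse}, Theorem~\ref{theorem:Up_annihilation} identifies the $p$-adically annihilated Hauptmoduln of monstrous moonshine as precisely those in Table~\ref{tbl:Up_annihilation}; intersecting this list with the Hauptmoduln attached to the prime-order conjugacy classes of $\M$ and letting $p$ range over $\{2,3,5,7,11\}$ leaves only a finite list of candidate pairs $(p,\mathcal{T})$, with $q$ determined as the order of $\mathcal{T}$'s class.

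For each candidate pair two finite checks remain. For \emph{integrality}: since $\mathcal{T}$ has integer coefficients, $\mathcal{M}_\e$ is integral exactly when every Fourier coefficient of $J - \mathcal{T}$ is divisible by $|G| = q^k$, and once this holds every $\mathcal{M}_\chi$ is integral too (as $\dim\chi \mid |G|$); thus the integral assignments are those with $k \leq \inf_n v_q(c_J(n) - c_{\mathcal{T}}(n))$, which is finite unless $J = \mathcal{T}$. Since $J - \mathcal{T}$ is a weight-$0$ weakly holomorphic modular function on a congruence group of controlled level, multiplying it by a suitable power of $\Delta$ (or of the form from Lemma~\ref{lemma:delta_quotient}) puts it in a space to which Sturm's bound (Lemma~\ref{lemma:sturm}) applies, so finitely many coefficients determine that valuation. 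For \emph{positivity}: since $c_J(n) = \dim V^\natural_n \geq \tr(g\mid V^\natural_n) = c_{\mathcal{T}}(n)$, the function $J - \mathcal{T}$ has non-negative coefficients, so each $\mathcal{M}_\chi$ with $\chi\neq\e$ is automatically non-negative; for $\mathcal{M}_\e = \frac{1}{|G|}J + (1 - \frac{1}{|G|})\mathcal{T}$ one invokes inequality~(4.16) of~\cite{DeHority18}, whose left-hand side is monotone so that checking it at the least relevant weight suffices, and — since increasing $|G|$ only shifts weight from $\mathcal{T}$ toward $J$ — once positivity holds at the largest admissible $|G|$ it holds for all smaller ones. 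Running these two checks for each candidate pair in Sage~\cite{Sage}, as in~\cite{DeHority18}, then leaves exactly the pairs and bounds $r$ recorded in the table.

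The conceptual content here is modest; the real work, and the place I expect the subtlety to live, is making the two reductions airtight. One must (i) verify that Sturm's bound genuinely applies to $J - \mathcal{T}$ after its poles are cleared, so that a finite coefficient computation really pins down $\inf_n v_q(c_J(n) - c_{\mathcal{T}}(n))$, and (ii) confirm that the positivity inequality, once checked at a single weight and a single value of $|G|$, does propagate to all coefficients and all admissible $|G|$ — otherwise the exact cutoff $|G| \leq q^r$ in the statement would not be the correct answer.
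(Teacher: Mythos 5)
Your proposal is correct and follows essentially the same route as the paper: it likewise derives $\mathcal{M}_\e = \frac{1}{|G|}(J-\mathcal{T}) + \mathcal{T}$ and $\mathcal{M}_\chi = \frac{\dim\chi}{|G|}(J-\mathcal{T})$, reduces integrality to the coefficientwise divisibility $|G| \divides (J - \mathcal{T})$, handles positivity with inequality~(4.16) of~\cite{DeHority18}, restricts the candidate pairs $(p,\mathcal{T})$ via Lemma~\ref{lemma:multiplicities} together with Theorem~\ref{theorem:Up_annihilation} and Conjecture~\ref{conj:annihilation_converse}, and obtains the table by a finite computation in Sage~\cite{Sage}. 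One small slip: increasing $|G|$ shifts the convex combination $\frac{1}{|G|}J + \big(1-\frac{1}{|G|}\big)\mathcal{T}$ toward $\mathcal{T}$, not toward $J$, but since $J-\mathcal{T}$ has nonnegative coefficients your stated conclusion (positivity at the largest admissible $|G|$ implies it for all smaller orders) is the correct monotonicity.
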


\subsection{Centralizers in the monster}\label{ssec:M-monster}
Other examples of $p$-adic moonshine come from subgroups of the monster meeting only those conjugacy classes whose Hauptmoduln are $p$-adically annihilated, though Proposition~\ref{prop:pgroup_moonshine} shows that not every group with $p$-adic moonshine is of this form. In this section, we will exhibit a~surprising class of subgroups of the monster having $p$-adic moonshine in a slightly more general sense.

Specifically, we say that a modular function $f$ is \textit{weakly $p$-adically annihilated} if $f | U_p^n \equiv 0$ $\Mod{p}$ for some $n \ge 0$, and that a moonshine module is a \textit{weakly $p$-adic moonshine module} if each McKay--Thompson series is weakly $p$-adically annihilated. For $p \ge 5$, every McKay--Thompson series in monstrous moonshine that is weakly $p$-adically annihilated is also $p$-adically annihilated, so weakly $p$-adic moonshine and $p$-adic moonshine coincide. For $p \in \{2,3\}$, however, these notions diverge. In addition to those in Table~\ref{tbl:Up_annihilation}, the McKay--Thompson series given in Table~\ref{tbl:weak_annihilation} are weakly $p$-adically annihilated. In each case, this may be verified with a finite check using Sturm's bound (with at most $3500$ coefficients).

\begin{table}[h!]\centering\scriptsize
\begin{tabular}{ l l l|l l l }
 \multicolumn{3}{c|}{$2$} & \multicolumn{3}{c}{$3$} \\
 \hline
 $3$ & $15{+}$ & $30{+}$ & $2$ & $13{+}$ & $28{+}$ \\
 $5$ & $15{+}5$ & $30{+}3,5,15$ & $4{+}$ & $14{+}$ & $30{+}6,10,15$ \\
 $6{+}6$ & $15|3$ & $30{+}5,6,30$ & $4|2{+}$ & $14{+}7$ & $30{+}$ \\
 $6{+}2$ & $17{+}$ & $30|3{+}10$ & $4|2$ & $15{+}$ & $31{+}$ \\
 $6$ & $18{+}$ & $33{+}$ & $5{+}$ & $15{+}5$ & $34{+}$ \\
 $7{+}$ & $18{+}9$ & $34{+}$ & $5$ & $15{+}15$ & $36{+}$ \\
 $7$ & $19{+}$ & $35{+}$ & $6{+}6$ & $16|2{+}$ & $36|2{+}$ \\
 $9{+}$ & $20{+}4$ & $36{+}$ & $6{+}3$ & $17{+}$ & $39{+}$ \\
 $10{+}2$ & $20{+}20$ & $38{+}$ & $6$ & $18{+}9$ & $40|4{+}$ \\
 $10{+}10$ & $21{+}$ & $41{+}$ & $7{+}$ & $18{+}18$ & $40|2{+}$ \\
 $10$ & $21{+}21$ & $42{+}$ & $8|2{+}$ & $19{+}$ & $42{+}$ \\
 $12{+}4$ & $24{+}8$ & $42{+}6,14,21$ & $8|4$ & $20|2{+}$ & $45{+}$ \\
 $12{+}12$ & $24{+}24$ & $51{+}$ & $10{+}$ & $20|2{+}5$ & $48|2{+}$ \\
 $13{+}$ & $26{+}$ & $56{+}$ & $10{+}10$ & $20|2{+}10$ & $51{+}$ \\
 $13$ & $26{+}26$ & $60{+}$ & $12{+}$ & $21{+}$ & $60|2{+}$ \\
 $14{+}$ & $28{+}$ & $66{+}$ & $12{+}4$ & $24|2{+}$ & $60|2{+}5,6,30$ \\
 $14{+}7$ & $29{+}$ & $70{+}$ & $12|2{+}$ & $24|4{+}6$ & $62{+}$ \\
 $14{+}14$ & & & $12|2{+}6$ & $26{+}$ & $78{+}$ \\
 & & & $12|2{+}2$ & $28|2{+}$ & $84|2{+}$
\end{tabular}
\caption{Additional weakly $p$-adically annihilated McKay--Thompson series.}\label{tbl:weak_annihilation}
\end{table}

Using GAP \cite{GAP}, we have found that for each $p \in {2, 3, 5, 7, 11}$, the centralizer of a $pA$-pure elementary abelian subgroup of the monster of order $p^2$ has weakly $p$-adic moonshine given by restricting the monster module. The ATLAS names of these groups, which were found using~\cite{Atlas, Wilson88}, are given in Table \ref{tbl:monster_subgroups}. In fact, each group in Table \ref{tbl:monster_subgroups} intersects only those conjugacy classes whose Hauptmoduln $\mathcal{T}$ satisfy $\mathcal{T} | U_p \equiv 0 \Mod{p}$, which is somewhat stronger than weakly $p$-adic annihilation.

\begin{table}[ht]\centering
\begin{tabular}{ c|c|c|c|c|c }
 $p$ & $2$ & $3$ & $5$ & $7$ & $11$ \\ \hline
 $C\big(pA^2\big)$ & $2^2 \cdot {}^2\!E_6(2)$ & $3^2 \times O_8^+(3)$ & $5^2 \times U_3(5)$ & $7^2 \times L_2(7)$ & $11^2$ \tsep{2pt}\bsep{2pt}\\ \hline
\tsep{2pt} $\#C\big(pA^2\big)$ & $2^{38}{\cdot}3^9{\cdot}5^2{\cdot}7^2{\cdot}11{\cdot}13{\cdot}17{\cdot}19$ & $2^{12} {\cdot} 3^{14} {\cdot} 5^2 {\cdot} 7 {\cdot} 13$ & $2^4 {\cdot} 3^2 {\cdot} 5^5 {\cdot} 7$ & $2^3 {\cdot} 3 {\cdot} 7^3$ & $11^2$
\end{tabular}
\caption{Subgroups of the monster with weakly $p$-adic moonshine.}\label{tbl:monster_subgroups}
\end{table}

In light of this, it is natural to ask whether there are other natural subgroups of the monster having $p$-adic or weakly $p$-adic moonshine and whether there is an explanation intrinsic to the monster for the existence of weakly $p$-adic moonshine for these subgroups. More generally, we pose the question of whether the results of this paper extend to other known cases of moonshine, such as Conway moonshine~\cite{Duncan15B}, umbral moonshine \cite{Duncan15}, and Thompson moonshine \cite{Harvey15}. Do analogues of the groups in Table~\ref{tbl:monster_subgroups} exist for these other groups?

\appendix

\section{Table of annihilation}\label{apndx:table}
The following table gives the precise congruences that numerically appear to be satisfied by each Hauptmodul. The notation $a_1, \dots, a_m \to b_1, \dots, b_n$ means the sequence beginning $a_1, \dots, a_m$, and then each subsequent term is given by adding $b_{k \Mod{n}}$ from $k=1$ to $\infty$. For example, $0, 1 \to 0, 3$ is the sequence $0,1,1,4,4,7,7,10,10,\dots$. The entry under $p$ for the group $\G$ indicates the sequence $a_1, a_2, a_3, \dots$ such that $a_n$ is the highest power of $p$ dividing $\mt_\G | U_p^n$. If no such cyclic pattern is clear, then we simply list the first few terms of the sequence in parentheses.

\begin{scriptsize}
\begin{longtable}{l|l|l|l|l|l|l}
 Class & Group & $p = 2$ & $p = 3$ & $p = 5$ & $p = 7$ & $p = 11$ \\ \hline
 1A & $1$ & $11 \to 3$ & $5 \to 2$ & $2 \to 1$ & $1 \to 1$ & $1 \to 1$ \\
 2A & $2{+}$ & $11 \to 3$ & $(3,5,9,9,11)$ & $(1,2,3,5,5)$ & $1 \to 1$ & $0 \to 0$ \\
 2B & $2$ & $11 \to 3$ & $1 \to 0$ & $0 \to 0$ & $0 \to 0$ & $0 \to 0$ \\
 3A & $3{+}$ & $5 \to 1$ & $5 \to 2$ & $(1,2,3,5,5)$ & $0 \to 0$ & $0 \to 0$ \\
 3B & $3$ & $2 \to 0$ & $5 \to 2$ & $0 \to 0$ & $0 \to 0$ & $0 \to 0$ \\
 3C & $3|3$ & $3 \to 3,0$ & $\infty$ & $0 \to 1,0$ & $1 \to 1$ & $0 \to 1,0$ \\
 4A & $4{+}$ & $11 \to 3$ & $1 \to 0$ & $0 \to 0$ & $0 \to 0$ & $0 \to 0$ \\
 4B & $4|2{+}$ & $\infty$ & $1 \to 0$ & $1 \to 1$ & $1 \to 1$ & $0 \to 1,0$ \\
 4C & $4$ & $\infty$ & $0 \to 0$ & $0 \to 0$ & $0 \to 0$ & $0 \to 0$ \\
 4D & $4|2$ & $\infty$ & $1 \to 0$ & $0 \to 0$ & $0 \to 0$ & $0 \to 0$ \\
 5A & $5{+}$ & $3 \to 2,1,3,0$ & $1 \to 0$ & $2 \to 1$ & $0 \to 0$ & $0 \to 0$ \\
 5B & $5$ & $1 \to 0$ & $1 \to 0$ & $2 \to 1$ & $0 \to 0$ & $0 \to 0$ \\
 6A & $6{+}$ & $5 \to 1$ & $(3,5,9,9,11)$ & $1 \to 1$ & $0 \to 0$ & $0 \to 0$ \\
 6B & $6{+}6$ & $2 \to 0$ & $1 \to 0$ & $0 \to 0$ & $0 \to 0$ & $0 \to 0$ \\
 6C & $6{+}3$ & $5 \to 1$ & $1 \to 0$ & $0 \to 0$ & $0 \to 0$ & $0 \to 0$ \\
 6D & $6{+}2$ & $2 \to 0$ & $(3,5,9,9,11)$ & $0 \to 0$ & $0 \to 0$ & $0 \to 0$ \\
 6E & $6$ & $2 \to 0$ & $1 \to 0$ & $0 \to 0$ & $0 \to 0$ & $0 \to 0$ \\
 6F & $6|3$ & $3 \to 3,0$ & $\infty$ & $0 \to 1,0$ & $0 \to 0$ & $0 \to 0$ \\
 7A & $7{+}$ & $2 \to 0$ & $1 \to 0$ & $1 \to 1$ & $1 \to 1$ & $0 \to 0$ \\
 7B & $7$ & $2 \to 0$ & $0 \to 0$ & $0 \to 0$ & $1 \to 1$ & $0 \to 0$ \\
 8A & $8{+}$ & $7 \to 3$ & $0 \to 0$ & $0 \to 0$ & $0 \to 0$ & $0 \to 0$ \\
 8B & $8|2{+}$ & $\infty$ & $1 \to 0$ & $0 \to 0$ & $0 \to 0$ & $0 \to 0$ \\
 8C & $8|4{+}$ & $\infty$ & $0 \to 1,0$ & $1 \to 1$ & $0 \to 1,0$ & $0 \to 0$ \\
 8D & $8|2$ & $\infty$ & $0 \to 0$ & $0 \to 0$ & $0 \to 0$ & $0 \to 0$ \\
 8E & $8$ & $\infty$ & $0 \to 0$ & $0 \to 0$ & $0 \to 0$ & $0 \to 0$ \\
 8F & $8|4$ & $\infty$ & $1 \to 0$ & $0 \to 1,0$ & $0 \to 0$ & $0 \to 0$ \\
 9A & $9{+}$ & $1 \to 0$ & $5 \to 2$ & $0 \to 0$ & $0 \to 0$ & $0 \to 0$ \\
 9B & $9$ & $0 \to 0$ & $\infty$ & $0 \to 0$ & $0 \to 0$ & $0 \to 0$ \\
 10A & $10{+}$ & $3 \to 2,1,3,0$ & $1 \to 0$ & $(1,2,3,5,5)$ & $0 \to 0$ & $0 \to 0$ \\
 10B & $10{+}5$ & $3 \to 2,1,3,0$ & $0 \to 0$ & $0 \to 0$ & $0 \to 0$ & $0 \to 0$ \\
 10C & $10{+}2$ & $1 \to 0$ & $0 \to 0$ & $(1,2,3,5,5)$ & $0 \to 0$ & $0 \to 0$ \\
 10D & $10{+}10$ & $1 \to 0$ & $1 \to 0$ & $0 \to 0$ & $0 \to 0$ & $0 \to 0$ \\
 10E & $10$ & $1 \to 0$ & $0 \to 0$ & $0 \to 0$ & $0 \to 0$ & $0 \to 0$ \\
 11A & $11{+}$ & $1 \to 1,0$ & $0 \to 0$ & $0 \to 0$ & $0 \to 0$ & $1 \to 1$ \\
 12A & $12{+}$ & $5 \to 1$ & $1 \to 0$ & $0 \to 0$ & $0 \to 0$ & $0 \to 0$ \\
 12B & $12{+}4$ & $2 \to 0$ & $1 \to 0$ & $0 \to 0$ & $0 \to 0$ & $0 \to 0$ \\
 12C & $12|2{+}$ & $\infty$ & $1 \to 0$ & $0 \to 0$ & $0 \to 0$ & $0 \to 0$ \\
 12D & $12|3{+}$ & $3 \to 3,0$ & $\infty$ & $0 \to 1,0$ & $0 \to 0$ & $0 \to 0$ \\
 12E & $12{+}3$ & $\infty$ & $0 \to 0$ & $0 \to 0$ & $0 \to 0$ & $0 \to 0$ \\
 12F & $12|2{+}6$ & $\infty$ & $1 \to 0$ & $0 \to 0$ & $0 \to 0$ & $0 \to 0$ \\
 12G & $12|2{+}2$ & $\infty$ & $1 \to 0$ & $0 \to 0$ & $0 \to 0$ & $0 \to 0$ \\
 12H & $12{+}12$ & $2 \to 0$ & $0 \to 0$ & $0 \to 0$ & $0 \to 0$ & $0 \to 0$ \\
 12I & $12$ & $\infty$ & $0 \to 0$ & $0 \to 0$ & $0 \to 0$ & $0 \to 0$ \\
 12J & $12|6$ & $\infty$ & $\infty$ & $0 \to 0$ & $0 \to 0$ & $0 \to 0$ \\
 13A & $13{+}$ & $2 \to 0$ & $1 \to 0$ & $0 \to 0$ & $0 \to 0$ & $0 \to 0$ \\
 13B & $13$ & $1 \to 0$ & $0 \to 0$ & $0 \to 0$ & $0 \to 0$ & $0 \to 0$ \\
 14A & $14{+}$ & $2 \to 0$ & $1 \to 0$ & $0 \to 0$ & $1 \to 1$ & $0 \to 0$ \\
 14B & $14{+}7$ & $2 \to 0$ & $1 \to 0$ & $0 \to 0$ & $0 \to 0$ & $0 \to 0$ \\
 14C & $14{+}14$ & $2 \to 0$ & $0 \to 0$ & $0 \to 0$ & $0 \to 0$ & $0 \to 0$ \\
 15A & $15{+}$ & $1 \to 0$ & $1 \to 0$ & $(1,2,3,5,5)$ & $0 \to 0$ & $0 \to 0$ \\
 15B & $15{+}5$ & $1 \to 0$ & $1 \to 0$ & $0 \to 0$ & $0 \to 0$ & $0 \to 0$ \\
 15C & $15{+}15$ & $0 \to 0$ & $1 \to 0$ & $0 \to 0$ & $0 \to 0$ & $0 \to 0$ \\
 15D & $15|3$ & $1 \to 0$ & $\infty$ & $0 \to 1,0$ & $0 \to 0$ & $0 \to 0$ \\
 16A & $16|2{+}$ & $\infty$ & $0,1 \to 0$ & $0 \to 1,0$ & $0 \to 0$ & $0 \to 0$ \\
 16B & $16$ & $\infty$ & $0 \to 0$ & $0 \to 0$ & $0 \to 0$ & $0 \to 0$ \\
 16C & $16{+}$ & $4,6 \to 3$ & $0 \to 0$ & $0 \to 0$ & $0 \to 0$ & $0 \to 0$ \\
 17A & $17{+}$ & $1 \to 0$ & $0,1 \to 0$ & $0 \to 0$ & $0 \to 0$ & $0 \to 0$ \\
 18A & $18{+}2$ & $0 \to 0$ & $\infty$ & $0 \to 0$ & $0 \to 0$ & $0 \to 0$ \\
 18B & $18{+}$ & $1 \to 0$ & $(3,5,9,9,11)$ & $0 \to 0$ & $0 \to 0$ & $0 \to 0$ \\
 18C & $18{+}9$ & $1 \to 0$ & $1 \to 0$ & $0 \to 0$ & $0 \to 0$ & $0 \to 0$ \\
 18D & $18$ & $0 \to 0$ & $\infty$ & $0 \to 0$ & $0 \to 0$ & $0 \to 0$ \\
 18E & $18{+}18$ & $0 \to 0$ & $1 \to 0$ & $0 \to 0$ & $0 \to 0$ & $0 \to 0$ \\
 19A & $19{+}$ & $1,2 \to 0$ & $1 \to 0$ & $0 \to 0$ & $0 \to 0$ & $0 \to 0$ \\
 20A & $20{+}$ & $3 \to 2,1,3,0$ & $0 \to 0$ & $0 \to 0$ & $0 \to 0$ & $0 \to 0$ \\
 20B & $20|2{+}$ & $\infty$ & $1 \to 0$ & $1 \to 1$ & $0 \to 0$ & $0 \to 0$ \\
 20C & $20{+}4$ & $1 \to 0$ & $0 \to 0$ & $0 \to 0$ & $0 \to 0$ & $0 \to 0$ \\
 20D & $20|2{+}5$ & $\infty$ & $0,1 \to 0$ & $0 \to 0$ & $0 \to 0$ & $0 \to 0$ \\
 20E & $20|2{+}10$ & $\infty$ & $1 \to 0$ & $0 \to 0$ & $0 \to 0$ & $0 \to 0$ \\
 20F & $20{+}20$ & $1 \to 0$ & $0 \to 0$ & $0 \to 0$ & $0 \to 0$ & $0 \to 0$ \\
 21A & $21{+}$ & $1 \to 0$ & $1 \to 0$ & $0 \to 0$ & $0 \to 0$ & $0 \to 0$ \\
 21B & $21{+}3$ & $0 \to 0$ & $0 \to 0$ & $0 \to 0$ & $0 \to 0$ & $0 \to 0$ \\
 21C & $21|3{+}$ & $0 \to 0$ & $\infty$ & $0 \to 1,0$ & $1 \to 1$ & $0 \to 0$ \\
 21D & $21{+}21$ & $1 \to 0$ & $0 \to 0$ & $0 \to 0$ & $0 \to 0$ & $0 \to 0$ \\
 22A & $22{+}$ & $1 \to 1,0$ & $0 \to 0$ & $0 \to 0$ & $0 \to 0$ & $0 \to 0$ \\
 22B & $22{+}11$ & $1 \to 1,0$ & $0 \to 0$ & $0 \to 0$ & $0 \to 0$ & $0 \to 0$ \\
 23AB & $23{+}$ & $0 \to 0$ & $0 \to 0$ & $0 \to 0$ & $0 \to 0$ & $0 \to 0$ \\
 24A & $24|2{+}$ & $\infty$ & $1 \to 0$ & $0 \to 0$ & $0 \to 0$ & $0 \to 0$ \\
 24B & $24{+}$ & $3 \to 1$ & $0 \to 0$ & $0 \to 0$ & $0 \to 0$ & $0 \to 0$ \\
 24C & $24{+}8$ & $1 \to 0$ & $0 \to 0$ & $0 \to 0$ & $0 \to 0$ & $0 \to 0$ \\
 24D & $24|2{+}3$ & $\infty$ & $0 \to 0$ & $0 \to 0$ & $0 \to 0$ & $0 \to 0$ \\
 24E & $24|6{+}$ & $\infty$ & $\infty$ & $0 \to 0$ & $0 \to 0$ & $0 \to 0$ \\
 24F & $24|4{+}6$ & $\infty$ & $1 \to 0$ & $(0,0,1,2,2)$ & $0 \to 0$ & $0 \to 0$ \\
 24G & $24|4{+}2$ & $\infty$ & $0 \to 1,0$ & $0 \to 0$ & $0 \to 0$ & $0 \to 0$ \\
 24H & $24|2{+}12$ & $\infty$ & $0 \to 0$ & $0 \to 0$ & $0 \to 0$ & $0 \to 0$ \\
 24I & $24{+}24$ & $1 \to 0$ & $0 \to 0$ & $0 \to 0$ & $0 \to 0$ & $0 \to 0$ \\
 24J & $24|12$ & $\infty$ & $\infty$ & $0 \to 0$ & $0 \to 0$ & $0 \to 0$ \\
 25A & $25{+}$ & $0 \to 0$ & $0 \to 0$ & $2 \to 1$ & $0 \to 0$ & $0 \to 0$ \\
 26A & $26{+}$ & $2 \to 0$ & $0,1 \to 0$ & $0 \to 0$ & $0 \to 0$ & $0 \to 0$ \\
 26B & $26{+}26$ & $1 \to 0$ & $0 \to 0$ & $0 \to 0$ & $0 \to 0$ & $0 \to 0$ \\
 27AB & $27{+}$ & $0 \to 0$ & $2 \to 2$ & $0 \to 0$ & $0 \to 0$ & $0 \to 0$ \\
 28A & $28|2{+}$ & $\infty$ & $0,1 \to 0$ & $0 \to 0$ & $1 \to 1$ & $0 \to 0$ \\
 28B & $28{+}$ & $2 \to 0$ & $1 \to 0$ & $0 \to 0$ & $0 \to 0$ & $0 \to 0$ \\
 28C & $28{+}7$ & $\infty$ & $0 \to 0$ & $0 \to 0$ & $0 \to 0$ & $0 \to 0$ \\
 28D & $28|2{+}14$ & $\infty$ & $0 \to 0$ & $0 \to 0$ & $0 \to 0$ & $0 \to 0$ \\
 29A & $29{+}$ & $1 \to 0$ & $0 \to 0$ & $0 \to 0$ & $0 \to 0$ & $0 \to 0$ \\
 30A & $30{+}6,10,15$ & $0 \to 0$ & $1 \to 0$ & $0 \to 0$ & $0 \to 0$ & $0 \to 0$ \\
 30B & $30{+}$ & $1 \to 0$ & $1 \to 0$ & $1 \to 1$ & $0 \to 0$ & $0 \to 0$ \\
 30C & $30{+}3,5,15$ & $1 \to 0$ & $0 \to 0$ & $0 \to 0$ & $0 \to 0$ & $0 \to 0$ \\
 30D & $30{+}5,6,30$ & $1 \to 0$ & $0 \to 0$ & $0 \to 0$ & $0 \to 0$ & $0 \to 0$ \\
 30E & $30|3{+}10$ & $1 \to 0$ & $\infty$ & $0 \to 1,0$ & $0 \to 0$ & $0 \to 0$ \\
 30F & $30{+}2,15,30$ & $0 \to 0$ & $0 \to 0$ & $0 \to 0$ & $0 \to 0$ & $0 \to 0$ \\
 30G & $30{+}15$ & $0 \to 0$ & $0 \to 0$ & $0 \to 0$ & $0 \to 0$ & $0 \to 0$ \\
 31AB & $31{+}$ & $0 \to 0$ & $1 \to 0$ & $0 \to 0$ & $0 \to 0$ & $0 \to 0$ \\
 32A & $32{+}$ & $2,3,5 \to 3$ & $0 \to 0$ & $0 \to 0$ & $0 \to 0$ & $0 \to 0$ \\
 32B & $32|2{+}$ & $\infty$ & $0 \to 0$ & $0 \to 0$ & $0 \to 0$ & $0 \to 0$ \\
 33A & $33{+}11$ & $0 \to 0$ & $0 \to 0$ & $0 \to 0$ & $0 \to 0$ & $0 \to 0$ \\
 33B & $33{+}$ & $1 \to 0$ & $0 \to 0$ & $0 \to 0$ & $0 \to 0$ & $0 \to 0$ \\
 34A & $34{+}$ & $1 \to 0$ & $0,1 \to 0$ & $0 \to 0$ & $0 \to 0$ & $0 \to 0$ \\
 35A & $35{+}$ & $1 \to 0$ & $0 \to 0$ & $1 \to 1$ & $0 \to 0$ & $0 \to 0$ \\
 35B & $35{+}35$ & $0 \to 0$ & $0 \to 0$ & $0 \to 0$ & $0 \to 0$ & $0 \to 0$ \\
 36A & $36{+}$ & $1 \to 0$ & $1 \to 0$ & $0 \to 0$ & $0 \to 0$ & $0 \to 0$ \\
 36B & $36{+}4$ & $0 \to 0$ & $\infty$ & $0 \to 0$ & $0 \to 0$ & $0 \to 0$ \\
 36C & $36|2{+}$ & $\infty$ & $1 \to 0$ & $0 \to 0$ & $0 \to 0$ & $0 \to 0$ \\
 36D & $36{+}36$ & $0 \to 0$ & $0 \to 0$ & $0 \to 0$ & $0 \to 0$ & $0 \to 0$ \\
 38A & $38{+}$ & $1,2 \to 0$ & $0 \to 0$ & $0 \to 0$ & $0 \to 0$ & $0 \to 0$ \\
 39A & $39{+}$ & $0 \to 0$ & $1 \to 0$ & $0 \to 0$ & $0 \to 0$ & $0 \to 0$ \\
 39B & $39|3{+}$ & $0 \to 0$ & $\infty$ & $0 \to 0$ & $0 \to 0$ & $0 \to 0$ \\
 39CD & $39{+}39$ & $0 \to 0$ & $0 \to 0$ & $0 \to 0$ & $0 \to 0$ & $0 \to 0$ \\
 40A & $40|4{+}$ & $\infty$ & $0,1 \to 0$ & $(1,2,3,5,5)$ & $0 \to 0$ & $0 \to 0$ \\
 40B & $40|2{+}$ & $\infty$ & $0,1 \to 0$ & $0 \to 0$ & $0 \to 0$ & $0 \to 0$ \\
 40CD & $40|2{+}20$ & $\infty$ & $0 \to 0$ & $0 \to 0$ & $0 \to 0$ & $0 \to 0$ \\
 41A & $41{+}$ & $1 \to 0$ & $0 \to 0$ & $0 \to 0$ & $0 \to 0$ & $0 \to 0$ \\
 42A & $42{+}$ & $1 \to 0$ & $1 \to 0$ & $0 \to 0$ & $0 \to 0$ & $0 \to 0$ \\
 42B & $42{+}6,14,21$ & $1 \to 0$ & $0 \to 0$ & $0 \to 0$ & $0 \to 0$ & $0 \to 0$ \\
 42C & $42|3{+}7$ & $0 \to 0$ & $\infty$ & $0 \to 0$ & $0 \to 0$ & $0 \to 0$ \\
 42D & $42{+}3,14,42$ & $0 \to 0$ & $0 \to 0$ & $0 \to 0$ & $0 \to 0$ & $0 \to 0$ \\
 44AB & $44{+}$ & $1 \to 1,0$ & $0 \to 0$ & $0 \to 0$ & $0 \to 0$ & $0 \to 0$ \\
 45A & $45{+}$ & $0 \to 0$ & $1 \to 0$ & $0 \to 0$ & $0 \to 0$ & $0 \to 0$ \\
 46AB & $46{+}23$ & $0 \to 0$ & $0 \to 0$ & $0 \to 0$ & $0 \to 0$ & $0 \to 0$ \\
 46CD & $46{+}$ & $0 \to 0$ & $0 \to 0$ & $0 \to 0$ & $0 \to 0$ & $0 \to 0$ \\
 47AB & $47{+}$ & $0 \to 0$ & $0 \to 0$ & $0 \to 0$ & $0 \to 0$ & $0 \to 0$ \\
 48A & $48|2{+}$ & $\infty$ & $0,1 \to 0$ & $0 \to 0$ & $0 \to 0$ & $0 \to 0$ \\
 50A & $50{+}$ & $0 \to 0$ & $0 \to 0$ & $(1,2,3,5,5)$ & $0 \to 0$ & $0 \to 0$ \\
 51A & $51{+}$ & $1 \to 0$ & $0,1 \to 0$ & $0 \to 0$ & $0 \to 0$ & $0 \to 0$ \\
 52A & $52|2{+}$ & $\infty$ & $0 \to 0$ & $0 \to 0$ & $0 \to 0$ & $0 \to 0$ \\
 52B & $52|2{+}26$ & $\infty$ & $0 \to 0$ & $0 \to 0$ & $0 \to 0$ & $0 \to 0$ \\
 54A & $54{+}$ & $0 \to 0$ & $(1,2,4,8,8)$ & $0 \to 0$ & $0 \to 0$ & $0 \to 0$ \\
 55A & $55{+}$ & $0 \to 0$ & $0 \to 0$ & $0 \to 0$ & $0 \to 0$ & $0 \to 0$ \\
 56A & $56{+}$ & $1 \to 0$ & $0 \to 0$ & $0 \to 0$ & $0 \to 0$ & $0 \to 0$ \\
 56BC & $56|4{+}14$ & $\infty$ & $0 \to 0$ & $0 \to 0$ & $0 \to 0$ & $0 \to 0$ \\
 57A & $57|3{+}$ & $0 \to 0$ & $\infty$ & $0 \to 0$ & $0 \to 0$ & $0 \to 0$ \\
 59AB & $59{+}$ & $0 \to 0$ & $0 \to 0$ & $0 \to 0$ & $0 \to 0$ & $0 \to 0$ \\
 60A & $60|2{+}$ & $\infty$ & $1 \to 0$ & $0 \to 0$ & $0 \to 0$ & $0 \to 0$ \\
 60B & $60{+}$ & $1 \to 0$ & $0 \to 0$ & $0 \to 0$ & $0 \to 0$ & $0 \to 0$ \\
 60C & $60{+}4,15,60$ & $0 \to 0$ & $0 \to 0$ & $0 \to 0$ & $0 \to 0$ & $0 \to 0$ \\
 60D & $60{+}12,15,20$ & $0 \to 0$ & $0 \to 0$ & $0 \to 0$ & $0 \to 0$ & $0 \to 0$ \\
 60E & $60|2{+}5,6,30$ & $\infty$ & $0,1 \to 0$ & $0 \to 0$ & $0 \to 0$ & $0 \to 0$ \\
 60F & $60|6{+}10$ & $\infty$ & $\infty$ & $0 \to 0$ & $0 \to 0$ & $0 \to 0$ \\
 62AB & $62{+}$ & $0 \to 0$ & $0,1 \to 0$ & $0 \to 0$ & $0 \to 0$ & $0 \to 0$ \\
 66A & $66{+}$ & $1 \to 0$ & $0 \to 0$ & $0 \to 0$ & $0 \to 0$ & $0 \to 0$ \\
 66B & $66{+}6,11,66$ & $0 \to 0$ & $0 \to 0$ & $0 \to 0$ & $0 \to 0$ & $0 \to 0$ \\
 68A & $68|2{+}$ & $\infty$ & $0 \to 0$ & $0 \to 0$ & $0 \to 0$ & $0 \to 0$ \\
 69AB & $69{+}$ & $0 \to 0$ & $0 \to 0$ & $0 \to 0$ & $0 \to 0$ & $0 \to 0$ \\
 70A & $70{+}$ & $1 \to 0$ & $0 \to 0$ & $0 \to 0$ & $0 \to 0$ & $0 \to 0$ \\
 70B & $70{+}10,14,35$ & $0 \to 0$ & $0 \to 0$ & $0 \to 0$ & $0 \to 0$ & $0 \to 0$ \\
 71AB & $71{+}$ & $0 \to 0$ & $0 \to 0$ & $0 \to 0$ & $0 \to 0$ & $0 \to 0$ \\
 78A & $78{+}$ & $0 \to 0$ & $0,1 \to 0$ & $0 \to 0$ & $0 \to 0$ & $0 \to 0$ \\
 78BC & $78{+}6,26,39$ & $0 \to 0$ & $0 \to 0$ & $0 \to 0$ & $0 \to 0$ & $0 \to 0$ \\
 84A & $84|2{+}$ & $\infty$ & $0,1 \to 0$ & $0 \to 0$ & $0 \to 0$ & $0 \to 0$ \\
 84B & $84|2{+}6,14,21$ & $\infty$ & $0 \to 0$ & $0 \to 0$ & $0 \to 0$ & $0 \to 0$ \\
 84C & $84|3{+}$ & $0 \to 0$ & $\infty$ & $0 \to 0$ & $0 \to 0$ & $0 \to 0$ \\
 87AB & $87{+}$ & $0 \to 0$ & $0 \to 0$ & $0 \to 0$ & $0 \to 0$ & $0 \to 0$ \\
 88AB & $88|2{+}$ & $\infty$ & $0 \to 0$ & $0 \to 0$ & $0 \to 0$ & $0 \to 0$ \\
 92AB & $92{+}$ & $0 \to 0$ & $0 \to 0$ & $0 \to 0$ & $0 \to 0$ & $0 \to 0$ \\
 93AB & $93|3{+}$ & $0 \to 0$ & $\infty$ & $0 \to 0$ & $0 \to 0$ & $0 \to 0$ \\
 94AB & $94{+}$ & $0 \to 0$ & $0 \to 0$ & $0 \to 0$ & $0 \to 0$ & $0 \to 0$ \\
 95AB & $95{+}$ & $0 \to 0$ & $0 \to 0$ & $0 \to 0$ & $0 \to 0$ & $0 \to 0$ \\
 104AB & $104|4{+}$ & $\infty$ & $0 \to 0$ & $0 \to 0$ & $0 \to 0$ & $0 \to 0$ \\
 105A & $105{+}$ & $0 \to 0$ & $0 \to 0$ & $0 \to 0$ & $0 \to 0$ & $0 \to 0$ \\
 110A & $110{+}$ & $0 \to 0$ & $0 \to 0$ & $0 \to 0$ & $0 \to 0$ & $0 \to 0$ \\
 119AB & $119{+}$ & $0 \to 0$ & $0 \to 0$ & $0 \to 0$ & $0 \to 0$ & $0 \to 0$
\label{tbl:full_annihilation}
\end{longtable}
\end{scriptsize}

\section{Power maps}\label{apndx:web}
For each $p \in \{3, 5, 7, 11\}$ we record here the structure of the collection of groups whose Hauptmoduln are $p$-adically annihilated by $U_p$ (the case $p = 2$ appears as Fig.~\ref{fig:2web} in the introduction). In each diagram below, we write the groups $\G$ such that $\mt_\G$ is $p$-adically annihilated by~$U_p$ but $\mt_\G | U_p \ne 0$, and all of the powers of such groups. Solid lines indicate power maps, groups in white boxes satisfy $\mt_\G | U_p = 0$, and groups in black boxes are not $p$-adically annihilated by $U_p$ at all.

\begin{figure}[h!] \centering
 \begin{tikzpicture}[scale=1.1]
 \inftyzero
 \small{
 \node at ( 0, 0) (1) {$1$};
 \node at (-2, -1) (3) {$3$};
 \node at ( 0, -1) (2+) {$2{+}$};
 \node at ( 2, -1) (3+) {$3{+}$};
 \node[infty] at (-3, -2) (9) {$9$};
 \node at (-2, -2) (9+) {$9{+}$};
 \node at (-1, -2) (6+2) {$6{+}2$};
 \node[zero] at ( 0, -2) (4I2+) {$4|2{+}$};
 \node at ( 1, -2) (6+) {$6{+}$};
 \node at (-3, -3) (27+) {$27{+}$};
 \node[infty] at (-2, -3) (18+2) {$18{+}2$};
 \node at (-1, -3) (18+) {$18{+}$};
 \node[zero] at ( 0, -3) (12I2+2) {$12|2{+}2$};
 \node at ( 1, -3) (8I4+) {$8|4{+}$};
 \node at (-3, -4) (54+) {$54{+}$};
 \node at ( 0, -4) (24I4+2) {$24|4{+}2$};

 \draw (1) -- (3) -- (9) -- (27+) -- (54+);
 \draw (9) -- (18+2) -- (54+);
 \draw (3) -- (9+) -- (18+);
 \draw (3) -- (6+2) -- (18+2);
 \draw (1) -- (2+) -- (6+2) -- (18+);
 \draw (6+2) -- (12I2+2) -- (24I4+2);
 \draw (2+) -- (4I2+) -- (12I2+2);
 \draw (4I2+) -- (8I4+) -- (24I4+2);
 \draw (1) -- (3+) -- (6+) -- (2+);
 }
 \end{tikzpicture}

 \caption{Power maps for $3$-adically annihilated Hauptmoduln.} \label{fig:3web}
\end{figure}

\begin{figure}[h!] \centering
 \begin{tikzpicture}[scale=.94]
 \inftyzero
 \small{
 \node at ( 0, 0) (1) {$1$};
 \node at (-5, -1) (3+) {$3{+}$};
 \node at (-4, -1) (5+) {$5{+}$};
 \node at (-2, -1) (2+) {$2{+}$};
 \node at (-1, -1) (7+) {$7{+}$};
 \node at ( 0, -1) (5) {$5$};
 \node at ( 2, -1) (3I3) {$3|3$};
 \node[zero] at (6, -1) (2) {$2$};
 \node[zero] at (9, -1) (3) {$3$};
 \node at (-6, -2) (6+) {$6{+}$};
 \node at (-5, -2) (15+) {$15{+}$};
 \node at (-4, -2) (10+) {$10{+}$};
 \node at (-3, -2) (4I2+) {$4|2{+}$};
 \node at (-2, -2) (35+) {$35{+}$};
 \node at (-1, -2) (10+2) {$10{+}2$};
 \node at ( 0, -2) (25+) {$25{+}$};
 \node at ( 2, -2) (15I3) {$15|3$};
 \node at ( 1, -2) (21I3+) {$21|3{+}$};
 \node at ( 3, -2) (6I3) {$6|3$};
 \node[zero] at (4.5, -2) (10+10) {$10{+}10$};
 \node[zero] at (6, -2) (4+) {$4{+}$};
 \node[zero] at (7, -2) (4) {$4$};
 \node[zero] at (8, -2) (4I2) {$4|2$};
 \node[zero] at (9, -2) (6+6) {$6{+}6$};
 \node at (-5, -3) (30+) {$30{+}$};
 \node at (-4, -3) (20I2+) {$20|2{+}$};
 \node at (-3, -3) (8I4+) {$8|4{+}$};
 \node at ( 0, -3) (50+) {$50{+}$};
 \node at ( 3, -3) (30I3+10) {$30|3{+}10$};
 \node at ( 5, -3) (12I3+) {$12|3{+}$};
 \node[zero] at (7, -3) (8+) {$8{+}$};
 \node at ( 8, -3) (8I4) {$8|4$};
 \node[zero] at (9, -3) (12I2+6) {$12|2{+}6$};
 \node at (-4, -4) (40I4+) {$40|4{+}$};
 \node at ( 7, -4) (16I2+) {$16|2{+}$};
 \node at ( 8.5, -4.5) (24I4+6) {$24|4{+}6$};

 \draw (1) -- (3+) -- (6+) -- (30+) -- (15+) -- (3+);
 \draw (15+) -- (5+) -- (1) -- (2+) -- (6+);
 \draw (5+) -- (10+) -- (30+);
 \draw (4I2+) -- (2+) -- (10+) -- (20I2+) -- (40I4+) -- (8I4+) -- (4I2+) -- (20I2+);
 \draw (5+) -- (35+) -- (7+) -- (1) -- (5) -- (25+) -- (50+) -- (10+2) -- (2+);
 \draw (10+2) -- (5) -- (15I3) -- (3I3) -- (21I3+) -- (7+);
 \draw (15I3) -- (30I3+10) -- (6I3) -- (3I3) -- (1) -- (2) -- (6I3) -- (12I3+) -- (4+) -- (2) -- (10+10) -- (30I3+10);
 \draw (16I2+) -- (8+) -- (4) -- (2) -- (4I2) -- (8I4) -- (24I4+6) -- (12I2+6) -- (4I2);
 \draw (1) -- (3) -- (6+6) -- (12I2+6);
 \draw (2) -- (6+6);
 }
 \end{tikzpicture}

 \caption{Power maps for $5$-adically annihilated Hauptmoduln.} \label{fig:5web}
\end{figure}

\begin{figure}[h!]\centering
\begin{minipage}[b]{.45\textwidth} \centering
 \begin{tikzpicture}[scale=.75]
 \inftyzero
 \small{
 \node at ( 0, 0) (1) {$1$};
 \node at (-3, -2) (3I3) {$3|3$};
 \node at (-1, -2) (7+) {$7{+}$};
 \node at ( 1, -2) (2+) {$2{+}$};
 \node at ( 3, -2) (7) {$7$};
 \node at (-3, -4) (21I3+) {$21|3{+}$};
 \node at (-1, -4) (14+) {$14{+}$};
 \node at ( 1, -4) (4I2+) {$4|2{+}$};
 \node at (-1, -6) (28I2+) {$28|2{+}$};
 \node at ( 1, -6) (8I4+) {$8|4{+}$};

 \draw (7) -- (1) -- (3I3) -- (21I3+) -- (7+) -- (1) -- (2+) -- (14+) -- (7+);
 \draw (14+) -- (28I2+) -- (4I2+) -- (2+);
 \draw (4I2+) -- (8I4+);
 }
 \end{tikzpicture}

 \caption{Power maps for $7$-adically annihilated Hauptmoduln}
\end{minipage}\quad
\begin{minipage}[b]{.45\textwidth}
 \centering
 \begin{tikzpicture}[scale=1.5]
 \inftyzero
 \small{
 \node at ( 0, 0) (1) {$1$};
 \node at (-1, -1) (3I3) {$3|3$};
 \node[zero] at ( 0, -1) (2+) {$2{+}$};
 \node at ( 1, -1) (11+) {$11{+}$};
 \node at ( 0, -2) (4I2+) {$4|2{+}$};

 \draw (3I3) -- (1) -- (2+) -- (4I2+);
 \draw (1) -- (11+);
 }
 \end{tikzpicture}

 \caption{Power maps for $11$-adically annihilated Hauptmoduln}
\end{minipage}
\label{fig:7-11web}
\end{figure}

\subsection*{Acknowledgements}
We wish to thank Ken Ono, John Duncan, and Larry Rolen for their support and guidance, as well as Frank Calegari for useful information regarding lifts of characteristic $p$ modular forms for the primes $p = 2,3$. We thank our referees for helpful comments. We also thank Emory University, Princeton University, the Asa Griggs Candler Fund, and NSF grant DMS-1557960.

\pdfbookmark[1]{References}{ref}
\LastPageEnding

\end{document}